\date{}
\theoremstyle{plain}
\newtheorem{thm}{Theorem}[section]
\newtheorem*{thm*}{Theorem}
\newtheorem{teor*}{Theorem}
\newtheorem{cor}[thm]{Corollary}
\newtheorem{lem}[thm]{Lemma}
\newtheorem{prop}[thm]{Proposition}
\theoremstyle{definition}
\newtheorem{defn}{Definition}[section]
\theoremstyle{remark}
\newtheorem{rem}{Remark}
\numberwithin{equation}{section}
\DeclareMathOperator{\sgn}{sgn}
\title{Attainable profiles for conservation laws\\ 
with flux function spatially discontinuous at a single point}
\author{Fabio Ancona$^1$
	\and Maria Teresa Chiri$^2$
	}
\date{%
	\small\textit{$^1$Dipartimento di Matematica "Tullio Levi-Civita", Universit\`a di Padova, Italy \\(ancona@math.unipd.it)\\%
	$^2$Department of Mathematics, Penn State University, University Park, PA, USA \\	(mxc6028@psu.edu)}\\[2ex]%
	\today
}
\begin{document}
\maketitle
\begin{abstract}
Consider a scalar  conservation law with discontinuous flux
\begin{equation*}\tag{1}
\quad u_{t}+f(x,u)_{x}=0, 
\qquad f(x,u)= \begin{cases}
f_l(u)\ &\text{if}\ x<0,\\
f_r(u)\ & \text{if} \ x>0,
\end{cases}
\end{equation*}
where $u=u(x,t)$ is the state variable and $f_{l}$, $f_{r}$ are strictly convex maps. We study the Cauchy problem for (1) 
from the point of view of control theory regarding the initial datum as a control. Letting $u(x,t)\doteq \mathcal{S}_t^{AB} \overline u(x)$ denote the solution of the 
Cauchy problem for (1), with initial datum $u(\cdot,0)=\overline u$, that satisfy at $x=0$ the interface entropy condition associated to a connection $(A,B)$ (see~\cite{MR2195983}), we analyze the family of 
profiles that can be attained by (1) at a given time $T>0$:
\begin{equation*}
\mathcal{A}^{AB}(T)=\left\{\mathcal{S}_T^{AB} \,\overline u : \ \overline u\in{\bf L}^\infty(\mathbb{R})\right\}.
\end{equation*}
We provide a full characterization of $\mathcal{A}^{AB}(T)$ as a class of functions in $BV_{loc}(\mathbb{R}\setminus\{0\})$ that satisfy suitable Ole\v{\i}nik-type inequalities,
and that admit one-sided limits at $x=0$ which satisfy specific conditions related to the interface entropy criterium.
Relying on this characterisation, we establish the ${\bf L^1}_{loc}$-compactness of the set of attainable
profiles when the initial data $\overline u$ vary in a given class of uniformly bounded functions, taking values in closed convex sets.
We also discuss some applications of these results to optimization problems arising in 
porous media flow models for oil recovery and in traffic flow.
\end{abstract}
\section*{Introduction}

Consider the  Cauchy problem for the scalar conservation law in one space dimension
 \begin{align}
 u_{t}+f(x,u)_{x} & =0,\qquad x\in \mathbb{R},\quad t\geq 0, \label{eq1}\\
 u|_{t=0} & =\overline u,\qquad x\in \mathbb{R}, \label{datum}
 \end{align}
where $u=u(x,t)$ is the state variable, and the flux $f(x,u)$ is a discontinuous function given by
 \begin{equation}
 \label{flux}
 f(x,u)=
 \begin{cases}
 f_{l}(u)&\quad\text{if}\qquad x<0\,,
 \\
 \noalign{\smallskip}
 f_{r}(u)&\quad\text{if}\qquad x>0\,, 
 \end{cases}
 \end{equation}
with $f_{l}$ and $f_{r}$ smooth, strictly convex maps.
The equation~\eqref{eq1} is usually supplemented with appropriate coupling conditions
imposed at the point of discontinuity of the flux 
so to guarantee uniqueness
of solutions to the Cauchy problem~\eqref{eq1}-\eqref{datum}.
Namely, 
the traces 
\begin{equation}
\label{traces}
u_{l}(t)=\lim_{t\rightarrow 0-}{u(x,t)},\qquad u_{r}(t)= \lim_{t\rightarrow 0+}{u(x,t)},
\end{equation}
of a weak distributional solution of~\eqref{eq1}, \eqref{flux}, must satisfy the Rankine-Hugoniot condition 
\begin{equation}
\label{RH-i}
 f_{l}(u_l(t))=f_{r}(u_r(t))
 \qquad\text{for a.e.} \quad t>0\,,
 \end{equation}
at the interface $x=0$.
Moreover, 
various type of  admissibility conditions (interface entropy conditions)
imposed on $u_{l,r}$ have been introduced in the literature, 
according with different modelling assumptions (see~\cite{MR3416038,MR3369104}).
Such conditions lead to different solutions of the Cauchy problem~\eqref{eq1}-\eqref{datum},
which are appropriate for the particular physical phenomena modelled by~\eqref{eq1}.
Alternatively, one can equivalently characterize the admissible solutions in terms 
of  Kru\v{z}kov-type (possibly singular) entropy inequalities satisfied up-to-the flux-discontinuity
interface (cfr.~\cite{MR2024741}),
or using  extended families of
entropy inequalities associated to the so called {\it partially adapted entropies} (see~\cite{MR3416038,andreianov:hal-02197482,MR2132749,MR2505870}).

Starting with the works by Isacson \& Temple~\cite{MR861520}  and by Risebro and collaborators~\cite{MR1109304,MR1158825,MR1710013},
conservation laws with discontinuous flux has been an intense subject of research
in the last three decades
(e.g. see~\cite{MR2807133,MR3870565}
and references therein). 
Solutions of~\eqref{eq1}, \eqref{flux}, satisfying the above mentioned admissibility criteria, can be obtained 
as limit of approximations
constructed 
by regularization of the flux~\cite{MR2209759,MR1710013,MR1912069}, by wave front-tracking~\cite{MR2291816,MR1158825},
by Godunov method~\cite{MR2195983,MR3732666} 
and several other numerical schemes~\cite{MR2505870,MR2086124,MR1961002}
or by vanishing viscosity~\cite{MR2670658,MR3870565}.
In particular, in~\cite{MR2807133} it was set up a general framework
that encompasses all the notions of admissible solutions to the Cauchy problem~\eqref{eq1}-\eqref{datum}
which lead to the existence of an ${\bf L^1}$-contractive semigroup.

In this paper we study the system~\eqref{eq1}-\eqref{datum} from the point of view of control theory,
regarding the initial data $u_0$ as a control.
Namely, 
we provide a characterization of the space-profile configurations that can be attained at any fixed time $T>0$\,:
\begin{equation*}
\mathcal{A}(T
)=\big\lbrace u(\cdot,T): u\text{ is an admissible solution of (\ref{eq1}),\eqref{flux}-(\ref{datum}) with } u_{0}\in
\mathbf{L}^{\infty}(\mathbb{R})
\big\rbrace.
\end{equation*} 
Here, $u$ is a solution of~\eqref{eq1}-\eqref{datum} satisfying an interface entropy condition 
associated to a so-called {\it interface-connection} $(A, B)$~\cite{MR2195983,MR2505870}.
A connection $(A, B)$ is a pair of states connected by a stationary weak solution
of (\ref{eq1}),\eqref{flux},
taking values $A$ for $x<0$, and $B$ for $x>0$, which has characteristics diverging
from (or parallel to) the flux-discontinuity interface $x=0$. Such a solution characterize the possible {\it undercompressive}
(or {\it marginally undercompressive}) shock waves exhibited by admissible solutions of (\ref{eq1}),\eqref{flux}
that satisfy an interface entropy condition involving the connection $(A, B)$
(cfr.\cite{MR2195983,MR2505870}). 
The reason for choosing this type of admissibility conditions for solutions of (\ref{eq1}),\eqref{flux} 
is twofold. On one hand,
it is consistent with the models of two-phase flows in heterogeneous porous medium~\cite{MR2195983} 
or of traffic flow on roads with variable surface conditions~\cite{MR898784}.
On the other hand, it allows to treat any connection $(A,B)$ as a pair of control parameters
as well. 

We show that any element in $\mathcal{A}(T)$  belongs to a class of functions in $BV_{loc}(\mathbb{R}\setminus\{0\})$
(with locally bounded variation on $\mathbb{R}\setminus\{0\}$),
which: 
\vspace{-5pt}
\begin{itemize}
\item[-] satisfy suitable Ole\v{\i}nik-type inequalities involving the first and second derivatives
of the maps~$f_l, f_r$;\\
\vspace{-18pt}
\item[-] admit one-sided limits at $x=0$ which satisfy specific conditions related to the interface entropy criterium
of the $(A, B)$-connection.
\end{itemize}
\vspace{-7pt}
Viceversa, we establish  an exact-time controllability result, i.e. we prove that, 
for any target function $\omega$ of the aforementioned class, there exist an initial datum $\overline u$ 
and a connection $(A, B)$
that steer the system~\eqref{eq1}-\eqref{datum} to~$\omega$ at a given time~$T$.
These results extend to the spatially-discontinuous setting the characterization of the attainable 
profiles established in~\cite{MR1616586,MR1663615,MR1612027} for conservation laws with convex flux depending only on the state variable.
Such results 
are obtained 
exploiting, as in~\cite{MR1616586}, the theory of generalized characteristics,
which was developed by Dafermos~\cite{MR0457947}
for conservation laws with convex flux (in the state variable) depending smoothly on the space variable.
A detailed analysis of the structure of admissible solutions for a 
given connection (cfr.~Proposition~\ref{Norarefactions} and Remark~\ref{charact-lrtraces-admissible}) is also fundamental to derive a full characterization of the attainable profiles.

Hyperbolic partial differential equations with discontinuous coefficients 
arise in 
many different applications in physics and engineering including: two-phase flow models in porous media
with changing rock types (for oil reservoir simulation)~\cite{MR1158825,GR93}; slow erosion granular flow models~\cite{MR3384834};
clarifier-thickener problems of continuous sedimentation (in waste-water treatment plants)~\cite{MR2136036,MR1381652};
population-balance models of steel ball wear in grinding mills~\cite{MR2133239};
ion etching in semiconductor industry~\cite{MR948078};
 traffic flow models
 with roads of varying amplitudes or surface conditions
 ~\cite{MR898784};
Saint Venant models of blood flow in endovascular treatments~\cite{MR1901663,Ca};
radar shape-from-shading models~\cite{MR1912069}.
This kind of equations appear also in the analysis of inverse problems~\cite{MR2396486,MR1680830} 
or of optimal control problems~\cite{MR3238137} 
for conservation laws with smooth flux,
where one 
needs to deal with the 
backward adjoint transport equation 
with discontinuous coefficients, which depend on the (possibly discontinuous) solution
of the conservation law. 
Moreover, conservation laws with discontinuous flux 
arise also as a reformulation of
balance laws~\cite{MR2396489} or of triangular systems of conservation laws~\cite{MR3870565,MR2396488},
in order to design efficient numerical schemes or to analyse their well-posedness.
Finally, we observe that such a class of PDEs share fundamental features 
of conservation laws evolving on simple networks composed by a number of edges
connected together by a junction~\cite{MR3553143,MR2291816}, 
which is a topic 
attracting a vast interest in the last twenty-five years
for the wide range of applications~\cite{MR3200227}.

Despite a large amount of literature on the theoretical and numerical aspects of conservation
laws with discontinuous flux produced in the last three decades, almost no investigation of 
control issues for such a class of PDEs has been performed so far.
The goal of the present paper is to provide a first step toward the analysis of controllability
properties of these type of equations. 
Having in mind applications to optimization problems,
we rely on the characterization of the attainable profiles to establish  compactness in the $\mathbf{L}^1$-topology
of the attainable set in connection with 
classes of uniformly bounded initial data taking values in closed convex sets. 
We then apply these results 
to two classes of optimization problems for porous media flow in oil recovery and for traffic flow, where one 
is interested in: 
\vspace{-5pt}
\begin{itemize}
\item[-] minimising the distance from a target configuration (for both models)
 or  the fuel consumption
in a given road segment (for the latter model);\\
\vspace{-18pt}
\item[-] maximising the net present value of the waterflooding process (in the first model). 
\end{itemize}
We point out that a further	 step in the research direction pursued in this paper is the characterization of the traces of admissible solutions
at the the flux-discontinuity interface as well as the analysis of the reachable set when one 
fixes the initial data and considers such traces as control parameters (cfr.~\cite{MR3857883} within the network setting), which is the object of the forthcoming paper~\cite{AC1}.

The paper is organized in the following way. In Section 1 we recall the definition of interface 
entropy condition relative to an interface connection $(A, B)$, and the corresponding definition of
$AB$-entropy solution. We also review the well-posedness theory of $\mathbf{L}^1$-contractive
semigroups for this particular class of entropy admissible solutions.
Section 2 collects the statements of the main results on the full description of the set of attainable profiles and their topological properties. In Section 3 we establish a preliminary lemma
concerning the
structure of $AB$-entropy solutions.
The proofs of the characterization of the attainable set and of its compactness is provided
in Section 4 and Section 5, respectively.
Finally, in Section 6 we discuss two applications arising in 
traffic flow models, which lead to
variational problems
with cost functionals depending on the profile of the solutions,
where we regard as control parameters both the initial data
and the connection states. 

\section{Preliminaries and setting of the problem}
\label{sec-prelim}

Consider the scalar conservation law $(\ref{eq1})$ with flux given by $(\ref{flux})$,
and assume that $f_l, f_r$ 
coincide at two points
of their domain which, up to a reparametrization of the unknown variable, we may suppose to be $u=0$ and $u=1$. 
Observe that, by strict convexity, $f_{l}$ $f_{r}$ admit a unique point of minimum 
which we call, respectively, $\theta_{l}$ and $\theta_{r}$.
Hence, we shall make the following standing hypotheses on the flux $f$ in~\eqref{flux}:
\begin{enumerate}
\item[{\bf H1)}] $f_l, f_r: \mathbb{R}
\rightarrow \mathbb{R}$ 
are 
twice continuously differentiable, (uniformly) strictly convex maps
$$\min\big\{f''_{l}(u)\,\, f''_r(u)\big\}\geq c>0\qquad \forall~u\in \mathbb{R};$$
\item[{\bf H2)}] $f_l(0)=f_r(0)$, \, $f_l(1)=f_r(1)$;
\item[{\bf H3)}] 
$\theta_{l}\geq 0, \ \theta_r\leq 1$.
\end{enumerate}

We recall that, regardless of how smooth the initial data are, 
nonlinear conservation laws as~\eqref{eq1}, \eqref{flux} do not
posses in general classical solutions globally defined 
in time, even when $f_l=f_r$, since they can develop discontinuities (shocks) 
in finite time. Hence, it is natural to consider 
weak solutions in the sense of distributions  that, for sake of uniqueness, satisfy 
the classical Kru\v{z}kov entropy inequalities
away from the point of the flux discontinuity, and a further interface entropy condition
at the flux-discontinuity interface. 
As observed in the introduction, for modellization and control treatment reasons,
we shall employ an admissibility condition involving the so-called interface connection
introduced in~\cite{MR2195983},
which can be equivalently formulated in terms of an interface entropy condition
or of extended entropy inequalities adapted to the particular connection taken into account
(cfr.~\cite{MR2195983,MR2807133,MR2505870}).

\begin{defn}
\label{int-conn-def}
{\bf (Interface Connection)} Let $(A,B)\in\mathbb{R}^{2}$. Then $(A,B)$ is called a connection (Fig. 1) if it satisfies:
\begin{itemize}
\item[(i)] $f_{l}(A)=f_{r}(B)$;\qquad\qquad (ii) $A\leq \theta_l, \ B\geq \theta_r$.
\end{itemize}
We shall denote with $\mathscr{C}_f$ the set of pairs of connections
associated to the flux $f(x,u)$ in~\eqref{flux}.
\end{defn}
\begin{figure}
\begin{center}

\begin{tikzpicture}[scale=1.5,cap=round]
  \def\costhirty{0.8660256}

  \tikzstyle{axes}=[]
  \tikzstyle{important line}=[very thick]
  \tikzstyle{information text}=[rounded corners,fill=red!10,inner sep=1ex]

    \draw[->] (-2,0) -- (2.5,0) node[right] {$u$};
    \draw (-1,5/2) node[anchor=east] {$f_{l}$};
    \draw [black,domain=-1:1] plot (\x, {2*pow(\x,2)+0.5});
    \draw [black,domain=0:1.8] plot (\x, {2*pow(\x,2)-3.6*\x +2.5});
    \draw (1.8,5/2) node[anchor=west] {$f_{r}$};
    \draw [black,domain=-1:1.8] plot (\x, {3/2});
    \draw[thick,dashed] (-0.707,3/2) -- (-0.707,0);
    \draw[thick,dashed] (0.707,3/2) -- (0.707,0);
    \draw[thick,dashed] (0.343,3/2) -- (0.343,0);
    \draw[thick,dashed] (1.456,3/2) -- (1.456,0);
    \draw[thick,dashed] (0,1/2) -- (0,0);
    \draw[thick,dashed] (0.9,0.88) -- (0.9,0);
    \draw[blue,ultra thick] (-0.707,3/2) -- (1.456,3/2);
  \draw (-0.707,0) node[anchor=north] {$A$};
  \draw (0.707,0) node[anchor=north] {$\bar{A}$};
  \draw (1.456,0) node[anchor=north] {$B$};
  \draw (0.343,0) node[anchor=north] {$\bar{B}$};
  \draw (0,0) node[anchor=north] {$\theta_{l}$};
  \draw (0.9,0) node[anchor=north] {$\theta_{r}$};
  \end{tikzpicture}
  \caption{Example of  AB connection with $f_{l}$, $f_{r}$ strictly convex fluxes }
  \end{center}
\end{figure}
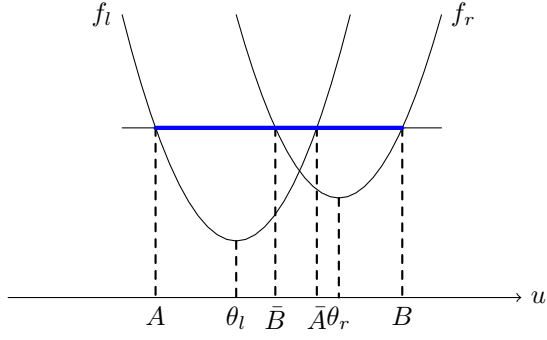
%
Observe that condition (ii) is equivalent to:\ \  (ii)' $f'_{l}(A)\leq 0$ and $f'_{r}(B)\geq 0$;\
which shows that the function
\begin{equation}
\label{AB-adapt-entr}
k_{{AB}}(x)=
\begin{cases}
A\ &\text{if} \ \ x<0\,,
\\
B\ &\text{if} \ \ x>0\,,
\end{cases}
\end{equation}
is a stationary undercompressive
(or marginally undercompressive) weak solution of~\eqref{eq1},\eqref{flux}, since
its characteristics diverge
from (or are parallel to) the flux-discontinuity interface $x=0$. 
The function $k_{AB}$ is used in~\cite{MR2505870} to define the adapted entropy
$\eta_{AB}(x,u)=\big|u-k_{AB}(x)\big|$, which in the spirit of~\cite{MR2132749} is employed  to select a unique
solution of the Cauchy problem~\eqref{eq1},\eqref{flux}-\eqref{datum}, according with the following definition.
\begin{defn}
\label{def-ab-entr-sol}
{\bf ($\textbf{\textit{AB}}$-Entropy Solution)}
Let $(A,B)$ be a connection and let $k_{AB}$ be the function defined in~\eqref{AB-adapt-entr}.
A function $u\in \mathbf{L}^{\infty}(\mathbb{R}\times [0,+\infty))$ is said  an $AB$-entropy solution 
of~\eqref{eq1},\eqref{flux}-\eqref{datum} if the following holds:
\begin{itemize}
\item[(i)] $u$ is a weak distributional solution of~\eqref{eq1},\eqref{flux} on $\mathbb{R}\times \mathbb{R}_+$, that is,
for any test function $\phi\in\mathcal{C}^1_c$ with compact support 
contained in $\mathbb{R}\times (0,+\infty)$, there holds
\begin{equation*}
 \int_{-\infty}^{\infty}\int_{0}^{\infty}
 \big\{u \phi_t+f(x,u)\phi_x\big\}dxdt
 =0.
 \end{equation*}
\item[(ii)]$u$ is a Kru\v{z}hkov entropy weak solution of~\eqref{eq1},\eqref{flux}-\eqref{datum}
on $(\mathbb{R}\setminus\{0\})\times[0,+\infty)$, that is $t\to u(\cdot,t)$ is a continuous map from $[0,+\infty)$
in $\mathbf{L}^1_{loc}(\mathbb{R})$, the initial condition~\eqref{datum} is satisfied, and:
\begin{itemize}
\item[(ii.a)]
 for any non-negative test function $\phi\in\mathcal{C}^1_c$ with compact support 
contained in $(-\infty,0)\times (0,+\infty)$, there holds
\begin{equation*}
 \int_{-\infty}^0\int_{0}^{\infty}
 \big\{|u-k|\phi_t+\big(f_l(u)-f_l(k)\big)\sgn(u-k)\phi_x
 \big\}dxdt
\geq 0\qquad\forall~k\in\mathbb{R};
 \end{equation*}
\item[(ii.b)]
 for any non-negative test function $\phi\in\mathcal{C}^1_c$ with compact support 
contained in $(0,+\infty)\times (0,+\infty)$, there holds
\begin{equation*}
 \int_0^{+\infty}\int_{0}^{\infty}
 \big\{|u-k|\phi_t+\big(f_r(u)-f_r(k)\big)\sgn(u-k)\phi_x
 \big\}dxdt
\geq 0\qquad\forall~k\in\mathbb{R}.
 \end{equation*}
 \end{itemize}
\item[(iii)] $u$ satisfies a Kru\v{z}hkov-type entropy inequality relative to the connection $(A,B)$,
that is,
for any non-negative test function $\phi\in\mathcal{C}^1_c$ with compact support 
contained in $\mathbb{R}\times (0,+\infty)$, there holds
\begin{equation*}
 \int_{-\infty}^{+\infty}\int_{0}^{\infty}
 \big\{\big|u-k_{AB}(x)\big|\phi_t+\big(f(x,u)-f(x,k_{AB}(x))\big)\sgn(u-k_{AB}(x))\phi_x
 \big\}dxdt
\geq 0.
 \end{equation*}
\end{itemize}
\end{defn}
\begin{rem}
\label{propert1-AB-sol}
If $u$ is an $AB$-entropy solution,
by property (ii) and because of the strict convexity of the fluxes~$f_{l,r}$,
it follows that  $u(\cdot,t)\in  BV_{loc}(\mathbb{R}\setminus\{0\})$ for any $t>0$. 
Actually, it was shown in~\cite{MR2743877} that for all connections such that both 
$A\neq \theta_{l}$ and $B\neq \theta_{r}$, 
one has  $u(\cdot,t)\in  BV_{loc}(\mathbb{R})$ for any $t>0$.
On the other hand, when $(A,B)$ is a critical connection, i.e. when either $A=\theta_{l}$ or $B= \theta_{r}$,
 the total variation of $u(\cdot,t)$ may well blow up
in a neighbourhood of the flux-discontinuity interface $x=0$, 
at some time $t>0$ (see~\cite{MR2743877}).
However,  since  $u$ is  in particular
a distributional solution of $u_t +f_l(u)_x=0$ on $(-\infty,0)\times(0,+\infty)$,
and of $u_t +f_r(u)_x=0$ on $(0,+\infty)\times (0,+\infty)$,
and since the fluxes $f_{l,r}$ are strictly convex,
relying on a result in~\cite{MR2374223} (see also~\cite{MR1869441})
one deduces
that $u(\cdot,t)$ still admits strong left and right traces at $x=0$, i.e. that 
(after a possibly modification on a set of measure zero)
for all $t>0$
there exist the one-sided
limits~\eqref{traces} (cfr~\cite{MR2505870}). Hence, since $u$ is a distributional solution of~\eqref{eq1},\eqref{flux}
on $\mathbb{R}\times (0,+\infty)$, by property (i), it follows that the Rankine-Hugoniot condition~\eqref{RH-i}
holds.
Furthermore, by the analysis in ~\cite[Lemma 3.2]{MR2505870} and ~\cite[Section~4.8]{MR2807133},
it follows that, because of
condition (i) of Definition~\ref{int-conn-def} 
and  assumption {\bf H1)} on $f_l, f_r$, we can equivalently
replace condition~(iii) in Definition~\ref{def-ab-entr-sol} with
\begin{itemize}
\item[(iii)'] $u$ satisfies an interface entropy condition relative to the connection $(A,B)$,
that is, the one-sided limits~\eqref{traces} satisfy
\begin{equation}
\label{int-entr-cond3}
\begin{gathered}
f_{l}(u_l(t))=f_{r}(u_r(t))\geq f_{l}(A)=f_{r}(B) ,\\
\noalign{\smallskip}
\big(u_l(t)\leq \theta_l\ \ \ \text{and} \ \ \ u_r(t) \geq \theta_r\big)\quad
 \Longrightarrow \ \ (u_l(t),u_r(t))= (A,B)
 \end{gathered}
\ \qquad \text{for a.e.}\ t>0\,.
\end{equation}
\end{itemize}
The  first condition in~\eqref{int-entr-cond3} prescribes
 that the flux of the solution at the flux-discontinuity interface be greater or equal than the value of the flux on the $(A,B)$ connection. Whereas the second condition in~\eqref{int-entr-cond3} 
excludes that the characteristics diverge from the flux-discontinuity interface 
when $(u_l(t),u_r(t))\neq (A,B)$,
i.e. the {\it $(A,B)$ characteristic condition} in~\cite[Definition~1.4]{MR2505870}
is verified.
\end{rem}
\begin{rem}
\label{lax-kruzhkov-entr}
Since $f_l, f_r$ are strictly convex maps, the Kru\v{z}hkov entropy
inequalities (ii.a)-(ii.b) in Definition~\ref{def-ab-entr-sol} are equivalent to the Lax entropy condition
~\cite{MR0267257,MR93653} 
\begin{equation}
u(x-,t)\geq u(x+,t)\qquad\forall~t,x>0\,.
\end{equation}
\end{rem}

It was proved in~\cite{MR2195983,MR2505870} (see also~\cite{MR2807133}) that  $AB$-entropy solutions of~\eqref{eq1},\eqref{flux}
with bounded initial data are unique and form an $\mathbf{L}^1$-contractive semigroup.  
We collect the properties of such a semigroup
in the following 
\begin{thm}
\label{semigroup-AB}
{\bf (Semigroup of $\textbf{\textit{AB}}$-Entropy Solutions)}~{\rm \cite{MR2195983,MR2505870}}
Let $f$ be a flux as in~\eqref{flux} satisfying the assumptions {\bf H1), H2), H3)}.
Then, given a connection $(A,B)\in\mathscr{C}_f$,
there exists a map $$\mathcal{S}^{AB} : [0,+\infty)\times 
 \mathbf{L}^\infty(\mathbb{R}) \to \mathbf{L}^\infty(\mathbb{R}),
 \qquad
 (t, \overline u) \mapsto \mathcal{S}_t^{AB} \overline u\,,$$
enjoying the following properties:  
\begin{itemize}
\item[(i)]
For each $\overline u\in\mathbf{L}^\infty(\mathbb{R})$, the function $u(x,t)\doteq \mathcal{S}_t^{AB} \overline u(x)$
provides the unique $AB$-entropy solution of the Cauchy problem~\eqref{eq1},\eqref{flux}-\eqref{datum}.
\item[(ii)]
$$\mathcal{S}_0^{AB} \overline u = \overline u,\qquad \mathcal{S}_s^{AB} \circ \mathcal{S}_t^{AB} \overline u = \mathcal{S}_{s+t}^{AB} \,\overline u
\qquad\forall~t,s\geq 0,\quad \forall~\overline u\in\mathbf{L}^\infty(\mathbb{R})\,.$$
\item[(iii)]
$$\big\| \mathcal{S}^{AB}_t \overline u -  \mathcal{S}^{AB}_s \overline v\big\|_{\mathbf{L}^1}\leq \big\|\overline u-\overline v\big\|_{\mathbf{L}^1}
+ L\big|t-s\big|\qquad\forall~t,s\geq 0,\quad \forall~\overline u, \overline v\in\mathbf{L}^\infty(\mathbb{R})\,,$$
for some positive constant $L>0$.
\end{itemize}
\end{thm}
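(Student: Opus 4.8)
Since Theorem~\ref{semigroup-AB} is established in \cite{MR2195983,MR2505870} (see also \cite{MR2807133}), I only outline the structure of the argument. The plan is to obtain the semigroup in three stages: first, construction of $AB$-entropy solutions for bounded data via an approximation scheme; second, $\mathbf{L}^1$-stability and uniqueness through a doubling-of-variables argument adapted to the interface $x=0$; third, verification of the semigroup laws and of the time-Lipschitz estimate.

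\emph{Existence.} I would build approximate solutions by wave-front tracking: replace $f_{l},f_{r}$ by piecewise-affine interpolants that are still uniformly convex on the range of interest, and $\overline u$ by piecewise-constant data $\overline u^{\,h}$ with $\|\overline u^{\,h}\|_{\infty}\le\|\overline u\|_{\infty}$. Riemann problems away from the interface are solved by the classical convex Riemann solver (only shocks arise, by strict convexity, cf.\ Remark~\ref{lax-kruzhkov-entr}), while the Riemann problem at $x=0$ is solved by the $(A,B)$-Riemann solver of \cite{MR2195983,MR2505870}, which by construction yields traces obeying the interface condition \eqref{int-entr-cond3}. Since $k_{AB}$ is a stationary solution and $0,1$ are common zeros of $f_{l}-f_{r}$, a maximum-principle / invariant-region argument gives a uniform $\mathbf{L}^{\infty}$ bound; genuine nonlinearity yields Ole\v{\i}nik-type one-sided Lipschitz estimates and hence uniform local $BV$ bounds on compact subsets of $(\mathbb{R}\setminus\{0\})\times(0,\infty)$; and a wave-interaction count keeps the number of fronts finite on bounded time intervals. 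Helly's theorem then extracts a subsequence converging in $\mathbf{L}^{1}_{loc}$ to a limit $u$, which satisfies (i)--(ii) of Definition~\ref{def-ab-entr-sol} by the standard passage to the limit in the weak and Kru\v{z}kov formulations; the interface inequality (iii) --- equivalently (iii)$'$ --- passes to the limit since along the approximations the one-sided traces at $x=0$ satisfy \eqref{int-entr-cond3}, and strong one-sided traces persist in the limit by Remark~\ref{propert1-AB-sol} (via the trace theorem of \cite{MR2374223}).

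\emph{Stability and uniqueness.} Given two $AB$-entropy solutions $u,v$ with data $\overline u,\overline v$, I would apply Kru\v{z}kov's doubling of variables separately on $\{x<0\}$ and on $\{x>0\}$. On each half-plane this gives the usual contraction estimate for the convex flux $f_{l}$, resp.\ $f_{r}$, up to a boundary term on $\{x=0\}$. Writing $q_{l}(a,b)=\sgn(a-b)\,(f_{l}(a)-f_{l}(b))$ and $q_{r}(a,b)=\sgn(a-b)\,(f_{r}(a)-f_{r}(b))$, summation leaves
\begin{equation*}
\frac{d}{dt}\int_{\mathbb{R}}\big|u(x,t)-v(x,t)\big|\,dx\ \le\ q_{r}\big(u_{r}(t),v_{r}(t)\big)-q_{l}\big(u_{l}(t),v_{l}(t)\big)
\qquad\text{for a.e.}\ t>0,
\end{equation*}
where $u_{l,r},v_{l,r}$ are the one-sided traces at $x=0$, which exist and satisfy \eqref{int-entr-cond3} by Remark~\ref{propert1-AB-sol}. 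The crucial algebraic fact --- the $\mathbf{L}^{1}$-dissipativity of the $(A,B)$ germ --- is that the right-hand side is $\le 0$ for every pair of trace quadruples satisfying \eqref{int-entr-cond3}. This I would check by a finite case analysis according to the positions of $u_{l},v_{l}$ relative to $\theta_{l}$ and of $u_{r},v_{r}$ relative to $\theta_{r}$: in the cases where the implication in \eqref{int-entr-cond3} is not triggered, monotonicity of $f_{l}$ on $(-\infty,\theta_{l}]$ and of $f_{r}$ on $[\theta_{r},\infty)$ combined with the Rankine--Hugoniot identities $f_{l}(u_{l})=f_{r}(u_{r})$, $f_{l}(v_{l})=f_{r}(v_{r})$ give the sign; in the remaining cases the implication forces one of the pairs to equal $(A,B)$, and then the flux inequality $f_{l}(u_{l})=f_{r}(u_{r})\ge f_{l}(A)=f_{r}(B)$ supplies it. Integrating in time yields the contraction $\|u(\cdot,t)-v(\cdot,t)\|_{\mathbf{L}^{1}}\le\|\overline u-\overline v\|_{\mathbf{L}^{1}}$ (to be read locally on finite domains, via finite speed of propagation, when the data are not $\mathbf{L}^{1}$); taking $\overline u=\overline v$ gives uniqueness, so $\mathcal{S}^{AB}_{t}\overline u$ is well defined by property~(i).

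\emph{Semigroup laws and the time-Lipschitz term.} The identity $\mathcal{S}^{AB}_{0}\overline u=\overline u$ is the initial condition in Definition~\ref{def-ab-entr-sol}, and $\mathcal{S}^{AB}_{s}\circ\mathcal{S}^{AB}_{t}=\mathcal{S}^{AB}_{s+t}$ follows from uniqueness, the defining inequalities being invariant under time translation, so that the solution restarted at time $t$ coincides with the original one. For the last term, one uses that for fixed data $t\mapsto\mathcal{S}^{AB}_{t}\overline u$ is Lipschitz into $\mathbf{L}^{1}_{loc}$: the fluxes $f_{l,r}(u)$ are bounded in terms of $\|\overline u\|_{\infty}$ on the invariant range, so testing the weak formulation gives $\|\mathcal{S}^{AB}_{t}\overline u-\mathcal{S}^{AB}_{s}\overline u\|_{\mathbf{L}^{1}}\le L\,|t-s|$ with $L$ controlled by $\sup\{|f_{l}'|,|f_{r}'|\}$ on that range; combining with the equal-time contraction through the triangle inequality yields property (iii). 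I expect the main obstacle to be the boundary-term analysis in the stability step --- that \eqref{int-entr-cond3} makes $q_{r}(u_{r},v_{r})-q_{l}(u_{l},v_{l})$ nonpositive for all admissible quadruples --- which is most delicate for critical connections $A=\theta_{l}$ or $B=\theta_{r}$, where local $BV$ bounds near $x=0$ fail and one must work with the strong traces of Remark~\ref{propert1-AB-sol}, and where one must also justify carefully the appearance of the interface term in the doubling argument for merely $BV_{loc}$ solutions.
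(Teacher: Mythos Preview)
The paper does not provide its own proof of Theorem~\ref{semigroup-AB}: it is quoted from the literature \cite{MR2195983,MR2505870} (see also \cite{MR2807133}) and stated without argument, serving only as background for the controllability analysis that follows. Your outline is a reasonable sketch of the approach in those references---existence via an approximation scheme with the $(A,B)$-Riemann solver at the interface, $\mathbf{L}^1$-contraction through the dissipativity of the $(A,B)$ germ, and the semigroup identities from uniqueness---so there is nothing to compare against here, and your acknowledgment that the result is cited is appropriate.
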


In the present paper we regard as control parameters both the initial data and the connection states
whose flux provides a lower bound on the flux of the solution at the flux-discontinuity interface.
Then, given a set $\mathcal{U}\subset\mathbf{L}^\infty(\mathbb{R})$, and a set $\mathscr{C}\subset \mathscr{C}_f$  of connections, 
we consider the following {\it attainable sets} for~\eqref{eq1},\eqref{flux}:
\begin{equation}
\label{att-set-def1}
\mathcal{A}^{AB}\big(T,\mathcal{U}\big)\doteq  
\big\{\mathcal{S}^{AB}_t \overline u : \overline u\in\mathcal{U}\big\}\,,\qquad
\mathcal{A}\big(T,\mathcal{U},\mathscr{C}\big)\doteq
\bigcup_{(A,B)\in\mathscr{C}} \mathcal{A}^{AB}\big(T,\mathcal{U})\,,
\end{equation}
which consist of all profiles that can be attained at a fixed time $T>0$
 by $AB$-entropy solutions of~\eqref{eq1},\eqref{flux} with initial
data that varies inside  $\mathcal{U}$,
or by  $AB$-entropy solutions of~\eqref{eq1},\eqref{flux} with initial
data in $\mathcal{U}$
and connections $(A,B)\in\mathscr{C}$.
In the case where $\mathcal{U}$ is the whole space $\mathbf{L}^\infty(\mathbb{R})$,
we set 
\begin{equation}
\label{att-set-def2}
\mathcal{A}^{AB}(T)\doteq \mathcal{A}^{AB}\big(T,\mathbf{L}^\infty(\mathbb{R})\big),
\qquad
\mathcal{A}\big(T\big)\doteq
\mathcal{A}\big(T,\mathbf{L}^\infty(\mathbb{R}),\mathscr{C}_f\big)\,.
\end{equation}
We will provide a characterisation of the sets~\eqref{att-set-def2}
in terms of certain Ole\v{\i}nik-type type estimates on the decay
of positive waves, and we will establish the $\mathbf{L}^1$-compactness
of~\eqref{att-set-def1} for classes $\mathcal{U}$
of initial data with  values in compact convex sets, and for compact sets $\mathscr{C}$ of connections.

\section{Statement of the main results}
\label{sec:main results}
We present here the main results of the paper whose proof will be established in Sections~\ref{Teo1}, \ref{Teo2}. 
Throughout the following
\vspace{-8pt}
\begin{equation}
D^- \omega(x)= \liminf_{h\rightarrow0}   \frac{\omega(x+h)-\omega(x)}{h} ,\quad \quad 
D^+ \omega(x)= \limsup_{h\rightarrow0}   \frac{\omega(x+h)-\omega(x)}{h},
\end{equation}
will denote, respectively, the lower and
the  upper Dini derivative of a function $\omega$ at $x$.
We shall also use the notations
$f_{l,-}^{-1}\doteq
({f_l}_{\mid (-\infty,\theta_l]})^{-1}$,\,
$f_{r,-}^{-1}\doteq
({f_r}_{\mid (-\infty,\theta_r]})^{-1}$, for the inverse of the restriction of 
$f_l$, $f_r$  to their decreasing part, respectively, 
and 
$f_{l,+}^{-1}\doteq ({f_l}_{\mid [\theta_l,+\infty)})^{-1}$, \,
$f_{r,+}^{-1}\doteq ({f_r}_{\mid [\theta_{r},+\infty)})^{-1}$, for the inverse of the restriction of 
$f_l$, $f_r$ to their increasing part, respectively.
Then, we set 
\begin{equation}
\label{pimap-def}
\pi_{l,\pm}\doteq f_{l,\pm}^{-1} \circ f_l\,,
\qquad\quad
\pi_{r,\pm}\doteq f_{r,\pm}^{-1} \circ f_r\,,
\qquad\quad
\pi_{l,\pm}^{r}\doteq f_{l,\pm}^{-1} \circ f_r\,,
\qquad\quad
\pi_{r,\pm}^{l}\doteq f_{r,\pm}^{-1} \circ f_l\,.
\end{equation}
and 
we introduce the following sets that characterize the
left and right traces of an $AB$-entropy solution at the \linebreak flux-discontinuity interface
(see Remark~\ref{charact-lrtraces-admissible}):
\begin{equation}
\label{lrtraces-admissible-def}
\begin{aligned}
\mathcal{T}_1&\doteq
\big\{(u_l,u_r)\in (\theta_l,+\infty)\times(\theta_r,+\infty);\ u_l\geq\pi_{l,+}(A),\ B\leq u_r\leq \pi_{r,+}^l(u_l)\big\}\,,
\\
\noalign{\smallskip}
\mathcal{T}_2&\doteq
\big\{(u_l,u_r)\in (-\infty,\theta_l)\times(-\infty,\theta_r);\ \pi_{l,-}^r(u_r)\leq u_l\leq A,\ u_r\leq\pi_{r,-}(B)\big\}\,,
\\
\noalign{\smallskip}
\mathcal{T}_{3,-}&\doteq
\big\{(u_l,u_r)\in [\theta_l,+\infty)\times(-\infty,\theta_r);\ \pi_{l,+}(A)\leq u_l\leq \pi_{l,+}^r(u_r),\  u_r\leq \pi_{r,-}(B)\big\}\,,
\\
\noalign{\smallskip}
\mathcal{T}_{3,+}&\doteq
\big\{(u_l,u_r)\in (\theta_l,+\infty)\times(-\infty,\theta_r];\ u_l\geq\pi_{l,+}(A),\ \pi_{r,-}^l(u_l)\leq u_r\leq \pi_{r,-}(B)\big\}\,.
\end{aligned}
\end{equation}
%
\begin{thm}\label{Teo1}
Let $f$ be a flux as in~\eqref{flux} satisfying the assumptions {\bf (H1), (H2), (H3)},
and let $(A,B)\in\mathscr{C}_f$. Then, for any fixed $\text{T}>0$,
the set $\mathcal{A}^{AB}(T)$ in~\eqref{att-set-def2}
is given by 
\begin{equation}
\label{att-set-union-3}
\mathcal{A}^{AB}(T)= \mathcal{A}_1(T)\cup \mathcal{A}_2(T)\cup \mathcal{A}^{AB}_3(T)\,,
\end{equation}
where $\mathcal{A}_1(T), \mathcal{A}_2(T), \mathcal{A}^{AB}_3(T)$ are sets of 
functions $\omega\in\mathbf{L}^\infty(\mathbb{R})$ having  essential left and right limits at $x=0$,
defined as follows. 

 \begin{itemize}
  \item[$\mathcal{A}_{1}\text{(T)}$] is the set  of   all functions $\omega$
  that satisfy\,
  $\big(\omega(0-),\, \omega(0+)\big)\in \mathcal{T}_1$\,,
  and for which there exists  $R>0$ such that the following conditions hold.
 \begin{equation}
\begin{aligned}
&\omega(x) \geq
(f'_l)^{-1}\big({x}/{T}+f'_l(\omega(0-))\big)  \  \ \forall~x\in (-\infty,0),
\qquad\quad
\omega(x)\geq (f'_{r})^{-1}\big({x}/{T}\big)  \ \  \forall~x\in (0,R),
\\
\noalign{\medskip}
 &\omega(x) < (f'_{r})^{-1}\big({x}/{T}\big) \  \ \forall~x\in(R,+\infty),
 \qquad\qquad\qquad\qquad \omega(R-)\geq \omega(R+),
 \end{aligned}
 \label{c1}  
 \end{equation}
 \begin{align}
 &\hspace{-0.5in}
 D^+\omega(x)\leq 
 \begin{cases}
  &\hspace{-0.15in}1/\big({f''_{l}(\omega(x))\cdot T}\big)  \hspace{0.7in}\forall~x\in (-\infty,0)\,,\\
  \noalign{\smallskip}
  & \hspace{-0.15in}\dfrac{f'_r(\omega(x))\big[  f'_l \circ f_{l,+}^{-1}\circ f_r(\omega(x)) \big]^2 }
     {  \big[f''_l\circ f_{l,+}^{-1}\circ f_r(\omega(x))\big]\left[f'_r(\omega(x))\right]^2\big( f'_r(\omega(x))T-x \big)+x\big[f'_l\circ f_{l,+}^{-1}\circ f_r(\omega(x))\big] ^2  f''_r(\omega(x)) } \qquad \forall~x\in (0, R),
    \\
    \noalign{\medskip}
    &\hspace{-0.15in}{1}/\big({f''_{r}(\omega(x))\cdot T}\big) \hspace{0.7in}\forall~x\in (R,+\infty)\,.\\
  \end{cases}
  \label{bound totale1} 
 \end{align}

  \item[$\mathcal{A}_{2}\text{(T)}$]is the set  of   all  functions $\omega$
  that satisfy\,
  $\big(\omega(0-),\, \omega(0+)\big)\in \mathcal{T}_2$\,,
  and for which there exists  $L<0$ such that the following conditions hold.
   \begin{equation}
\begin{aligned}
&\omega(x) > (f'_{l})^{-1}\big({x}/{T}\big) \ \ \forall~x\in(-\infty, L),
\qquad\qquad  \qquad\qquad
\omega(x) \leq 
(f'_{l})^{-1}\big({x}/{T}\big)  \  \ \forall~x\in (L,0),
\\
\noalign{\medskip}
&\omega(x) \leq  (f'_{r})^{-1}\big({x}/{T}+f'_r(\omega(0+))\big)  \  \ \forall~x\in (0,+\infty),
\qquad\ \ \omega(L-)\geq \omega(L+),
\end{aligned}
\label{c4}  
 \end{equation}
 \begin{align}
 &\hspace{-0.5in}
 D^+\omega(x)\leq 
 \begin{cases}
  &\hspace{-0.15in}{1}/\big({f''_{l}(\omega(x))\cdot T}\big)  \hspace{0.7in}\forall~x\in (-\infty,L)\,,\\
  \noalign{\smallskip}
  & \hspace{-0.15in}\dfrac{f'_l(\omega(x))\left[  f'_r \circ f_{r,-}^{-1}\circ f_l(\omega(x)) \right]^2 }
     {  \left[f''_r\circ f_{r,-}^{-1}\circ f_l(\omega(x))\right]\left[f'_l(\omega(x))\right]^2\big( f'_l(\omega(x))T-x \big)+x\left[f'_r\circ f_{r,-}^{-1}\circ f_l(\omega(x))\right] ^2  f''_l(\omega(x)) } \qquad \forall~x\in (L,0),
    \\
    \noalign{\medskip}
    &\hspace{-0.15in}\big({1}/{f''_{r}(\omega(x))\cdot T}\big) \hspace{0.7in}\forall~x\in (0,+\infty)\,.\\
  \end{cases}
  \label{bound totale2} 
 \end{align}
  
  \vspace{-10pt}
  \item[$\mathcal{A}^{AB}_{3}\text{(T)}$] is the set  of all functions $\omega$
    for which there exist  $L\leq 0 \leq R$, such that 
    \vspace{-4pt}
  \begin{equation}
  \label{T3-cond1}
  \big(\omega(0-),\, \omega(0+)\big)\in
  \begin{cases}
   \mathcal{T}_{3,-}\cup \mathcal{T}_{3,+}\ \ &\text{if} \qquad L=R=0\,,
   \\
   \noalign{\smallskip}
   \big\{(A,B)\big\}\ \ &\text{if} \qquad L\leq 0 \leq R\,,
   \end{cases}
  \end{equation}
   
   \vspace{-10pt}
   \noindent
   and the following conditions hold.
   \begin{equation}
    \begin{aligned}
    &\omega(x)=A\quad \forall~x\in (L,0),\qquad\quad  \omega(x)=B\quad\forall~x\in (0,R),
    \\
    \noalign{\smallskip}
   &\omega(L-)\geq \omega(L+)\,,\qquad\quad 
   \omega(R-)\geq \omega(R+)\,,
    \\
   \noalign{\smallskip}
\allowdisplaybreaks
     &\omega(x) \geq 
     \begin{cases}
     (f'_{l})^{-1}\big({x}/{T}\big)\ \ &\text{if}\ \ L<0,
     \\
     \noalign{\smallskip}
     (f'_{l})^{-1}\big({x}/{T}\!+\!f'_l(\omega(0-))\big)  &\text{if} \ \ L=0,
     \end{cases}
     \qquad \forall~x\in (-\infty, L),
 \\
    \noalign{\smallskip}
     &\omega(x) \leq 
     \begin{cases}
     (f'_{r})^{-1}\big({x}/{T}\big)\ \ &\text{if}\ \ R>0,
     \\
     \noalign{\smallskip}
     (f'_{r})^{-1}\big({x}/{T}\!+\!f'_r(\omega(0+))\big)  &\text{if} \ \ R=0,
     \end{cases}
      \qquad \forall~x\in (R,+\infty),
     \end{aligned}
     \label{c7}
     \end{equation}
\vspace{-15pt}
\begin{align}
 D^+\omega(x)&\leq 
 \begin{cases}
  {1}/\big({f''_{l}(\omega(x))\cdot T}\big) \quad & \forall~x\in (-\infty, L)\,,\\
  \noalign{\smallskip}
  \big({1}/{f''_{r}(\omega(x))\cdot T}\big) \quad &\forall~x\in (R,+\infty)\,.\\
  \end{cases} 
   \label{bound totale3} 
 \end{align}
 
\end{itemize}
\end{thm}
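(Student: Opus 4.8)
The plan is to prove the two inclusions separately, working through the three cases (corresponding to $\mathcal{A}_1, \mathcal{A}_2, \mathcal{A}^{AB}_3$) which, as the trace sets $\mathcal{T}_1, \mathcal{T}_2, \mathcal{T}_{3,\pm}$ suggest, are distinguished by the sign configuration of the one-sided limits $\omega(0\pm)$ relative to $\theta_l,\theta_r$. For the inclusion $\mathcal{A}^{AB}(T)\subseteq \mathcal{A}_1(T)\cup\mathcal{A}_2(T)\cup\mathcal{A}^{AB}_3(T)$, I would fix $\overline u\in\mathbf{L}^\infty(\mathbb{R})$, set $u=\mathcal{S}^{AB}_\cdot\overline u$ and $\omega=u(\cdot,T)$, and use the theory of generalized characteristics (Dafermos~\cite{MR0457947}) separately on each half-plane $\{x<0\}$ and $\{x>0\}$, where $u$ solves a scalar conservation law with the smooth strictly convex flux $f_l$ resp.\ $f_r$. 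On each side the classical Oleĭnik one-sided estimate gives $D^+\omega(x)\le 1/(f''_{l,r}(\omega(x))\,T)$, which is exactly the first and last branches of \eqref{bound totale1}, \eqref{bound totale2}, \eqref{bound totale3}; the backward characteristic through $(x,T)$ is a straight segment whose slope is $f'_{l,r}(\omega(x))$, and tracing it back to time $0$ (where $u=\overline u$ is only $\mathbf{L}^\infty$, hence the characteristic speed lies in the range of $f'_{l,r}$) yields the lower bounds such as $\omega(x)\ge (f'_r)^{-1}(x/T)$ in the region where the characteristic does not cross $x=0$ before time $0$. The middle branch of \eqref{bound totale1}–\eqref{bound totale2} and the more delicate lower bounds arise precisely for those $(x,T)$ whose minimal backward characteristic crosses the interface $x=0$ at some time $\tau\in(0,T)$: there the characteristic is a broken line, refracting at $x=0$ according to the Rankine–Hugoniot relation $f_l(u_l(\tau))=f_r(u_r(\tau))$, so the map gluing the two straight pieces is $\pi^{r}_{l,+}=f_{l,+}^{-1}\circ f_r$ (or its variants), and differentiating the resulting relation $x = f'_r(\omega(x))(T-\tau)$, $0 = f'_l(\pi^r_{l,+}(\omega(x)))\,\tau + (\text{position at }0)$ with respect to $x$ produces the rational expression displayed in \eqref{bound totale1}. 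The trace membership $(\omega(0-),\omega(0+))\in\mathcal{T}_i$ is read off from Remark~\ref{charact-lrtraces-admissible} together with the interface entropy condition \eqref{int-entr-cond3} and the structural Proposition~\ref{Norarefactions} (no rarefactions emanating from the interface), which also forces the plateau structure $\omega\equiv A$ on $(L,0)$, $\omega\equiv B$ on $(0,R)$ in case $3$.

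For the converse inclusion, given a target $\omega$ in one of the three classes I would construct an explicit initial datum $\overline u$ steering the system to $\omega$ at time $T$. The natural candidate, as in~\cite{MR1616586}, is to run the characteristics backward: define $\overline u(y)$ at $y = x - f'_{l,r}(\omega(x))T$ (for characteristics that stay on one side) by $\overline u(y)=\omega(x)$, and on the exceptional zone by following the refracted characteristic back through the interface, setting $\overline u$ along the time-$0$ footprint to the appropriate refracted value; the Oleĭnik bounds on $D^+\omega$ are exactly what guarantees that $x\mapsto x - f'_{l,r}(\omega(x))T$ is non-decreasing, so this assignment is consistent (well-defined a.e.) and $\overline u\in\mathbf{L}^\infty$. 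Then one checks, again via generalized characteristics and the uniqueness in Theorem~\ref{semigroup-AB}, that $\mathcal{S}^{AB}_T\overline u=\omega$: the entropy conditions (ii.a)–(ii.b) hold because the constructed solution has only admissible (Lax) shocks — guaranteed by $\omega(L-)\ge\omega(L+)$, $\omega(R-)\ge\omega(R+)$ and the Oleĭnik inequalities — and condition (iii)$'$ holds at $x=0$ because $(\omega(0-),\omega(0+))\in\mathcal{T}_i$ was precisely designed to encode \eqref{int-entr-cond3}.

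The main obstacle I expect is the middle branch of the Oleĭnik-type inequality \eqref{bound totale1}–\eqref{bound totale2}: correctly setting up and differentiating the implicit system that describes the broken backward characteristic refracting at $x=0$, keeping careful track of which branch of $f_l$ or $f_r$ is inverted (the maps $\pi_{l,\pm}$, $\pi^r_{l,\pm}$, $\pi^l_{r,\pm}$), and verifying that the resulting sharp bound is attained by suitable extremal data — this is the computation that genuinely uses both strict convexity constants and the geometry of the interface, and is where the spatially-discontinuous setting departs from~\cite{MR1616586}. A secondary difficulty is the bookkeeping for critical connections ($A=\theta_l$ or $B=\theta_r$), where $u(\cdot,t)$ may fail to be $BV$ near $x=0$ (Remark~\ref{propert1-AB-sol}) so one must argue with strong traces rather than pointwise $BV$ structure, and the degenerate sub-cases $L=0$ and/or $R=0$ in \eqref{T3-cond1}, which interpolate between the plateau case and the cases $\mathcal{A}_1,\mathcal{A}_2$ and require the trace to lie in $\mathcal{T}_{3,-}\cup\mathcal{T}_{3,+}$ rather than on the connection itself.
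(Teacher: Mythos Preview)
Your proposal is correct and follows essentially the same approach as the paper's proof: both inclusions via generalized characteristics, with the forward direction reading off the Ole\v{\i}nik bounds from non-crossing of backward (possibly refracted) characteristics together with Proposition~\ref{Norarefactions} and Remark~\ref{charact-lrtraces-admissible}, and the reverse direction by the backward-characteristic construction of $\overline u$ as in~\cite{MR1616586}. The paper isolates the equivalence ``Dini bound $\Leftrightarrow$ monotonicity of $x\mapsto x - f'_{l,r}(\omega(x))T$ (and of the refracted map on the middle region)'' as a separate preparatory lemma (Lemma~\ref{lemma bound}/\ref{lem2}), and in the construction of $\overline u$ it explicitly partitions the $x$-axis into rarefaction centers, compression-wave intervals, and the generic set, filling the compression intervals so that all shocks form exactly at time $T$; you allude to both of these but would need to carry them out in detail.
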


\begin{figure}[!htbp] 
\begin{center}
\begin{tikzpicture}
[scale=1.4,cap=round]
 \def\costhirty{0.8660256}

\tikzstyle{axes}=[]
  \tikzstyle{important line}=[very thick]
  \tikzstyle{information text}=[rounded corners,fill=red!10,inner sep=1ex]

\draw[thick,->] (-2.2,0) -- (2.2,0) node[right] {$x$};
\draw[thick,->] (0,0) -- (0,1.9) node[right] {$t$};

\draw [domain=-2.2:2.2] plot (\x, {1.6});
\draw [dotted] (0,0.4) -- (0.9,1.6);
\draw [dotted] (0.4,0) -- (0.9,1.6);
\draw [dotted] (0,0.9) -- (0.6,1.6);
\draw [dotted] (0,1.25) -- (0.17,1.6); 
\draw [dotted] (0,1.25) -- (-0.9,0);
\draw [dotted] (0,0.9) -- (-0.7,0);
\draw [dotted] (-0.2,0) -- (0,0.4);
\draw (0.9,1.6) node[anchor=south] {$R$};

\draw [dotted] (1.2,1.6) -- (1.3, 0);
\draw [dotted] (1.8,1.6) -- (1.6,0);
\draw [dotted] (-0.8,1.6) -- (-1.1, 0);
\draw [dotted] (-1.6,1.6) -- (-1.4,0);
\draw (0,1.7) node[anchor=east] {$T$};
\draw (0.6,1.6) node[anchor=south] {$x$};
\draw (-0.7,0) node[anchor=north] {$\varphi_{1}(x)$};
\draw (1.8,1.6) node[anchor=south] {$y$};
\draw (1.6,0) node[anchor=north] {$\varphi_{1}(y)$};
\draw (-1.6,1.6) node[anchor=south] {$z$};
\draw (-1.5,0) node[anchor=north] {$\varphi_{1}(z)$};
\end{tikzpicture}
\end{center}
\vspace{-10pt}
\caption{Characteristics's behavior for profiles in  $\mathcal{A}_{1}(T)$
\label{figure 2-1}}
\vspace{8pt}
%
\begin{center}
\begin{tikzpicture}
[scale=1.5,cap=round]
 \def\costhirty{0.8660256}

\tikzstyle{axes}=[]
  \tikzstyle{important line}=[very thick]
  \tikzstyle{information text}=[rounded corners,fill=red!10,inner sep=1ex]

\draw[thick,->] (-2.2,0) -- (2.2,0) node[right] {$x$};
\draw[thick,->] (0,0) -- (0,1.9) node[right] {$t$};

\draw [domain=-2.2:2.2] plot (\x, {1.6});
\draw [dotted] (0,0.4) -- (-0.9,1.6);
\draw [dotted] (-0.4,0) -- (-0.9,1.6);
\draw [dotted] (0,0.9) -- (-0.6,1.6);
\draw [dotted] (0,1.25) -- (-0.17,1.6); 
\draw [dotted] (0,1.25) -- (0.9,0);
\draw [dotted] (0,0.9) -- (0.7,0);
\draw [dotted] (0.2,0) -- (0,0.4);
\draw (-0.9,1.6) node[anchor=south] {$L$};

\draw [dotted] (1.2,1.6) -- (1.3, 0);
\draw [dotted] (1.8,1.6) -- (1.6,0);
\draw [dotted] (-1.3,1.6) -- (-0.9, 0);
\draw [dotted] (-1.6,1.6) -- (-1.4,0);
\draw [dotted] (0.6,1.6) -- (1.1,0);
\draw (0,1.7) node[anchor=east] {$T$};
\draw (-1.3,1.6) node[anchor=south] {$x$};
\draw (-0.7,0) node[anchor=north] {$\varphi_{2}(x)$};
\draw (1.8,1.6) node[anchor=south] {$y$};
\draw (1.6,0) node[anchor=north] {$\varphi_{2}(y)$};
\draw (-1.6,1.6) node[anchor=south] {$z$};
\draw (-1.5,0) node[anchor=north] {$\varphi_{2}(z)$};
\end{tikzpicture}
\end{center}
\vspace{-10pt}
\caption{Characteristics's behavior for profiles in  $\mathcal{A}_{2}(T)$
\label{figure  2-2}}
\vspace{8pt}
\begin{center}
\begin{tikzpicture}
[scale=1.5,cap=round]
 \def\costhirty{0.8660256}

\tikzstyle{axes}=[]
  \tikzstyle{important line}=[very thick]
  \tikzstyle{information text}=[rounded corners,fill=red!10,inner sep=1ex]

\draw[thick,->] (-2.2,0) -- (2.2,0) node[right] {$x$};
\draw[thick,->] (0,0) -- (0,1.9) node[right] {$t$};

\draw [domain=-2.2:2.2] plot (\x, {1.6});
\draw [dotted] (-0.4,0) -- (-0.9,1.6);
\draw [dotted] (0,0) -- (-0.9,1.6);
\draw [dotted] (0,0) -- (0.5,1.6);

\draw (-0.9,1.6) node[anchor=south] {$L$};
\draw (0.5,1.6) node[anchor=south] {$R$};
\draw (-0.2,0.9) node[anchor=south] {$A$};
\draw (0.2,0.9) node[anchor=south] {$B$};

\draw [dotted] (1.2,1.6) -- (1.3, 0);
\draw [dotted] (1.8,1.6) -- (1.6,0);
\draw [dotted] (-1.3,1.6) -- (-0.9, 0);
\draw [dotted] (-1.6,1.6) -- (-1.4,0);
\draw [dotted] (0.6,1.6) -- (1.1,0);
\draw (0,1.7) node[anchor=east] {$T$};
\draw (-1.3,1.6) node[anchor=south] {$x$};
\draw (-0.7,0) node[anchor=north] {$\varphi_{3}(x)$};
\draw (1.8,1.6) node[anchor=south] {$y$};
\draw (1.6,0) node[anchor=north] {$\varphi_{3}(y)$};
\draw (-1.6,1.6) node[anchor=south] {$z$};
\draw (-1.5,0) node[anchor=north] {$\varphi_{3}(z)$};
\end{tikzpicture}
\end{center}
\vspace{-10pt}
\caption{Characteristics's behavior for profiles in  $\mathcal{A}^{AB}_{3}(T)$
\label{figure  2-3}}

%
\bigskip
\noindent
\setcounter{rem}{3}
\begin{rem}
\label{main-thm-interpret2}
The conditions~\eqref{bound totale1}, 
\eqref{bound totale2}, \eqref{bound totale3}
reflect the fact that, since the fluxes are strictly convex, positive waves of 
$AB$-entropy solutions
decay in time. 
Such conditions are  sufficient to 
guarantee the exact-time controllability of~\eqref{eq1}-\eqref{datum}. 
In fact, starting at a time $T>0$ with a profile 
$\omega\in \mathcal{A}_1(T)\cup \mathcal{A}_2(T)\cup \mathcal{A}^{AB}_3(T)$, 
because of~\eqref{bound totale1}, 
\eqref{bound totale2}, \eqref{bound totale3} one can trace 
backward the
 (generalized) characteristics $\xi_1$, $\xi_2$ through points  $x_1<x_2$ 
 without crossing  in $\mathbb{R}\times (0,T)$,
 unless $\omega(x_1)=A$ and $\omega(x_2)=B$, in which cases the characteristics $\xi_1, \xi_2$ intersects
 only at $x=0$ (see  Figures~\ref{figure 2-1}-\ref{figure 2-3}).
In particular, by~\eqref{c1}, \eqref{c4}, the inequalities~\eqref{bound totale1}, \eqref{bound totale2}
imply that
\begin{equation}
\label{bound totale 4}
D^+ \omega(x) < \frac{f'_l(\omega(x))}{x f''_l(\omega(x))} 
\quad\forall~x\in(L,0)\,,
\qquad\quad
D^+ \omega(x) < \frac{f'_r(\omega(x))}{x f''_r(\omega(x))} 
\quad\forall~x\in(0,R)\,,
\end{equation}
and we recover the same type of boundary controllability condition 
derived in~\cite{MR1616586},
if we regard the left and right traces at $x=0$ as controls.
Notice that  we have in~\eqref{bound totale 4}
a strict inequality since here, differently from~\cite{MR1616586}, characteristics having slope with the same sign 
cannot intersect even at $x=0$ (they can intersect only at $t=0$).
\end{rem}
\smallskip
The characterization of the attainable set $\mathcal{A}^{AB}(T)$ provided by Theorem~\ref{Teo1} 
yields  the ${\bf L^1}$-compactness of the attainable sets $\mathcal{A}^{AB}\big(T,\mathcal{U}\big)$,
$\mathcal{A}\big(T,\mathcal{U},\mathscr{C}\big)$ 
in~\eqref{att-set-def1}
for classes $\mathcal{U}$ of admissible controls uniformly bounded and with values in convex and closed sets,
as stated in the following
\begin{thm}\label{Teo2}
Let $G:\mathbb{R}\hookrightarrow \mathbb{R}$ be a measurable, 
bounded multifunction with convex and closed values, and let $\mathscr{C}\subset \mathscr{C}_f$ be a compact set of connections.
Consider the set
\vspace{-5pt}
\begin{equation}\label{insieme controlli}
\mathcal{U}=\left\lbrace \overline{u}\in {\bf L^{\infty}}(\mathbb{R}):\overline{u}(x)\in G(x)\text{ for }a.e.\ \ x\in \mathbb{R} \right\rbrace. 
\end{equation}

\vspace{-5pt}
\noindent
 Then, under the same assumptions of Theorem~\ref{Teo1}, 
 for any fixed $T>0$, 
 the sets $\mathcal{A}^{AB}\big(T,\mathcal{U}\big)$,
$\mathcal{A}\big(T,\mathcal{U},\mathscr{C}\big)$ 
in~\eqref{att-set-def1}
 are compact in the ${\bf L^1}_{loc}(\mathbb{R})$-topology and,
 letting $\mathcal{S}^{AB}_{(\cdot)} \overline u\,\big|_{T}$ denote the restriction of $\mathcal{S}^{AB}_{(\cdot)} \overline u$
 to $\mathbb{R}\times [0,T]$,
 the sets 
 \vspace{-2pt}
\begin{equation}
\label{att-set-def3}
\mathcal{A}^{AB}\big(\mathcal{U}\big)\doteq  
\big\{\mathcal{S}^{AB}_{(\cdot)} \overline u \,\big|_T: \overline u\in\mathcal{U}\big\}\,,\qquad
\mathcal{A}\big(\mathcal{U},\mathscr{C}\big)\doteq
\bigcup_{(A,B)\in\mathscr{C}} \mathcal{A}^{AB}\big(\mathcal{U})\,,
\end{equation}

\vspace{-9pt}
\noindent
are compact in the ${\bf L^1}_{loc}(\mathbb{R}\times [0,T])$-topology.
\end{thm}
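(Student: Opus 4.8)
The plan is to deduce compactness from the characterization of $\mathcal{A}^{AB}(T)$ given by Theorem~\ref{Teo1}, combined with a Helly-type argument adapted to the structure of the sets $\mathcal{A}_1(T),\mathcal{A}_2(T),\mathcal{A}^{AB}_3(T)$. First I would observe that, since $G$ is a bounded multifunction, there is a constant $M>0$ with $\|\overline u\|_{\mathbf{L}^\infty}\le M$ for every $\overline u\in\mathcal{U}$; by the $\mathbf{L}^1$-contractivity and finite speed of propagation built into the semigroup $\mathcal{S}^{AB}$ (Theorem~\ref{semigroup-AB}), together with the Ole\v{\i}nik-type one-sided bounds \eqref{bound totale1}--\eqref{bound totale3}, the attained profiles $\omega=\mathcal{S}^{AB}_T\overline u$ are uniformly bounded in $\mathbf{L}^\infty$ and have uniformly bounded variation on every compact subset of $\mathbb{R}\setminus\{0\}$ that stays away from the interface and from the (datum-dependent) points $L,R$. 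The key point is that the breakpoints $L\le 0\le R$ and the interface jump at $x=0$ are the only places where $BV_{loc}$-bounds may degenerate, and $L,R$ range over a bounded set because characteristics issued from $x=0$ at time $T$ reach only a bounded region; so by a diagonal extraction one gets, from any sequence $\omega_n=\mathcal{S}^{AB}_T\overline u_n$, a subsequence converging in $\mathbf{L}^1_{loc}(\mathbb{R})$ to some $\omega$, with convergent breakpoints $L_n\to L$, $R_n\to R$ and convergent one-sided traces at $x=0$.

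The next step is to show that the limit $\omega$ again lies in $\mathcal{A}^{AB}(T)$ \emph{and} is attained from an admissible control in $\mathcal{U}$. Membership in $\mathcal{A}^{AB}(T)$ follows by passing to the limit in the closed conditions of Theorem~\ref{Teo1}: the inequalities \eqref{c1},\eqref{c4},\eqref{c7} and the Dini-derivative bounds \eqref{bound totale1}--\eqref{bound totale3} are stable under $\mathbf{L}^1_{loc}$-convergence of uniformly-$BV_{loc}$ functions (the Dini bounds pass to the limit because they are equivalent to one-sided Lipschitz/semiconcavity-type estimates with continuous coefficients, which are preserved under a.e. and $\mathbf{L}^1_{loc}$ limits once the breakpoints converge), and the trace conditions $(\omega(0-),\omega(0+))\in\mathcal{T}_i$ survive because the sets $\mathcal{T}_1,\mathcal{T}_2,\mathcal{T}_{3,\pm},\{(A,B)\}$ are closed and the traces converge. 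For the attainability from $\mathcal{U}$: since Theorem~\ref{Teo1} is an \emph{exact}-time controllability statement, each $\omega_n$ and the limit $\omega$ are attained; one must however produce a control \emph{valued in $G$}. Here I would use the backward-characteristics construction underlying the proof of Theorem~\ref{Teo1} to build, for the limit profile $\omega$, an explicit initial datum $\overline u_\infty$, and then show $\overline u_\infty(x)\in G(x)$ a.e. Concretely, one extracts a further subsequence with $\overline u_n\rightharpoonup^* \overline u_\infty$ weak-$*$ in $\mathbf{L}^\infty$; by convexity and closedness of the values $G(x)$, a.e. weak limits of $G$-valued functions are again $G$-valued (this is the classical closure property for measurable multifunctions with convex closed values), so $\overline u_\infty\in\mathcal{U}$; finally one checks $\mathcal{S}^{AB}_T\overline u_\infty=\omega$ by the weak-$*$ stability of the backward-characteristic representation, or alternatively by a direct $\mathbf{L}^1_{loc}$-continuity argument. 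For the union $\mathcal{A}(T,\mathcal{U},\mathscr{C})=\bigcup_{(A,B)\in\mathscr{C}}\mathcal{A}^{AB}(T,\mathcal{U})$ one adds to the extraction a convergent subsequence $(A_n,B_n)\to(A,B)\in\mathscr{C}$ (using compactness of $\mathscr{C}$) and uses continuous dependence of $\mathcal{S}^{AB}$ on the connection.

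To obtain the space-time compactness of $\mathcal{A}^{AB}(\mathcal{U})$ and $\mathcal{A}(\mathcal{U},\mathscr{C})$ in $\mathbf{L}^1_{loc}(\mathbb{R}\times[0,T])$, I would combine the just-established compactness of the time-$t$ slices with the uniform Lipschitz-in-time estimate $\|\mathcal{S}^{AB}_t\overline u-\mathcal{S}^{AB}_s\overline u\|_{\mathbf{L}^1(K)}\le L|t-s|$ from Theorem~\ref{semigroup-AB}(iii), valid on every compact $K$. Thus the family $\{t\mapsto\mathcal{S}^{AB}_t\overline u\}_{\overline u\in\mathcal{U}}$ is equi-Lipschitz as a family of curves in $\mathbf{L}^1_{loc}(\mathbb{R})$ with relatively compact image (by the slice compactness applied at each rational $t$, then upgraded to all $t$), so by the Arzel\`a--Ascoli theorem in $C([0,T];\mathbf{L}^1_{loc}(\mathbb{R}))$ it is relatively compact; closedness follows since any limit curve is again of the form $\mathcal{S}^{AB}_{(\cdot)}\overline u_\infty$ with $\overline u_\infty\in\mathcal{U}$, by the slice-level closedness already proved at $t=T$ (or at any $t$), and $C([0,T];\mathbf{L}^1_{loc})$-convergence implies $\mathbf{L}^1_{loc}(\mathbb{R}\times[0,T])$-convergence. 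The main obstacle I anticipate is the second step: controlling the behaviour near $x=0$ and near the moving breakpoints $L_n,R_n$ when the connection is \emph{critical} ($A=\theta_l$ or $B=\theta_r$), where, as recalled in Remark~\ref{propert1-AB-sol}, the total variation of $\mathcal{S}^{AB}_T\overline u$ can blow up at the interface; one must argue that this blow-up is confined to an arbitrarily small neighbourhood of $x=0$ uniformly in $\overline u\in\mathcal{U}$ (which should follow from the Ole\v{\i}nik bounds \eqref{bound totale1},\eqref{bound totale2} away from $x=0$ and from the fact that only the single interface jump, whose size is controlled by the closed trace sets $\mathcal{T}_i$, is uncontrolled), so that $\mathbf{L}^1_{loc}$ — rather than $BV_{loc}(\mathbb{R})$ — compactness still goes through.
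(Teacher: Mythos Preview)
Your proposal assembles the right raw ingredients (uniform $\mathbf{L}^\infty$ bound, Helly-type extraction on compacta bounded away from $x=0$, weak-$*$ extraction of a subsequence $\overline u_{n}\stackrel{*}{\rightharpoonup}\overline u_\infty$, and the Mazur/convex-closed-values argument giving $\overline u_\infty\in\mathcal{U}$), and these are exactly the tools the paper uses. However, there is a genuine gap at the identification step, and your route through Theorem~\ref{Teo1} is an unnecessary detour that does not close it.

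The gap is this: even granting that the $\mathbf{L}^1_{loc}$ limit $\omega$ of $\mathcal{S}^{A_nB_n}_T\overline u_n$ satisfies all the pointwise conditions describing $\mathcal{A}^{AB}(T)$ in Theorem~\ref{Teo1}, this tells you only that $\omega$ is attainable from \emph{some} $\mathbf{L}^\infty$ datum; it does not give $\omega=\mathcal{S}^{AB}_T\overline u_\infty$. Your two suggested mechanisms for this last equality are not arguments. ``Weak-$*$ stability of the backward-characteristic representation'' is not available: the backward construction produces, for a given $\omega$, a \emph{particular} datum, and there is no reason it should coincide with the weak-$*$ limit $\overline u_\infty$ of the original data. ``Direct $\mathbf{L}^1_{loc}$-continuity'' would require that $\overline u\mapsto\mathcal{S}^{AB}_T\overline u$ be continuous from the weak-$*$ topology on bounded sets of $\mathbf{L}^\infty$ into $\mathbf{L}^1_{loc}$, which is precisely the statement to be proved, not an input. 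The same remark applies to your invocation of ``continuous dependence of $\mathcal{S}^{AB}$ on the connection'': this is not assumed anywhere and must be established.

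The paper closes the gap differently and more directly, without ever re-verifying the conditions of Theorem~\ref{Teo1} for the limit. After extracting (as you do) $\overline u_{n_j}\stackrel{*}{\rightharpoonup}\overline u$, $(A_{n_j},B_{n_j})\to(\widetilde A,\widetilde B)$, and strong $\mathbf{L}^1_{loc}$ convergence of $\mathcal{S}^{A_{n_j}B_{n_j}}_t\overline u_{n_j}$ for every $t\in(0,T]$, one passes to the limit \emph{in the weak formulation and in all the entropy inequalities}: the Kru\v{z}kov inequalities on each half-line $\{x<0\}$ and $\{x>0\}$, and the adapted entropy inequality of Definition~\ref{def-ab-entr-sol}(iii), using that $k_{A_{n_j}B_{n_j}}\to k_{\widetilde A\widetilde B}$ in $\mathbf{L}^1_{loc}$. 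This shows the limit function is an $\widetilde A\widetilde B$-entropy solution of the Cauchy problem with initial datum $\overline u$ (the initial-data term survives thanks to the weak-$*$ convergence). Uniqueness of $\widetilde A\widetilde B$-entropy solutions (Theorem~\ref{semigroup-AB}) then forces the limit to equal $\mathcal{S}^{\widetilde A\widetilde B}_{(\cdot)}\overline u$, which simultaneously yields the identification at time $T$, the space-time compactness, and the stability with respect to the connection. In short: replace your second paragraph by ``pass to the limit in the entropy inequalities and invoke uniqueness''; the verification that the limit profile satisfies \eqref{c1}--\eqref{bound totale3} is neither needed nor sufficient.
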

\end{figure}

\noindent
\setcounter{rem}{2}
\begin{rem}
\label{main-thm-interpret1}
Notice that, by the strict convexity assumption (H1) on $f''_l, f''_r$,
and relying on~\eqref{c1}, \eqref{c4}, we deduce that the right-hand side of~\eqref{bound totale1}, 
\eqref{bound totale2}, \eqref{bound totale3} is always nonnegative, and it is bounded on
any set bounded away from $x=0$. Therefore, 
any $\omega\in  \mathcal{A}_1(T)\cup \mathcal{A}_2(T)\cup \mathcal{A}^{AB}_3(T)$  is an equivalence class of 
bounded measurable functions that haves finite total increasing variation 
(and hence finite total variation as well) on subsets of $\mathbb{R}$
bounded away from the origin.
Moreover, by assumption 
any \, $\omega\in  \mathcal{A}_1(T)\cup \mathcal{A}_2(T)\cup \mathcal{A}^{AB}_3(T)$ \,
admits one-sided
limits at $x=0$.
Hence, 
any element of $ \mathcal{A}_1(T)\cup \mathcal{A}_2(T)\cup \mathcal{A}^{AB}_3(T)$ admits one-sided limits at every point.
\end{rem}
\pagebreak

\medskip
In turn, the compactness of the attainable sets yields the existence of optimal solutions 
 for a class of minimization (maximization) problems, by considering a minimizing (maximizing)
 sequence for the corresponding cost functionals. 
\begin{cor}
\label{existence-opt-sol}
Let $G$ be multivalued map as in Theorem~\ref{Teo2} and assume that $G(x)=0$
for all $x\in\mathbb{R}\setminus K$, for some bounded set $K\subset\mathbb{R}$.
Given an  interval $I\subset\mathbb{R}$ and $T>0$,
let $F_1:L^1(I)\rightarrow \mathbb{R}$,  $F_2:L^1(I\times [0,T])\rightarrow \mathbb{R}$
be lower semicontinuous functionals, and let $\mathcal{U}$ be the set of admissible controls defined in~\eqref{insieme controlli}. Then, under the same assumptions of Theorem~\ref{Teo1},
the optimal control problems
\begin{equation}
\min_{\overline{u}\in\mathcal{U}}{F_1(S_T\overline{u}(\cdot))},
\qquad\quad
\min_{\overline{u}\in\mathcal{U}}{F_2(S_{(\cdot)}\overline{u}(\cdot))},
\end{equation}
admit a solution. 
If we assume that $F_1, F_2$ are upper semicontinuous functionals, then there exists a solution
of the maximization problems
\begin{equation}
\max_{\overline{u}\in\mathcal{U}}{F_1(S_T\overline{u}(\cdot))},
\qquad\quad
\max_{\overline{u}\in\mathcal{U}}{F_2(S_{(\cdot)}\overline{u}(\cdot))}.
\end{equation}
\end{cor}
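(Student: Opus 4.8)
The plan is to derive Corollary~\ref{existence-opt-sol} from the compactness statements of Theorem~\ref{Teo2} by the classical direct method of the calculus of variations. The only genuine point requiring care is that $I$ (and hence $I\times[0,T]$) need not be bounded, so that the $\mathbf{L}^1_{loc}$-compactness provided by Theorem~\ref{Teo2} must first be promoted to $\mathbf{L}^1$-compactness of the restrictions of the attainable sets to $I$; this is exactly where the hypothesis $G(x)=0$ for $x\notin K$ enters.

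First I would establish a uniform support estimate. Since $G$ is bounded and $\mathscr{C}$ is compact, there is a constant $M>0$ with $\|\overline u\|_{\mathbf{L}^\infty}\le M$ for every $\overline u\in\mathcal{U}$ and $|A|,|B|\le M$ for every $(A,B)\in\mathscr{C}$; by the $\mathbf{L}^\infty$-bounds for the semigroups $\mathcal{S}^{AB}$ (Theorem~\ref{semigroup-AB}) this yields $\|\mathcal{S}^{AB}_t\overline u\|_{\mathbf{L}^\infty}\le M'$ for all $t\in[0,T]$, uniformly in $\overline u\in\mathcal{U}$ and $(A,B)\in\mathscr{C}$, together with a uniform bound $\lambda\doteq\max\{|f_l'(v)|,|f_r'(v)|:|v|\le M'\}$ on the characteristic speeds. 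Since $\mathcal{S}^{AB}_t\overline u$ is, in particular, a Kru\v{z}kov solution of $u_t+f_l(u)_x=0$ on $(-\infty,0)\times(0,T)$ and of $u_t+f_r(u)_x=0$ on $(0,+\infty)\times(0,T)$, each of which has finite speed of propagation $\le\lambda$ and admits $u\equiv 0$ as a solution, the fact that $\overline u=0$ off the bounded set $K$ forces --- by the finite domain of dependence of Kru\v{z}kov solutions --- that $\mathcal{S}^{AB}_t\overline u(x)=0$ for all $t\in[0,T]$ and all $x$ outside a bounded set $\widetilde K=\widetilde K(K,\lambda,T)$, uniformly in $\overline u\in\mathcal{U}$ and $(A,B)\in\mathscr{C}$.

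Next I would upgrade the compactness accordingly. If $(\omega_n)_n$ is a sequence in $\mathcal{A}^{AB}(T,\mathcal{U})$ (respectively in $\mathcal{A}(T,\mathcal{U},\mathscr{C})$), Theorem~\ref{Teo2} provides a subsequence converging in $\mathbf{L}^1_{loc}(\mathbb{R})$ to an element $\omega^\star$ of the same set; since every $\omega_n$, and hence also $\omega^\star$, vanishes outside the bounded set $\widetilde K$, this convergence takes place in $\mathbf{L}^1(\mathbb{R})$, so the restriction to $I$ of $\mathcal{A}^{AB}(T,\mathcal{U})$ (respectively of $\mathcal{A}(T,\mathcal{U},\mathscr{C})$) is a non-empty compact subset of $\mathbf{L}^1(I)$. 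The same argument, applied to the sets in~\eqref{att-set-def3} with the $\mathbf{L}^1_{loc}(\mathbb{R}\times[0,T])$-compactness of Theorem~\ref{Teo2} and the uniform support in $\widetilde K\times[0,T]$, shows that the restrictions to $I\times[0,T]$ of $\mathcal{A}^{AB}(\mathcal{U})$ and of $\mathcal{A}(\mathcal{U},\mathscr{C})$ are non-empty compact subsets of $\mathbf{L}^1(I\times[0,T])$.

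Finally I would apply the direct method: a lower semicontinuous real-valued functional on a non-empty, sequentially compact metric space attains its minimum. Hence $F_1$ attains its minimum over the compact set $\{\mathcal{S}_T\overline u\,\big|_I:\overline u\in\mathcal{U}\}\subset\mathbf{L}^1(I)$ --- where $\mathcal{S}_T=\mathcal{S}^{AB}_T$ for a fixed connection, or $\mathcal{S}_T$ runs over $\{\mathcal{S}^{AB}_T:(A,B)\in\mathscr{C}\}$ when the connection is a control as well --- at some $\mathcal{S}_T\overline u^\star$ with $\overline u^\star\in\mathcal{U}$ (and the realizing connection, in the latter case); this $\overline u^\star$ solves the first minimization problem. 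Likewise $F_2$ attains its minimum over the corresponding compact subset of $\mathbf{L}^1(I\times[0,T])$, yielding a solution of the second one. For the maximization problems one applies the same reasoning to $-F_1$ and $-F_2$, which are lower semicontinuous whenever $F_1$ and $F_2$ are upper semicontinuous. I expect the passage from $\mathbf{L}^1_{loc}$- to $\mathbf{L}^1$-compactness of the third paragraph to be the only point needing attention; all the genuine analytic difficulty has already been absorbed into Theorem~\ref{Teo2}, so no further serious obstacle arises.
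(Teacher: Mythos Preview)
Your proof is correct and follows exactly the approach the paper intends: the paper states the corollary as an immediate consequence of Theorem~\ref{Teo2} via a minimizing (maximizing) sequence, and in the applications (Section~6) it carries out precisely your passage from $\mathbf{L}^1_{loc}$- to $\mathbf{L}^1$-compactness by invoking the uniform support bound of Remark~\ref{propert3-AB-sol}. The only cosmetic difference is that the paper cites Remark~\ref{propert3-AB-sol} (rather than Theorem~\ref{semigroup-AB}) for the $\mathbf{L}^\infty$-bound and the uniform-in-$\overline u$ support estimate, whereas you re-derive the latter directly via finite speed of propagation on each half-line; both routes are equivalent.
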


\section{Structure of $AB$-entropy solutions and a technical lemma}
\label{sec-struct-sol}
We analyze here some structural properties of $AB$-entropy solutions
and we derive a technical lemma 
on the relation between upper bounds on the Dini derivative and the monotonicity of suitable maps, that will be useful for the proofs
of Theorem~\ref{Teo1} and Theorem~\ref{Teo2}.

\setcounter{rem}{4}
\begin{rem}
\label{propert3-AB-sol}
By the analysis in~\cite[Section 3.1]{MR2291816}  it follows that  the Riemann solver 
associated to a given connection~$(A,B)$ enjoys the following properties. 
Letting $u(x,t)$ be the $AB$-entropy solution 
of the Cauchy problem for~\eqref{eq1},\eqref{flux}, with initial data 
$\overline u(x)= u^-$ if $x<0$, and $\overline u(x)= u^+$ if $x>0$, 
for any given
$a\leq \theta_r, b\geq \theta_l$, $a<b$, there holds
\begin{equation}
\label{linf-bound-1}
\{A,B,u^-,u^+\}\subseteq [a,b]
\quad \Longrightarrow\quad 
u(x,t)\in \big[\min\{a,\pi_{l,-}^r (a)\},\ \max\{b,\pi_{r,+}^l(b)\}\big]
\qquad \forall~x\in\mathbb{R},\ t\geq 0\,.
\end{equation}
Moreover, if $A,B\in [0,1]$, 
by the assumptions {\bf H1), H2)} on $f_l, f_r$, one has
\begin{equation}
\label{pi-maps-prop2}
\{A,B,u^-,u^+\}\subseteq [0,1]\quad \Longrightarrow\quad u(x,t)\in [0,1]
\qquad \forall~x\in\mathbb{R},\ t\geq 0\,.
\end{equation}
Observe that, if $u(x,t)$ is a front tracking solution (cfr.~\cite[Section 4]{MR2291816}) constructed with approximate Riemann
solvers 
that satisfy~\eqref{linf-bound-1}, \eqref{pi-maps-prop2},
then the same type of a-priori bounds hold.
In fact, $u$ can assume values which do not belong to the range of the initial data
$\overline u$
only on regions adjacent to the discontinuity $x=0$
(from the left or from the right), and such values always belong to the interval
$$\big[\inf\big\{\min\{\overline u(x),\pi_{l,-}^r (\overline u(x))\};\, x\in\mathbb{R}\big\}, \
\sup\big\{\max\{\overline u(x),\pi_{r,+}^l (\overline u(x))\};\, x\in\mathbb{R}\big\}\big].$$
Hence, since a general solution of a Cauchy problems for \eqref{eq1},\eqref{flux}
can be obtained as limit of front tracking solutions (see~\cite{MR2291816,MR1158825}),
we deduce the following
a-priori bounds for any $u(x,t)\doteq \mathcal{S}^{AB}_t \overline u(x)$,
with $\overline u\in {\bf L^\infty}(\mathbb{R})$:
\begin{equation}
\label{linf-bound-3}
\{A,B\}\cup\{\overline u(x);\ x \in \mathbb{R}\}
\subseteq [a,b]
\quad \Longrightarrow\quad 
u(x,t)\in \big[\min\{a,\pi_{l,-}^r (a)\},\ \max\{b,\pi_{r,+}^l(b)\}\big]
\qquad \forall~x\in\mathbb{R},\ t\geq 0\,,
\end{equation}
and
\begin{equation}
\label{pi-maps-prop4}
\{A,B\}\cup\{\overline u(x);\ x \in \mathbb{R}\} \subseteq [0,1]\quad \Longrightarrow\quad u(x,t)\in [0,1]
\qquad \forall~x\in\mathbb{R},\ t\geq 0\,.
\end{equation}
Moreover, if the initial data $\overline u$ vanishes outside a bounded set $K$, then there will be some 
bounded set $K'$ such that $\text{supp}(u(\cdot,t)) \subset K'$ for all $t>0$.
\end{rem}

\medskip

The classical theory of generalized characteristics for conservation laws with continuous 
and convex flux~\cite{MR0457947}
guarantees that backward characteristics, lying in the same quarter of plane $(-\infty,0]\times [0,+\infty)$
or $[0, +\infty)\times [0,+\infty)$,  never intersect
at times $t>0$ in points $x\neq 0$.
A fundamental feature of $AB$-entropy solutions is that backward generalized characteristics
cannot intersect at times $t>0$ even along the discontinuity interface $x=0$, unless 
$(u_l(s), u_r(s))=(A,B)$ for all $0<s \leq t$. It follows in particular that no rarefaction fan can be originated 
at $x=0$ and $t>0$.
This property is the consequence of the next
Proposition. We recall that a generalized characteristic $\xi(t), t\in (t', t'')$ 
for a conservation law $u_t+f(u)_x=0$ is called genuine
if, for almost every $t\in (t', t'')$, there holds $u(\xi(t)-,t)=u(\xi(t)+,t)=v$ for
some constant $v$ such that $f'(v)=\dot\xi$.
Thus, genuine  characteristics are segments of lines 
which may intersect only at their end points~\cite{MR0457947}.
%
%

\begin{prop}\label{Norarefactions}
Let $f$ be a flux as in~\eqref{flux} satisfying the assumptions {\bf H1), H2), H3)},
and let $u(x,t)$ be an $AB$-entropy solution of~\eqref{eq1},\eqref{flux}-\eqref{datum},
for some initial data $\overline u\in {\bf L}^\infty(\mathbb{R})$ and  a connection $(A,B)\in\mathscr{C}_f$.
Then, at any time $\overline t>0$ the following hold.
\begin{itemize}
\item[(i)]
If  $u_l(\,\overline t+)< \theta_l$ and $u_r(\,\overline t+)> \theta_r$, then $(u_l(t\pm), u_r(t\pm))= (A,B)$
for all $t\in (0,t^*)$, for some $t^*>\overline t$. Moreover,
there exist exactly two forward, genuine, characteristics $\eta', \eta''$, starting at $(0, \overline t\,)$,
which lie in $(-\infty,0)\times (\,\overline t,  t^*)$ and $(0,+\infty)\times (\,\overline t,  t^*)$, respectively.

\item[(ii)]
If  $u_l(\,\overline t+)\geq  \theta_l$ or $u_r(\,\overline t+)\leq \theta_r$, then 
there exists at most a single forward, genuine, characteristic starting at $(0, \overline t\,)$
and lying in $(\mathbb{R}\setminus \{0\})\times (\,\overline t,  t^*)$,
for some $t^*>\overline t$.
\end{itemize}
\end{prop}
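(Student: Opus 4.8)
The plan is to work within the framework of Dafermos' theory of generalized characteristics \cite{MR0457947}, applied separately on each half-plane $\{x<0\}$ and $\{x>0\}$, where $u$ is an entropy solution of the scalar conservation law with the (genuinely nonlinear, convex) flux $f_l$, respectively $f_r$. First I would recall that, since $f_{l,r}$ are strictly convex, through every point $(\,\overline x,\,\overline t\,)$ with $\overline x\neq 0$ there pass a unique forward characteristic and a funnel of backward characteristics bounded by the minimal and maximal ones, and that forward characteristics emanating from a point are genuine (line segments). The key structural input is the Rankine--Hugoniot condition \eqref{RH-i} together with the interface entropy condition \eqref{int-entr-cond3} from Remark~\ref{propert1-AB-sol}, which controls the admissible pairs $(u_l(t),u_r(t))$ of one-sided traces at $x=0$.

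For part (i), the hypothesis $u_l(\,\overline t+)<\theta_l$ and $u_r(\,\overline t+)>\theta_r$ means both traces lie on the ``wrong'' (characteristic-diverging) side of the minima: $f'_l(u_l(\,\overline t+))<0$ and $f'_r(u_r(\,\overline t+))>0$. The second implication in \eqref{int-entr-cond3} then forces $(u_l(\,\overline t+),u_r(\,\overline t+))=(A,B)$. The plan is then a continuity/openness argument: I would show that the set of times $t$ near $\overline t$ for which $u_l(t)<\theta_l$ and $u_r(t)>\theta_r$ (hence $(u_l(t),u_r(t))=(A,B)$) is relatively open to the right; equivalently, once the traces hit $(A,B)$ they stay there until some first time $t^*$ at which either $u_l$ reaches $\theta_l$ or $u_r$ reaches $\theta_r$. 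The mechanism is that, with $(u_l,u_r)\equiv(A,B)$ on an interval, the solution near $x=0^-$ is forced by the genuine backward characteristics carrying the value $A$ with negative speed $f'_l(A)$, and near $x=0^+$ by those carrying $B$ with positive speed $f'_r(B)$; these cannot be destroyed instantaneously because a change would require an incoming characteristic of the opposite sign, contradicting the one-sided Oleinik/Lax structure. The two forward genuine characteristics $\eta',\eta''$ are then precisely the extremal forward characteristics of the constant states $A$ (into $x<0$, with slope $f'_l(A)<0$) and $B$ (into $x>0$, with slope $f'_r(B)>0$) issuing from $(0,\overline t\,)$; their genuineness follows from the fact that they border a region where $u\equiv A$, resp. $u\equiv B$.

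For part (ii), the hypothesis $u_l(\,\overline t+)\geq\theta_l$ or $u_r(\,\overline t+)\leq\theta_r$ means at least one trace sits on the increasing (compressive) branch: $f'_l(u_l(\,\overline t+))\geq 0$ or $f'_r(u_r(\,\overline t+))\leq 0$. I would argue that a forward genuine characteristic issuing from $(0,\overline t\,)$ into $\{x<0\}$ must be a line of slope $f'_l(v)$ for a value $v$ with $f'_l(v)\leq 0$ (otherwise it immediately leaves the quarter-plane), and similarly into $\{x>0\}$ it needs slope $f'_r(w)\geq 0$; moreover genuineness at $x=0$ requires $v=u_l(\,\overline t+)$, resp. $w=u_r(\,\overline t+)$, by the standard characterization that a genuine characteristic carries the one-sided trace value. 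Under the stated hypothesis at least one of these is impossible, and if the other is possible it is unique (the slope is pinned down by the single admissible trace value), giving ``at most a single'' forward genuine characteristic. The number $t^*>\overline t$ is obtained, as in (i), as a first exit time controlled by the continuity in $t$ of the traces $u_{l,r}(t)$ (which exist as strong traces by Remark~\ref{propert1-AB-sol}).

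The main obstacle I anticipate is making rigorous the ``persistence'' claim in (i) and the exit-time constructions: one must rule out that the pair of traces oscillates across the thresholds $\theta_l,\theta_r$ on a set of times accumulating at $\overline t$, and one must produce a genuine lower bound $t^*-\overline t>0$. This requires combining the almost-everywhere validity of \eqref{int-entr-cond3} with the fact that $t\mapsto u(\cdot,t)$ is $\mathbf{L}^1_{loc}$-continuous and that $u(\cdot,t)$ has strong traces, plus the convexity-driven Oleinik decay estimate to propagate the constancy of $u$ along the characteristic strips adjacent to $x=0$; the delicate point is that for a critical connection ($A=\theta_l$ or $B=\theta_r$) the total variation may blow up near $x=0$, so the argument must be phrased purely in terms of characteristics and traces rather than $BV$ bounds near the interface. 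Once this persistence and exit-time machinery is in place, the identification of $\eta',\eta''$ as the extremal characteristics of the constant states $A$ and $B$, and the uniqueness in (ii), follow from the standard Dafermos structure theory applied on each half-plane.
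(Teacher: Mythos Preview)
Your framework matches the paper's, but the argument for (ii) has a real gap. The step ``genuineness at $x=0$ requires $v=u_l(\,\overline t+)$, resp.\ $w=u_r(\,\overline t+)$'' is precisely what must be proved: if a rarefaction fan emanated from $(0,\overline t\,)$ into, say, $\{x>0\}$, then every ray in the fan would be a forward genuine characteristic carrying its own value $w$, and only the innermost one (of slope $0$) would match $u_r(\,\overline t+)=\theta_r$; your argument assumes this configuration away rather than excluding it, so it is circular. The paper closes this by an exhaustive case analysis on the \emph{four} traces $u(0\pm,\overline t\,)$ and $u_{l,r}(\,\overline t\pm)$: tracing backward genuine characteristics through sequences $(x_n,\overline t\,)$ with $x_n\to 0^\pm$ and through $(0,t_n)$ with $t_n\to\overline t^\pm$ forces equalities such as $u(0-,\overline t\,)=u_l(\,\overline t+)$, or else produces a shock of determined sign arriving at or leaving $(0,\overline t\,)$; Lax admissibility of that shock together with \eqref{int-entr-cond3} then eliminates, one by one, every configuration that would allow more than one forward genuine characteristic. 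Your one-line ``standard characterization'' has to be replaced by this matching of spatial and temporal traces.

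For (i) there is a second gap: the statement asserts $(u_l,u_r)\equiv(A,B)$ on all of $(0,t^*)$, hence in particular for every $t<\overline t$ down to $0$, whereas your openness argument is only forward in time. The paper first shows $u(0-,\overline t\,)=A$ and $u(0+,\overline t\,)=B$ by the same shock-elimination technique (ruling out $u(0-,\overline t\,)<A$ and $u(0-,\overline t\,)>A$ separately, Cases~2a--2b), then traces backward characteristics from $(x_n,\overline t\,)$, $x_n\to 0^\pm$, into times $t<\overline t$ to get $(u_l,u_r)=(A,B)$ on a left neighborhood of $\overline t$, and finally sets $\tau=\inf\{t':\ (u_l,u_r)\equiv(A,B)\ \text{on}\ (t',\overline t\,)\}$ and derives $\tau=0$ by contradiction (Case~2c). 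Your continuity/openness sketch supplies none of these steps.
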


\begin{proof}
We shall distinguish three cases.\\
\vspace{-7pt}

\noindent
{\bf Case 1.}
{\it  $u_l(\,\overline t+)>\theta_l$ and $u_r(\,\overline t+)\neq  \theta_r$, \ or \ $u_r(\,\overline t+)<  \theta_r$
and $u_l(\,\overline t+)\neq \theta_l$.}\\
{\bf 1a)} If $u_l(\,\overline t+)>\theta_l$, $u_r(\,\overline t+)>  \theta_r$
and $u(0+,\overline t\,)\geq u_r(\,\overline t+)$ (see Figure~\ref{figure 3}), then 
consider two sequences of points $\{t_n, \, t_n \downarrow 0\}$,
and $\{(x_n,\overline t\,), \, x_n\downarrow 0\}$,
of continuity for $u_r$ and $u$, respectively.
Tracing the backward genuine characteristics
(with positive slopes)
through $(0,t_n)$ and $(x_n,\overline t\,)$
one  deduces that there exist sequences of points 
$\{(x'_n,\overline t\,), \, x'_n\uparrow 0\}$, and $\{t'_n, \, t'_n \uparrow 0\}$,
such that $u(x'_n,\overline t\,)\to u_l(\,\overline t+)$ and $u_r(t'_n)\to u(0+,\overline t\,)$.
Hence, there holds
$u(0-,\overline t\,)=u_l(\,\overline t+)$, $u_r(\,\overline t-)=u(0+,\overline t\,)$.
Now observe that, if $u_l(\,\overline t-)\neq u(0-,\overline t\,)$,
then there should be
a shock with positive slope 
arriving in $(0,\overline t\,)$ (or generated in $(0,\overline t\,)$) and
connecting the left state~$u(0-,\overline t\,)$ 
with the right state $u_l(\,\overline t-)$.
Such a shock is entropy admissible for the conservation law with flux~$f_l$
and has positive slope if and only if $u_l(\,\overline t-)< u(0-,\overline t\,)$
and $f_l(u_l(\,\overline t-))<f_l(u(0-,\overline t\,))$.
Since by~\eqref{int-entr-cond3} 
one has $f_l(u_l(\,\overline t-))=f_r(u_r(\,\overline t-))$,
and because of $u(0-,\overline t\,)=u_l(\,\overline t+)$,
from $f_l(u_l(\,\overline t-))<f_l(u(0-,\overline t\,))$
it follows that $f_r(u_r(\,\overline t-))<f_l(u_l(\,\overline t+))$.
On the other hand, $\theta_r<u_r(\,\overline t+)\leq u(0+,\overline t\,)=u_r(\,\overline t-)$
implies $f_r(u_r(\,\overline t+))\leq f_r(u_r(\,\overline t-))$
which is in contrast with $f_r(u_r(\,\overline t-))<f_l(u_l(\,\overline t+))$.
Therefore, $u_l(\,\overline t+)>\theta_l$, $u_r(\,\overline t+)>  \theta_r$
and $u(0+,\overline t)\geq u_r(\,\overline t+)$,
together imply that $u_l(\,\overline t-)= u(0-,\overline t\,)=u_l(\,\overline t+)$.
Moreover, since by~\eqref{int-entr-cond3}
one has $f_l(u_l(\,\overline t+))=f_r(u_r(\,\overline t+))$,
from $\theta_r<u_r(\,\overline t+)\leq u(0+,\overline t\,)=u_r(\,\overline t-)$
it follows that $f_r(u_r(\,\overline t+))\leq f_r(u_r(\,\overline t-))\leq 
f_r(u_r(\,\overline t+))$. Hence, 
$u_l(\,\overline t+)>\theta_l$, $u(0+,\overline t\,)\geq u_r(\,\overline t+)>  \theta_r$
together imply that $u_l(\,\overline t\pm)= u(0-,\overline t\,)$
and  $u_r(\,\overline t\pm)= u(0+,\overline t\,)$,
which shows that from $(0,\overline t\,)$
it emerges a single forward genuine characteristic, lying on 
$(0,+\infty)\times (\,\overline t,  t^*)$, for some $t^*>\overline t$,
and property (ii) is verified.
\\
{\bf 1b)} If $u_l(\,\overline t+)>\theta_l$
and $\theta_r\leq u(0+,\overline t\,)< u_r(\,\overline t+)$, then  there is
a shock with positive slope 
starting at $(0,\overline t\,)$  and
connecting the left state $u_r(\,\overline t+)$ 
with the right state $u(0+,\overline t\,)$. Moreover, tracing 
the backward genuine characteristics
through a sequence of points $(x_n,\overline t\,), \, x_n \uparrow 0$,
of continuity for $u$,
one  deduces that $u_r(\,\overline t-)=u(0+,\overline t\,)$.
Hence, by~\eqref{int-entr-cond3} 
one has $f_l(u_l(\,\overline t-))=f_r(u_r(\,\overline t-))<f_r(u_r(\,\overline t+))=
f_l(u_l(\,\overline t+))$.
On the other hand, by the observations in case {\bf 1a)} it follows that 
$u(0-,\overline t\,)=u_l(\,\overline t+)$, which implies $f_l(u(0-,\overline t\,))>f_l(u_l(\,\overline t-))$.
Thus, it must be $u(0-,\overline t\,)>u_l(\,\overline t-)$, 
and there is a shock with positive slope arriving at $(0,\overline t\,)$
(or generated in~$(0,\overline t\,)$) connecting the left state $u(0-,\overline t\,)$
with the right state $u_l(\,\overline t-)\in\{\pi_{l,-}^r(u(0+,\overline t\,)),\pi_{l,+}^r(u(0+,\overline t\,))\}$.
Therefore, if $u_l(\,\overline t+)>\theta_l$
and $\theta_r\leq u(0+,\overline t\,)< u_r(\,\overline t+)$, then there is no forward, genuine characteristic, 
emerging from~$(0, \overline t\,)$, there is a single (forward) shock starting at $(0, \overline t\,)$
with positive slope, and property (ii) is verified.
\\
{\bf 1c)} If $u_l(\,\overline t+)>\theta_l$
and $u(0+,\overline t\,)<\theta_r<u_r(\,\overline t+)$, then
with similar arguments to case {\bf 1b)} one deduces that: \\
- there is 
a shock with positive slope 
starting at $(0,\overline t\,)$  and
connecting the left state $u_r(\,\overline t+)$ 
with the right state $u(0+,\overline t)>\pi_{r,-}(u_r(\,\overline t+))$;\\
- there is 
a shock with positive slope arriving at $(0,\overline t\,)$
(or generated in $(0,\overline t\,)$) connecting the left state $u(0-,\overline t\,)=u_l(\,\overline t+)$
with the right state $u_l(\,\overline t-)\in\{\pi_{l,-}^r(u_r(\,\overline t-)),\pi_{l,+}^r(u_r(\,\overline t-))\}$;\\
- either $u_r(\,\overline t-)=u(0+,\overline t\,)$, or $u_r(\,\overline t-)>u(0+,\overline t\,)$,
and in this latter case there is a shock with negative slope arriving at $(0,\overline t\,)$
(or generated in $(0,\overline t\,)$) that connects the left state $u_r(\,\overline t-)\in(u(0+,\overline t\,), \pi_{r,+}(u(0+,\overline t\,)))$
with the right state $u(0+,\overline t\,)$.
\\
Therefore, if $u_l(\,\overline t+)>\theta_l$
and $u(0+,\overline t\,)<\theta_r<u_r(\,\overline t+)$, then 
as in case {\bf 1b)} there is no forward, genuine, characteristic
emerging from $(0, \overline t\,)$, while there is a single (forward) shock starting at $(0, \overline t\,)$,
which has negative slope. Hence, property (ii) is verified.\\
{\bf 1d)} If $u_l(\,\overline t+)<\theta_l$ and $u_r(\,\overline t+)<  \theta_r$, then 
we can proceed as in cases {\bf 1a)-1c)}
to conclude that:
either $u_l(\,\overline t\pm)= u(0-,\overline t\,)$,  $u_r(\,\overline t\pm)= u(0+,\overline t\,)$,
and
it emerges a single forward genuine characteristic, lying on 
$(-\infty,0)\times (\,\overline t,  t^*)$, for some $t^*>\overline t$,
or $u_l(\,\overline t+)< u(0-,\overline t\,)$, $u_r(\,\overline t+)= u(0+,\overline t\,)$,
and there is no forward, genuine characteristic, 
emerging from $(0, \overline t\,)$, while there is a single (forward) shock starting at $(0, \overline t\,)$,
which has  negative slope.
Thus, property (ii) is verified.\\
{\bf 1e)} If $u_l(\,\overline t+)>\theta_l$ and $u_r(\,\overline t+)<  \theta_r$, then
with the same arguments as above we deduce that $u(0-,\overline t\,)=u_l(\,\overline t+)$,
$u(0+,\overline t\,)=u_r(\,\overline t+)$, and
by~\eqref{int-entr-cond3}
one of the following subcases occurs:\\
- $u_l(\,\overline t+)=u_l(\,\overline t-)$, $u_r(\,\overline t+)=u_r(\,\overline t-)$, and 
in a neighbourhood of $\overline t$ the characteristics 
are crossing the line $x=0$ with positive slopes on the left side, 
with negative slopes on the right side;\\
- $u_l(\,\overline t-)\leq \theta_l<u_l(\,\overline t+)$, $u_r(\,\overline t+)<u_r(\,\overline t-)\leq \theta_r$,
there is a shock with positive slope arriving
at $(\,\overline t,0)$ (or generated in $(0,\overline t\,)$),
which connects the left state $u(0-,\overline t\,)=u_l(\,\overline t+)$
with the right state $u_l(\,\overline t-)$, there is a shock with negative slope connecting the
left state $u_r(\,\overline t-)=\pi_{r,-}^l(u_l(\,\overline t-))$
with the right state $u(0+,\overline t\,)=u_r(\,\overline t+)$,
and in a left neighbourhood of $\overline t$ the characteristics 
are crossing the line $x=0$ with negative slopes on both sides;\\ 
- $\theta_l<u_l(\,\overline t-)=A<u_l(\,\overline t+)$, $u_r(\,\overline t+)<u_r(\,\overline t-)=B<\theta_r$,
there are two shocks with positive and negative slopes arriving at $(0,\overline t\,)$
as in the previous case,
and in a left neighbourhood of $\overline t$ the characteristics 
are crossing the line $x=0$ with  with positive slopes on the left side, 
with negative slopes on the right side;\\
- $\theta_l<u_l(\,\overline t-)<u_l(\,\overline t+)$, $u_r(\,\overline t+)<\theta_r\leq u_r(\,\overline t-)$,
there are two shocks with positive and negative slopes arriving at $(0,\overline t\,)$
as in the previous case,
and in a left neighbourhood of $\overline t$ the characteristics 
are crossing the line $x=0$  with positive slopes on both sides.\\
In all subcases of {\bf 1e)} there is no forward characteristic emerging from $(0, \overline t\,)$
and hence  property (ii) is verified.
\vspace{-5pt}
\begin{figure}[!htbp] \label{figure 3}
\begin{center}

\begin{tikzpicture}[scale=1.2,cap=round]
\title{case 1}
 \def\costhirty{0.8660256}

\tikzstyle{axes}=[]
  \tikzstyle{important line}=[very thick]
  \tikzstyle{information text}=[rounded corners,fill=red!10,inner sep=1ex]

\draw[thick,->] (-2.2,-0.2) -- (2.2,-0.2) node[right] {$x$};
\draw[thick,->] (0,-0.2) -- (0,2) node[right] {$t$};

\draw [domain=-2.2:2.2] plot (\x, {0.85});

\draw [thick] (0,0.85) -- (1.1,1.9);
\draw [dashed] (0,0.85) -- (1,1.9);
\draw [dashed] (0,0.85) -- (0.9,1.9);
\draw [dashed] (0,0.85) -- (0.8,1.9);
\draw [dashed] (0,0.85) -- (0.7,1.9);
\draw [thick] (0,0.85) -- (0.6,1.9); 

\draw (0,0.99) node[anchor=east] {$\bar{t}$};
\draw (0.9,1.58) -- (0.9,1.62);

\draw [thick, blue] (1.2,1.9) -- (0,0.75) -- (- 0.55,-0.2);

\draw [thick, red] (0.5,1.9) -- (0,0.97) -- (-0.25,-0.2); 

\end{tikzpicture}
\quad
\begin{tikzpicture}[scale=1.2,cap=round]
 \def\costhirty{0.8660256}

\tikzstyle{axes}=[]
  \tikzstyle{important line}=[very thick]
  \tikzstyle{information text}=[rounded corners,fill=red!10,inner sep=1ex]

\draw[thick,->] (-2.2,-0.2) -- (2.2,-0.2) node[right] {$x$};
\draw[thick,->] (0,-0.2) -- (0,2) node[right] {$t$};

\draw [domain=-2.2:2.2] plot (\x, {0.85});

\draw [thick] (0,0.85) -- (1.1,1.9);
\draw [dashed] (0,0.85) -- (1,1.9);
\draw [dashed] (0,0.85) -- (0.9,1.9);
\draw [dashed] (0,0.85) -- (0.8,1.9);
\draw [dashed] (0,0.85) -- (0.7,1.9);
\draw [thick] (0,0.85) -- (0.6,1.9); 

\draw (0,0.99) node[anchor=east] {$\bar{t}$};
\draw (0.9,1.58) -- (0.9,1.62);

\draw [thick, blue] (0,0.75) -- (- 0.55,-0.2);
\draw [thick,red] (-0.25,0.85) -- (-0.1,-0.2);

\end{tikzpicture}
\caption{On the left \text{case 1a}, on the right \text{case 2a}}
\end{center}
\end{figure}
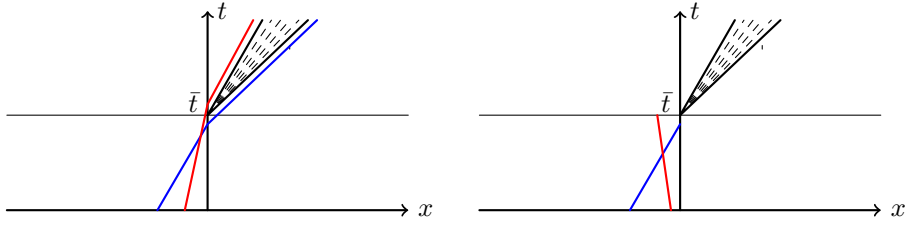

\vspace{-5pt}

\noindent
{\bf Case 2.}
{\it  $u_l(\,\overline t+)<\theta_l$ \ and \ $u_r(\,\overline t+)>  \theta_r$.}\\
Let $t^*>\overline t$ be such that $u_l(t)< \theta_l$ and $u_r(t)> \theta_r$
for all $t\in (\overline t, t^*)$.
Then, by~\eqref{int-entr-cond3}
this implies that $(u_l(t), u_r(t))=(A,B)$
for all $t\in (\overline t, t^*)$, with $A<\theta_l$, $B>\theta_r$,
and hence there holds $(u_l(\,\overline t+), u_r(\,\overline t+)) = (A,B)$.
\\
{\bf 2a)} If $u(0-,\overline t\,)<A$, then 
tracing the backward genuine characteristics
(with negative slopes)
through a sequence of points $(x_n,\overline t\,), \, x_n \uparrow 0$,
of continuity for $u$,
one  deduces that
$u_l(\overline t-)=u(0-,\overline t\,)$.
Hence, $u_l(\overline t-)<A$ and $f_l(u_l(\overline t-))>f(A)$.
By~\eqref{int-entr-cond3}
this implies that 
$u_r(\overline t-)<\pi_{r,-}(B)$.
Observe that if $u(0+,\overline t\,)< B$, then
a shock with positive slope should emerge from $(0,\overline t\,)$,
with left state $u_r(\,\overline t+)=B$ and right state $u(0+,\overline t\,)$.
But, this implies that $u(0+,\overline t\,)>\pi_{r,-}(B)$.
On the other hand, from $u_r(\overline t-)\neq u(0+,\overline t\,)$
it follows that there should be
a shock with negative slope 
arriving in $(0,\overline t\,)$ (or generated in $(0,\overline t\,)$) and
connecting the left state $u_r(\,\overline t-)$ 
with the right state $u(0+,\overline t\,)>u_r(\,\overline t-)$,
which is not entropy admissible for the conservation law with flux $f_r$.
Therefore, if 
$u_l(\,\overline t+)<\theta_l$, $u_r(\,\overline t+)>  \theta_r$
and $u(0-,\overline t\,)<A$, then it must be $u(0+,\overline t\,)\geq B$
(see Figure~\ref{figure 3}).
Hence, 
tracing the backward genuine characteristics
(with positive slopes)
through a sequence of points $(x_n,\overline t\,), \, x_n \downarrow 0$,
of continuity for $u$,
one  deduces that
$u_r(\overline t-)=u(0+,\overline t\,)\geq B$,
which is in contrast
with $u_r(\overline t-)<\pi_{r,-}(B)$.
Therefore, $u_l(\,\overline t+)<\theta_l$ and $u_r(\,\overline t+)>  \theta_r$
together imply $u(0-,\overline t\,)\geq A$.
\\
{\bf 2b)} If $u(0-,\overline t\,)>A$, then there should be a shock with negative slope connecting the left state $u(0-,\overline t\,)$ with the right state $u_l(\,\overline t+)=A$ emerging at $(0,\overline t\,)$.
This implies that $u(0-,\overline t\,)<\pi_{l,+}(A)$.
On the other hand, if $u(0-,\overline t\,)\leq \theta_l$ then
tracing the backward genuine characteristics
(with negative slopes)
through a sequence of points $(x_n,\overline t\,), \, x_n \uparrow 0$,
of continuity for $u$,
one deduces that  $u_l(\,\overline t-)=u(0-,\overline t\,)\in (A,\pi_{l,+}(A))$.
This implies that $f_l(u_l(\,\overline t-))<f_l(A)$, which is in contrast with~\eqref{int-entr-cond3}.
Hence,  if $u(0-,\overline t\,)>A$, then it must be $u(0-,\overline t\,)\in (\theta_l, \pi_{l,+}(A))$. 
However, by~\eqref{int-entr-cond3}
we have 
$u_l(\,\overline t-)\in (-\infty, A]\cup [\pi_{l,+}(A),+\infty)$,
which implies 
$u_l(\,\overline t-)\notin
(\theta_l, \pi_{l,+}(A))$. Thus, 
there should be
a shock with positive slope 
arriving in $(0,\overline t\,)$ (or generated in $(0,\overline t\,)$) and
connecting the left state $u(0-,\overline t\,)\in (\theta_l, \pi_{l,+}(A))$ 
with the right state $u_l(\,\overline t-)\in (-\infty, A]\cup [\pi_{l,+}(A),+\infty)$,
which is not entropy admissible for the conservation law with flux $f_l$.
Therefore, $u_l(\,\overline t+)<\theta_l$ and $u_r(\,\overline t+)>  \theta_r$
together imply $u(0-,\overline t\,)= A$,
and with the same arguments we deduce also that $u(0+,\overline t\,)= B$.
\\
{\bf 2c)} If $u(0-,\overline t\,)=A<\theta_l$ and $u(0+,\overline t\,)= B>\theta_r$, then 
tracing the backward genuine characteristics
through two sequences of points $(x_n,\overline t\,), \, x_n \uparrow 0$,
and $(x_n,\overline t\,), \, x_n \downarrow 0$
(having negative and positive slopes, respectively),
one  deduces that there exists $t'<\overline t$ such that
$u_l(t\pm)=A$ and $u_r(t\pm)=B$ for all $t\in (t', \overline t\,)$.
Then, set $\tau\doteq \inf\big\{t'<\overline t;\ u_l(s\pm)=A<\theta_l,\ u_r(s\pm)=B>\theta_r\ \ \forall~s\in(t',\overline t\,)\big\}$. If $\tau>0$, since one has $u_l(\tau+)=A,$ $u_r(\tau+)=B$, repeating the above arguments
of cases {\bf 2a)-2b)} one would deduce that $u_l(t\pm)=A$, $u_r(t\pm)=B$ for all $t\in (t'', \tau)$,
for some $t''<\tau$, which is in contrast with the definition of $\tau$.
Therefore it must be $\tau=0$. On the other hand,  $u_l(\,\overline t+)=A$, $u_r(\,\overline t+)=B$
clearly imply that  $u_l(t\pm)=A$, $u_r(t\pm)=B$ for all $t\in (\,\overline t, t^*)$,
for some $t^*>\overline t$. Thus, one has  $u_l(t\pm)=A$, $u_r(t\pm)=B$ for all $t\in (0, t^*)$
and at any point $(0,t)$, $t\in (0, t^*)$ starts exactly two forward, genuine characteristics $\eta', \eta''$,
which lie in $(-\infty,0)\times (t,  t^*)$ and $(0,+\infty)\times (t,  t^*)$, respectively,
proving property~(i). 
\smallskip

\noindent
{\bf Case 3.}
{\it  $u_l(\,\overline t+)=\theta_l$ \ or \ $u_r(\,\overline t+)=  \theta_r$.}\\
Notice that, by~\eqref{int-entr-cond3}
$u_l(\,\overline t+)=\theta_l$
implies $\theta_l =A$, while
$u_r(\,\overline t+)=  \theta_r$
implies $\theta_r=B$.\\
{\bf 3a)} If $u_l(\,\overline t+)=\theta_l$ and $u(0-,\overline t\,)=\theta_l$, then 
tracing the backward genuine characteristics
through a sequence of points $(x_n,\overline t\,), \, x_n \uparrow 0$,
of continuity for $u$,
one  deduces that $u_l(t)=\theta_l$ for all $t\in (0,\overline t\,)$.
Hence, $u_l(\,\overline t-)=\theta_l$ as well.
In turn, by~\eqref{int-entr-cond3}
this implies that $u_l(\,\overline t\pm)=A$,
$u_r(\,\overline t\pm)\in\{B, \pi_{r,-}(B)\}$.
Suppose that $u_r(\,\overline t+)=B$ and  $u(0+,\overline t\,)<\theta_r$. 
Since by Definition~\ref{int-conn-def} we have $B\geq \theta_r$, it follows that a
shock with positive slope emerges from $(0,\overline t\,)$, and thus 
$u(0+,\overline t\,)>\pi_{r,-}(B)$. 
However, $u_r(\,\overline t-)\in\{B,\pi_{r,-}(B)\}$ 
and $\pi_{r,-}(B)<u(0+,\overline t\,)<B$
imply that there should be
a shock with negative slope 
arriving in $(0,\overline t\,)$ (or generated in $(0,\overline t\,)$) and
connecting the left state $u_r(\,\overline t-)\in\{B, \pi_{r,-}(B)\}$ 
with the right state $u(0+,\overline t\,)\in (\pi_{r,-}(B), B)$, 
which is not entropy admissible for the conservation law with flux $f_r$.
Therefore, if $u_r(\,\overline t+)=B$, then it must be  $u(0+,\overline t\,)\geq \theta_r$. 
Then, tracing the backward genuine characteristics
through a sequence of points $(x_n,\overline t\,), \, x_n \downarrow 0$,
of continuity for $u$,
one  deduces that $u_r(\,\overline t-)\geq\theta_r$. Since 
$u_r(\,\overline t-)\in\{B,\pi_{r,-}(B)\}$, this implies that $u_r(\,\overline t-)=B$.
By similar arguments we deduce that, if $u_r(\,\overline t+)=\pi_{r,-}(B)$,
then also $u_r(\,\overline t-)=\pi_{r,-}(B)$.
Therefore, if $u_l(\,\overline t+)=\theta_l$ and $u(0-,\overline t\,)=\theta_l$, 
it follows that $\overline t$ is a point of continuity for $u_l$ and $u_r$,
$u_l(\,\overline t\pm)=A=\theta_l$, 
and $u_r(\,\overline t\pm)=B$ or $u_r(\,\overline t\pm)=\pi_{r,-}(B)$.
This implies that there is no forward genuine characteristic starting from $(0,\overline t\,)$
and lying on $(-\infty,0)\times (0,+\infty)$, while there is a single 
forward genuine characteristic starting from $(0,\overline t\,)$
and lying on $(0, +\infty)\times (0,+\infty)$, 
which proves the  property (ii).\\
{\bf 3b)} Next, assume that $u_l(\,\overline t+)=\theta_l$ and $u(0-,\overline t\,)> \theta_l$.
Then,  
there should be a shock with negative slope connecting the left state $u(0-,\overline t\,)$
with the right state $\theta_l$ emerging at $(0,\overline t\,)$,
which is not possible since any entropy admissible shock with right state $\theta_l$
has positive slope.
Therefore, $u_l(\,\overline t+)=\theta_l$ implies that $u(0-,\overline t\,)\leq \theta_l$.
\\
{\bf 3c)} Assume now  that $u_l(\,\overline t+)=\theta_l>u(0-,\overline t\,)$.
Then, tracing the backward genuine characteristics (with negative slopes)
through a sequence of points of continuity for $u$
as above,  $(x_n,\overline t\,), \, x_n \uparrow 0$,
we deduce that $u_l(\,\overline t-)=u(0-,\overline t\,)$.
Since $\theta_l =A\neq u_l(\,\overline t-)$,
by~\eqref{int-entr-cond3}
this implies that $u_r(\,\overline t-)\leq  \pi_{r,-}^l(A)=\pi_{r,-}(B)$.
On the other hand, by the same observations in case {\bf 1a)}
we know that 
$u_r(\,\overline t+)\in\{B, \pi_{r,-}(B)\}$.
Moreover, with similar arguments of case {\bf 1a)}
we deduce that
$u_r(\,\overline t+)=B$ and $u_r(\,\overline t-)\leq \pi_{r,-}(B)$
imply  $u(0+,\overline t\,)\geq \theta_r$,
and $u_r(\,\overline t-)=B$.
Next, assume that $u_r(\,\overline t-)\leq \pi_{r,-}(B)$, $u_r(\,\overline t+)=\pi_{r,-}(B)$. 
Again with similar arguments as above
we deduce that $u_r(\,\overline t+)=\pi_{r,-}(B)$ implies $u(0+,\overline t\,)=\pi_{r,-}(B)$,
and that there is no entropy admissible shock connecting a left state $u_r(\,\overline t-)<  \pi_{r,-}(B)$
with a right state $\pi_{r,-}(B)$.  Hence, if $u_r(\,\overline t-)\leq  \pi_{r,-}(B)$, $u_r(\,\overline t+)=\pi_{r,-}(B)$,
it must be $u_r(\,\overline t-)= \pi_{r,-}(B)$.
In turn, because of~\eqref{int-entr-cond3}
and since $u_l(\,\overline t-)<\theta_l$, this implies that $u_l(\,\overline t-)=A$,
which is in contrast with $u_l(\,\overline t-)=u(0-,\overline t\,)<\theta_l=A$.\\
Therefore, $u_l(\,\overline t+)=\theta_l$
implies that $u(0-,\overline t\,)=\theta_l$ as well, which are the assumptions of case  {\bf 1a)},
and thus property (ii) is verified.
Moreover, one has $u_r(\,\overline t\pm)=u(0+,\overline t\,)\in\{B,\pi_{r,-}(B)\}$.
With similar arguments we deduce that $u_r(\,\overline t+)=  \theta_r$
implies $u(0+,\overline t\,)=\theta_r$, $u(0-,\overline t\,)=u_l(\,\overline t\pm)
\in\{A,\pi_{l,+}(A)\}$,
and
then the same conclusions of the case $u_l(\,\overline t+)=\theta_l$ hold true.
This completes the proof of the Proposition. 
\end{proof}

\begin{rem}
\label{charact-lrtraces-admissible}
By the analysis of Proposition~\ref{Norarefactions}
it follows that, if $u_l\doteq u_l(\,\overline t\,), u_r\doteq u_r(\,\overline t\,)$, are the
one-sided limits~\eqref{traces} at $x=0$, and $\overline t>0$, of an $AB$-entropy solution,
then either $(u_l,u_r)=(A,B)$, or there exists a backward characteristic through $(0,\overline t\,)$, defined 
on $[0,\overline t\,]$, and taking values in $\mathbb{R}\setminus\{0\}$ at any time $t<\overline t$.
In this latter case,  consider the minimal and maximal backward characteristics $\xi_-, \xi_+$  
through $(0,\overline t\,)$, defined 
on~$[0,\overline t\,]$, and taking values in $\mathbb{R}$.
By the proof of Proposition~\ref{Norarefactions},
and recalling the definition~\eqref{lrtraces-admissible-def},
we deduce that
one of the following cases occurs:
\vspace{-3pt}
\begin{itemize}
\item[1.] $\xi_{\pm}(0)<0$ and $(u_l,u_r)\in\mathcal{T}_1$\,;
\vspace{-5pt}
\item[2.] $\xi_{\pm}(0)>0$ and $(u_l,u_r)\in\mathcal{T}_2$\,;
\vspace{-5pt}
\item[3.] $\xi_-(0)<\xi_+(0)=0$, or $\xi_-(0)=0<\xi_+(0)$, or $\xi_-(0)<0<\xi_+(0)$,
and $(u_l,u_r)\in\mathcal{T}_3$\,.
\end{itemize}
\end{rem}
\medskip

\noindent
The next result shows that the upper bounds on the
Dini derivative of a function $\omega\in \mathcal{A}_i(T)$, $i=1,2,3$,
given in~\eqref{bound totale1}, \eqref{bound totale2}, \eqref{bound totale3}, 
are equivalent to the monotonicity of the maps $\varphi_i$
that associates to any $x\neq 0$,
 the starting point $\varphi_i(x)$ at time $t=0$ of a characteristic that 
 reaches $x$ at time $T$.

\begin{lem}\label{lemma bound}
Let $\omega:\mathbb{R}\rightarrow\mathbb{R}$  be a bounded 
function having right and left limits in any point. Then, the following hold.
\vspace{-5pt}
\begin{itemize}
\item[(i)]
If  $\big(\omega(0-),\, \omega(0+)\big)\in \mathcal{T}_1$\,,
and $\omega$ satisfies~\eqref{c1}, then~\eqref{bound totale1} holds
if and only if the function
\vspace{-5pt}
\begin{equation}
\varphi_1 (x):=\begin{cases}
x -f'_{l}(\omega(x))\cdot T & \text{ if }  x<0,\\
-f'_{l}\circ f_{l,+}^{-1}\circ f_{r}(\omega(x))\cdot\big(T-{x}/{f'_{r}(\omega(x))}\big) & \text{ if } 0<x<R,\\
x -f'_{r}(\omega(x))\cdot T & \text{ if } x>R
 \end{cases} 
\label{phi1-def}
\end{equation}
is nondecreasing, and the
function 
\begin{equation}
\label{psi1-def}
\psi_1(x):=T-{x}/{f'_r(\omega(x))}
\qquad 0<x<R\,,
\end{equation}
\vspace{-20pt}

is decreasing.
\item[(ii)]
If  $\big(\omega(0-),\, \omega(0+)\big)\in \mathcal{T}_2$\,, and $\omega$ satisfies~\eqref{c4}, then 
~\eqref{bound totale2} holds if and only if the function
\vspace{-10pt}

\begin{equation}
 \varphi_2 (x):= 
 \begin{cases}
x-f'_{l}(\omega(x))\cdot T & \text{ if } x<L,\\
-f'_{r}\circ f_{r,-}^{-1}\circ f_{l}(\omega(x))\cdot\big(\text{T}-{x}/{f'_{l}(\omega(x))}\big) & \text{ if } L<x<0,\\
x -f'_{r}(\omega(x))\cdot T & \text{ if } x>0
\end{cases}
\label{phi2-def}
\end{equation}
 is nondecreasing, and the
function 
\vspace{-10pt}

\begin{equation}
\label{psi2-def}
\psi_2(x):=T-{x}/{f'_l(\omega(x))}
\qquad L<x<0\,,
\end{equation}

\vspace{-10pt}
is increasing.
 \item[(iii)]
 If  $\omega$ satisfies $(\ref{T3-cond1})-(\ref{c7})$,  then the function
 \vspace{-10pt}
 
\begin{equation} 
\varphi_3 (x):= \begin{cases}
x-f'_{l}(\omega(x))\cdot T & \text{ if } x<L,\\
\noalign{\smallskip}
x -f'_{r}(\omega(x))\cdot T & \text{ if } x>R,
\end{cases}
\label{phi3-def}
\end{equation}
 is nondecreasing 
 if and only if~\eqref{bound totale3} holds. 
\end{itemize}
\end{lem}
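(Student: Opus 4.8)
The plan is to reduce each of the three equivalences in Lemma~\ref{lemma bound} to a pointwise differential inequality for the relevant map $\varphi_i$, and then to invoke the elementary fact that a function with left and right limits everywhere is nondecreasing (resp.\ decreasing) if and only if its upper Dini derivative is nonnegative (resp.\ nonpositive) at every point together with the correct ordering of one-sided limits at jumps. Concretely, the first step is to record, once and for all, a calculus lemma: if $g:\mathbb{R}\to\mathbb{R}$ has one-sided limits everywhere, then $g$ is nondecreasing iff $D^+g(x)\ge 0$ for all $x$ and $g(x-)\le g(x+)$ at every point (and analogously for monotone decreasing with the reversed inequalities). The jump conditions $\omega(L-)\ge\omega(L+)$, $\omega(R-)\ge\omega(R+)$ in \eqref{c1}, \eqref{c4}, \eqref{c7}, combined with the monotonicity/sign of $f'_l,f'_r$ and of the composed maps $\pi$, will translate exactly into the correct jump inequalities for $\varphi_i$ (and $\psi_i$) at the interior breakpoints $L$, $R$ and at $x=0$; so the only substantive work is the interior pointwise computation on each of the open intervals.

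On each open interval where $\varphi_i$ is defined by a single formula, the plan is to compute $D^+\varphi_i(x)$ via the chain rule, treating $\omega$ as the nonsmooth factor and everything else as $C^1$. For instance, on $x<0$ one has $\varphi_1(x)=x-f'_l(\omega(x))\,T$, so $D^+\varphi_1(x)\ge 0$ is equivalent to $f''_l(\omega(x))\,T\,D^+\omega(x)\le 1$ (using $f''_l>0$ to preserve the inequality direction), which is precisely the first branch of \eqref{bound totale1}; similarly for the $x>R$ branch with $f_r$. On the middle interval $0<x<R$, $\varphi_1(x)=-\,f'_l\!\circ f_{l,+}^{-1}\!\circ f_r(\omega(x))\cdot\bigl(T-x/f'_r(\omega(x))\bigr)$, and here one differentiates a product of two $\omega$-dependent factors; carrying out the chain rule — using $(f_{l,+}^{-1})'=1/(f''_l\circ f_{l,+}^{-1})$ and the quotient rule on $x/f'_r(\omega(x))$ — and then solving the resulting linear inequality for $D^+\omega(x)$ yields exactly the (admittedly heavy) second branch of \eqref{bound totale1}. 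The algebraic bookkeeping here is the bulk of the labour, but it is routine; the key structural point to check is that the denominator appearing in \eqref{bound totale1} is strictly positive on $(0,R)$, which follows from \eqref{c1} (it forces $\psi_1(x)=T-x/f'_r(\omega(x))>0$, since along the characteristic reaching a point of $(0,R)$ the foot-time is positive) together with $f'_r(\omega(x))>0$ on that interval because $\omega(x)\ge (f'_r)^{-1}(x/T)>\theta_r$. The role of $\psi_1$ being decreasing is auxiliary: it guarantees that the product rule has the expected sign structure and that the characteristic picture in Figure~\ref{figure 2-1} is consistent, and its monotonicity is itself equivalent (again by the same Dini-derivative criterion, plus the jump at $x=0$ coming from $(\omega(0-),\omega(0+))\in\mathcal T_1$) to a pointwise bound on $D^+\omega$ that is implied by, and helps reorganize, \eqref{bound totale1}. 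Part (ii) is entirely symmetric, exchanging the roles of $l\leftrightarrow r$, of $R\leftrightarrow L$, of $x>0\leftrightarrow x<0$, and of "$\psi$ decreasing" $\leftrightarrow$ "$\psi$ increasing", so it requires no new idea. Part (iii) is the easiest: on $\{x<L\}$ and $\{x>R\}$, $\varphi_3$ has the same two single-branch formulas as above, so $D^+\varphi_3\ge0$ on each is immediately equivalent to the two branches of \eqref{bound totale3}, and the jump conditions at $L$ and $R$ are provided directly by \eqref{c7}; on $(L,0)\cup(0,R)$ the function $\omega$ is constant ($=A$, then $=B$), so there is nothing to verify there, while the matching of $\varphi_3$'s one-sided values across $L$, $0$, $R$ uses $(\omega(0-),\omega(0+))\in\mathcal T_{3,\pm}\cup\{(A,B)\}$ and the sign conditions $f'_l(A)\le0\le f'_r(B)$.

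The main obstacle is the middle-interval computation in parts (i) and (ii): one must differentiate a product in which both factors depend on $\omega$ through a composition of several maps whose derivatives involve $f''$ evaluated at intermediate points, and then invert a linear inequality in $D^+\omega(x)$ without losing track of signs — in particular one must be sure that the coefficient of $D^+\omega(x)$ on the left is positive (so the inequality direction is preserved), which again reduces to positivity of the displayed denominator. A secondary subtlety, which must not be glossed over, is the direction of the Dini-derivative passages: since $\omega$ is only assumed to have one-sided limits and $D^+$ is a $\limsup$, the chain rule " $D^+(g\circ\omega)(x)=g'(\omega(x))\,D^{\pm}\omega(x)$ " holds with $D^+$ on the left only when $g'$ has a fixed sign near $\omega(x)$ — which is exactly the case here because $f'_l$, $f'_r$ are monotone on each of the relevant restricted domains — and one should state this as a preliminary observation (essentially a special case of the chain rule for monotone outer functions) before applying it. Once these sign checks are in place, the equivalences follow by matching the computed pointwise Dini bound with the stated one on each interval and matching the jump inequalities at $L$, $R$, $0$ with those in \eqref{c1}, \eqref{c4}, \eqref{c7}.
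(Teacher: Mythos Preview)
Your proposal is correct and follows essentially the same approach as the paper's proof: reduce monotonicity of $\varphi_i$ (and $\psi_i$) to a Dini-derivative sign condition, verify the jump inequalities at the breakpoints $0$, $L$, $R$ using the trace conditions and the sign of $f'_{l,r}$, handle interior discontinuity points of $\omega$ via the monotonicity of the outer composed maps (what the paper packages as the auxiliary functions $g_1,g_2,g_3$ being nonincreasing in $v$), and then carry out the chain-rule computation at continuity points on each subinterval. The paper's write-up makes explicit one point you only allude to, namely that on $(0,R)$ the single bound \eqref{bound totale1} is equivalent to the \emph{pair} of inequalities $D^+\varphi_1\ge 0$ and $D^+\psi_1<0$ (so $\psi_1$ decreasing is not merely auxiliary but is genuinely part of the equivalence, the strictness coming from the strict inequality implicit in \eqref{bound totale1} when $f'_r(\omega(x))T-x>0$); you should state this cleanly rather than calling $\psi_1$'s role ``auxiliary''.
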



\begin{proof}
We prove only the  statement (i), 
the proofs of the other two statements being entirely similar. \\
\noindent
{\bf 1.} First observe that 
the  monotonicity of $\varphi_1$,  $\psi_1$, are equivalent to 
\begin{equation}\label{monotonia1}
D^+\varphi_1(x)\geq 0\quad\ \forall~x\in \mathbb{R},
\qquad\quad  D^+\psi_1(x)<0
\quad\ \forall~x\in (0,R)\,.
\end{equation}
Next, notice that by~\eqref{c1} we have
\begin{align}
&f'_l(\omega(0-))\geq 0\,,\qquad\qquad\qquad 
\omega(0-)\geq \theta_l\,,
\qquad \qquad \ f'_r(\omega(x))\cdot T-x\geq 0\quad \forall~x\in (0,R)\,,
\label{cond1-monot}
\\
\noalign{\smallskip}
&R-f'_r(\omega(R+))\cdot T\geq 0\,,\qquad\ 
f'_r(\omega(R-))>0\,,
\qquad\ T-{R}/{f'_r(\omega(R-))}\geq 0\,.
\label{cond2-monot}
\end{align}
Moreover,
$(\omega(0-),\, \omega(0+))\in \mathcal{T}_1$
implies that $f_l(\omega(0-))\geq f_r(\omega(0+))$.
Hence, relying on~\eqref{cond1-monot} we deduce that
\begin{equation}
\omega(0-)= f_{l,+}^{-1}\circ f_l (\omega(0-)) \geq 
f_{l,+}^{-1}\circ f_r (\omega(0+)),
\end{equation}
which in turn, together with~\eqref{cond2-monot},
yields
\begin{equation}
\varphi_1(\omega(0-))=-f'_l(\omega(0-))\cdot T
\leq - f'_{l}\circ f_{l,+}^{-1}\circ f_r(\omega(0+))\cdot T = \varphi_1(\omega(0+)).
\end{equation}
On the other hand, since the function $f_{l,+}^{-1}$ takes values in $[\theta_l,+\infty)$
(see definition in Section~\ref{sec:main results}), it follows that 
\begin{equation}
\label{cond3-monot}
f'_{l}\circ f_{l,+}^{-1}\circ f_r (v)\geq 0\qquad\quad \forall~v\in\mathbb{R}\,.
\end{equation}
Hence, because of~\eqref{cond2-monot}, we deduce that
\begin{equation}
\varphi_1(R-)=
-f'_{l}\circ f_{l,+}^{-1}\circ f_{r}(\omega(R-))\cdot\big(\text{T}-{R}/{f'_{r}(\omega(R-))}\big)
\leq 0\leq R-f'_r(\omega(R+))\cdot T
= \varphi_1(R+).
\end{equation}
Therefore, in order to establish the statement (i) it is sufficient to show that
\begin{equation}\label{monotonia2}
D^+\varphi_1(x)\geq 0 \qquad\ \forall~x\in \mathbb{R}\setminus\{0,R\}
\qquad\quad  D^+\psi_1(x)<0
\quad\ \forall~x\in (0,R),
\end{equation}
are verified if and only if~\eqref{bound totale1} holds.
\\
{\bf 2.}
We first show that the equivalence between~\eqref{bound totale1} and~\eqref{monotonia2}
holds at any point of discontinuity for $\omega$.
To this end observe that
the maps
\vspace{-3pt}
\begin{equation*}
g_1(v,x)\doteq  x-f'_{l}(v)\cdot T,\quad\
g_2(v.x)\doteq 
\left[-f'_l\circ f_{l,+}^{-1}\circ f_r(v)\big(\text{T}-{x}/{f'_r(v)}\big)\right]_{\mid \{v;\, f'_r(v)\cdot T\!-\!x\geq 0\}},
\quad\ g_3(v,x)\doteq x -f'_{r}(v)\cdot T\,,
\end{equation*}
\vspace{-3pt}
are nonincreasing in $v$ since, by the strict convexity of the fluxes $f_l, f_r$,
and because of~\eqref{cond3-monot},
we have
\begin{equation}
\begin{aligned}
\partial_v\, g_1(v,x)&=-f''_l(v)\cdot T <0\,,
\\
\partial_v\, g_2(v,x)&=-
\dfrac{\big[f''_l\circ f_{l,+}^{-1}\circ f_r(v)\big]\big[f'_r(v)\big]^2\big[f'_r(v)\cdot T-x
\big]+x \big[f'_l\circ f_{l,+}^{-1}\circ f_r(v)\big]^2 \big[f''_r(v)\big]}{\big[f'_l\circ f_{l,+}^{-1}\circ f_r(v)\big]\big[f'_r(v)\big]^2}\leq 0\,,
\\
\partial_v\, g_3(v,x)&=-f''_r(v)\cdot T <0\,.
\end{aligned}
\end{equation}
Moreover, \eqref{bound totale1}, \eqref{cond1-monot} 
and the assumption {\bf H1)} 
together imply that $D^+ \omega(x)$ is upper bounded
since
\vspace{-5pt}
\begin{equation}
\label{cond4-monot}
D^+ \omega(x) \leq
\begin{cases}
&\hspace{-0.12in}{1}/{(c \cdot T)}
\qquad\qquad\ \text{if}\quad x<0\,,\\
\noalign{\smallskip}
&\hspace{-0.12in} {f'_r(\omega(x))}/{(x\cdot  c)}
\quad\,  \text{if}\quad 0<x<R\,,\\
\noalign{\smallskip}
&\hspace{-0.12in}{1}/{(c \cdot T)} 
 \qquad\qquad\ \text{if}\quad x>0\,.
 \end{cases} 
\end{equation}
Hence, if $x$ is a point of discontinuity for $\omega$, the inequality~\eqref{bound totale1} 
is verified if and only if $\omega(x-)>\omega(x+)$. On the other hand, since
\vspace{-15pt}

\begin{equation*}
\varphi_1(x)=
\begin{cases}
g_1(\omega(x),x)&\ \ \text{if}\quad x<0\,,
\\
g_2(\omega(x),x)&\ \ \text{if}\quad 0<x<R\,,
\\
g_3(\omega(x),x)&\ \ \text{if}\quad x>R\,,
\end{cases}
\end{equation*}
by the monotonicity of the maps $g_1, g_2, g_3$
in $v$, and by the strict convexity of $f_r$,
we have $\omega(x-)>\omega(x+)$ if and only if $\varphi_1(x-)<\varphi_1(x+)$
and $\psi_1(x-)>\psi_1(x+)$ (if $x\in (0,R)$). In turn, if $x$ is a point of discontinuity for~$\varphi_1$
and $\psi_1$ (if $x\in (0,R)$),
then $\varphi_1(x-)<\varphi_1(x+)$, $\psi_1(x-)>\psi_1(x+)$,
are verified if and only if $D^+\varphi_1(x)\geq 0$, 
$D^+\psi_1(x)<0$. Thus, we conclude that in order to establish
the statement (i) it is sufficient to prove that the equivalence between~\eqref{bound totale1} and~\eqref{monotonia2}
is verified at any point of continuity for $\omega$.\\
{\bf 3.}
If $x<0$ is a point of continuity for $\omega$, then we get
\vspace{-5pt}

\begin{equation}
\label{limsup1}
\begin{aligned}
    D^+\varphi_1(x)&
    =  1 + \partial_v\,g_1(\omega(x),x) \cdot D^+\omega(x)
    = 1-f''_{l}(\omega(x)) \cdot T\cdot D^+\omega (x)\,,
\end{aligned}
\end{equation}
\vspace{-10pt}

\noindent
which shows the equivalence between the first inequality in~\eqref{bound totale1} and~\eqref{monotonia2}.
With the same computation we find the equivalence between the third inequality in~\eqref{bound totale1} and~\eqref{monotonia2}, considering a point $x>R$ of continuity for $\omega$.
Next, consider a point $ 0< x< R$ where $\omega$ is continuous. Then we find
\vspace{-3pt}
\begin{equation*}
\label{gran catena}
\begin{aligned}
D^+\varphi_1(x)&= 
\partial_v\,g_2(\omega(x)) \cdot D^+\omega(x) +  \frac{\big[f'_l\circ f_{l,+}^{-1}\circ f_r(\omega(x))\big]}{ f'_r(\omega(x))}
\\ 
\noalign{\smallskip}
&=
-\frac{ \big[f''_l\circ f_{l,+}^{-1}\circ f_r(\omega(x))\big]\!\left[f'_r(\omega(x))\right]^2\big( f'_r(\omega(x))\cdot T\!-\!x \big)\!+\!x\big[f'_l\circ f_{l,+}^{-1}\circ f_r(\omega(x))\big]^2  f''_r(\omega(x))}{\big[f'_l\circ f_{l,+}^{-1}\circ f_r(\omega(x))\big]  \cdot\left[f'_r(\omega(x))\right]^2}\cdot D^+\omega(x)+
\\
&\quad +\frac{\big[f'_l\circ f_{l,+}^{-1}\circ f_r(\omega(x))\big]}{ f'_r(\omega(x))},
\end{aligned}
\end{equation*}
and
\vspace{-5pt}
\begin{equation*}
D^+\psi_1(x)=-\frac{f'_r(\omega(x))-xf''_r(\omega(x))\cdot D^+\omega(x)}{\left[f'_r(\omega(x))\right]^2}\,.
\end{equation*}
Hence,  by~\eqref{cond3-monot} we deduce that $D^+\varphi_1(x)\geq 0$
and  $D^+\psi_1(x)< 0$ hold if and only if
\begin{equation}
\label{limsup111}
\begin{aligned}
&\left[
\big[f''_l\circ f_{l,+}^{-1}\circ f_r(\omega(x))\big]\!\left[f'_r(\omega(x))\right]^2\big( f'_r(\omega(x))\cdot T\!-\!x \big)\!+\!x\big[f'_l\circ f_{l,+}^{-1}\circ f_r(\omega(x))\big]^2  f''_r(\omega(x))
\right]\cdot D^+\omega(x)\leq 
\\
&\qquad \leq \big[f'_l\circ f_{l,+}^{-1}\circ f_r(\omega(x))\big]^2\cdot f'_r(\omega(x))\,,
\end{aligned}
\end{equation}
and
\begin{equation}
\label{limsup112}
x\, f''_r(\omega(x)) \cdot D^+\omega(x)< f'_r(\omega(x))\,.
\end{equation}
By~\eqref{cond1-monot} and the convexity of $f_r$, 
the inequalities~\eqref{limsup111}-\eqref{limsup112} are equivalent to the
second inequality in~\eqref{bound totale1}, and the 
prove of the statement (i) is completed.
\end{proof}
\medskip
An immediate consequence of Lemma~\ref{lemma bound} is the following.
%
\begin{lem}\label{lem2}
In the same setting and with the same notations of Theorem~\ref{Teo1}, 
the sets $\mathcal{A}_1(T), \mathcal{A}_2(T), \mathcal{A}^{AB}_3(T)$ are equivalently defined 
as sets of 
functions $\omega\in\mathbf{L}^\infty(\mathbb{R})$ having  essential left and right limits at $x=0$,
that satisfy the following conditions.  
\vspace{-5pt}
 \begin{itemize}
  \item[$\mathcal{A}_{1}\text{(T)}$] 
  is the set  of   all functions $\omega$  that satisfy\, $\big(\omega(0-),\, \omega(0+)\big)\in \mathcal{T}_1$\,,
  and for which there exists  $R>0$ such that: there holds $\omega(R-)\geq \omega(R+)$,
\vspace{-3pt}
\begin{equation}
f'_l(\omega(x)) \geq {x}/{T} +f'_l(\omega(0-)) \  \ \ \forall\,x<0,
\qquad
f'_r(\omega(x))\geq {x}/{T}  \ \ \ \forall\,0<x<R,
\qquad
f'_r(\omega(x)) < {x}/{T} \ \ \ \forall\,x>R,
\label{c11} 
\end{equation}
\vspace{-20pt}

\noindent
 the map $\varphi_1$ in~\eqref{phi1-def} is nondecreasing, and the map $\psi_1$ in~\eqref{psi1-def} is decreasing. 
\vspace{-2pt}
  \item[$\mathcal{A}_{2}\text{(T)}$] is the set  of   all functions $\omega$  that satisfy\, $\big(\omega(0-),\, \omega(0+)\big)\in \mathcal{T}_2$\,,
  and for which there exists  $L<0$ such that: there holds $\omega(L-)\geq \omega(L+)$,
\vspace{-3pt}
\begin{equation}
f'_l(\omega(x)) >{x}/{T}  \ \  \ \forall\,x<L,
\qquad
f'_l(\omega(x))\leq {x}/{T}  \ \ \  \forall\,0<x<L,
\qquad\
f'_r(\omega(x)) \leq  {x}/{T}+f'_r(\omega(0+)) \ \ \ \forall\,x>0,
\label{c21} 
\end{equation}
\vspace{-20pt}

\noindent
 the map $\varphi_2$ in~\eqref{phi2-def} is nondecreasing, and the map $\psi_2$ in ~\eqref{psi2-def} is increasing.
  \vspace{-2pt}
  \item[$\mathcal{A}_3^{AB}\text{(T)}$] is the set  of all functions $\omega$
    for which there exist  $L\leq 0 \leq R$, such that: 
    \vspace{-5pt}
  \begin{equation}
  \label{T3-cond11}
  \big(\omega(0-),\, \omega(0+)\big)\in
  \begin{cases}
   \mathcal{T}_{3,-}\cup \mathcal{T}_{3,+}\  &\text{if} \quad L=R=0\,,
   \\
   \noalign{\smallskip}
   \big\{(A,B)\big\}\  &\text{if} \quad L\leq 0 \leq R\,,
   \end{cases}
   \qquad\quad
   \omega(L-)\geq \omega(L+)\,,\qquad\quad 
   \omega(R-)\geq \omega(R+)\,,
  \end{equation}
   \begin{equation}
    \label{c71}
    \begin{aligned}
    &\omega(x)=A\quad \forall~x\in (L,0),\qquad\quad  \omega(x)=B\quad\forall~x\in (0,R),
    \\
    \noalign{\medskip}
     &f'_l(\omega(x)) \geq 
      \begin{cases}
   {x}/{T}\ \ &\text{if}\ \ L<0,
     \\
     \noalign{\smallskip}
     {x}/{T}\!+\!f'_l(\omega(0-)) &\text{if} \ \ L=0,
     \end{cases}
    \qquad \forall~x\in (-\infty, L),
     \\
    \noalign{\medskip}
     &
     f'_r(\omega(x)) \leq 
      \begin{cases}
     {x}/{T}\ \ &\text{if}\ \ R<0,
     \\
     \noalign{\smallskip}
     {x}/{T}\!+\!f'_r(\omega(0+)) &\text{if} \ \ R=0,
     \end{cases}
     \qquad \forall~x\in (R,+\infty),
     \end{aligned}
     \end{equation}
     and the map $\varphi_3$ in~\eqref{phi3-def} is nondecreasing.
\end{itemize}
\end{lem}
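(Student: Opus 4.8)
The assertion is essentially a restatement of Lemma~\ref{lemma bound}, so the plan is to combine two elementary observations. First, by assumption {\bf H1)} the derivatives $f'_l$ and $f'_r$ are strictly increasing homeomorphisms of $\mathbb{R}$, with inverses $(f'_l)^{-1}$ and $(f'_r)^{-1}$; hence, applying $f'_l$ or $f'_r$ to both sides of each of the pointwise inequalities appearing in~\eqref{c1}, \eqref{c4}, \eqref{c7} — and leaving the inequalities $\omega(R-)\geq\omega(R+)$, $\omega(L-)\geq\omega(L+)$ untouched — one checks immediately that~\eqref{c1} is equivalent to~\eqref{c11}, that~\eqref{c4} is equivalent to~\eqref{c21}, and that the last two inequalities of~\eqref{c7} are equivalent to the last two inequalities of~\eqref{c71}; the conditions~\eqref{T3-cond1} and the interior identities $\omega\equiv A$ on $(L,0)$, $\omega\equiv B$ on $(0,R)$ are literally the same as~\eqref{T3-cond11} and the corresponding identities in~\eqref{c71}. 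In carrying this out one only needs to keep track of which monotone branch of $f_l$, $f_r$ is involved, which is already encoded in the membership of $(\omega(0-),\omega(0+))$ in $\mathcal{T}_1$, $\mathcal{T}_2$, $\mathcal{T}_{3,\pm}$.

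With these identifications in hand, the second step is a direct appeal to Lemma~\ref{lemma bound}. For $\mathcal{A}_1(T)$: a function $\omega$ with $(\omega(0-),\omega(0+))\in\mathcal{T}_1$ satisfying~\eqref{c1} (equivalently~\eqref{c11}) belongs to $\mathcal{A}_1(T)$ exactly when~\eqref{bound totale1} holds, which by part~(i) of Lemma~\ref{lemma bound} is equivalent to $\varphi_1$ being nondecreasing and $\psi_1$ decreasing — this is precisely the description of $\mathcal{A}_1(T)$ claimed in Lemma~\ref{lem2}. The sets $\mathcal{A}_2(T)$ and $\mathcal{A}^{AB}_3(T)$ are treated in the same way, invoking parts~(ii) and~(iii) of Lemma~\ref{lemma bound}; for $\mathcal{A}^{AB}_3(T)$ only the two outer inequalities of~\eqref{c7} need to be translated, the interior values of $\omega$ and the alternative~\eqref{T3-cond1} being common to both formulations.

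The only point deserving a word of care — and the step I expect to be the main, though minor, obstacle — is the regularity framework in which Lemma~\ref{lemma bound} is invoked: that lemma presupposes $\omega$ to be a bounded function admitting right and left limits at every point, whereas the sets $\mathcal{A}_i(T)$ consist of equivalence classes in $\mathbf{L}^\infty(\mathbb{R})$ for which one-sided limits are prescribed only at $x=0$. As observed in Remark~\ref{main-thm-interpret1}, the bounds~\eqref{bound totale1}, \eqref{bound totale2}, \eqref{bound totale3} together with~\eqref{c1}, \eqref{c4}, \eqref{c7} force any such $\omega$ to have finite increasing (hence finite total) variation on every subset of $\mathbb{R}$ bounded away from the origin, so that a representative possessing one-sided limits at every point exists, and Lemma~\ref{lemma bound} applies to it. Conversely, the monotonicity of $\varphi_i$ (together with the pointwise bounds~\eqref{c11}, \eqref{c21}, \eqref{c71} on $f'_{l}(\omega)$, $f'_{r}(\omega)$, which by strict convexity again control the total increasing variation away from $x=0$) yields the same regularity, so the two descriptions pick out the same equivalence classes. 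Apart from this bookkeeping, the substance of the equivalence is entirely contained in Lemma~\ref{lemma bound}.
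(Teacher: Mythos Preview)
Your proposal is correct and matches the paper's approach: the paper simply states that Lemma~\ref{lem2} is ``an immediate consequence of Lemma~\ref{lemma bound}'', and what you have written is precisely the unpacking of that claim---the equivalence of \eqref{c1}, \eqref{c4}, \eqref{c7} with \eqref{c11}, \eqref{c21}, \eqref{c71} via the strict monotonicity of $f'_l, f'_r$, followed by a direct invocation of Lemma~\ref{lemma bound}, with the regularity bookkeeping handled through Remark~\ref{main-thm-interpret1}.
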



\section{Proof of Theorem \ref{Teo1}}
We proceed by dividing the proof into two steps: first we show that any 
attainable profile at time $T>0$
of a solution to the problem~\eqref{eq1},\eqref{flux}-\eqref{datum} satisfies all the conditions of one of the tree sets described in the statement of Lemma~\ref{lem2}.
Next, we prove that, for any function $\omega$ in $\mathcal{A}_{1}(T)$, $\mathcal{A}_{2}(T)$ and $\mathcal{A}_{3}^{AB}(T)$, there exists $\overline{u}\in {\bf L}^{\infty}(\mathbb{R})$ such that $ S_{T} \overline{u}=\omega $.

\subsection{Proof of \ ${\bf \mathcal{A}(T)\subseteq \mathcal{A}_{1}(T)\cup\mathcal{A}_{2}(T)\cup\mathcal{A}_{3}^{AB}(T)}$.} 

Given $\overline {u}\in {\bf L^{\infty}}$, let $u(\cdot, t)\doteq \mathcal{S}_t^{AB}\overline u$, $t>0$,
we will show that $\omega\doteq \mathcal{S}_T^{AB}\overline u$ belongs to one
of the sets $\mathcal{A}_{1}(T), \mathcal{A}_{2}(T), \mathcal{A}_{3}^{AB}(T)$.
By Remark~\ref{propert1-AB-sol} we know that $\omega \in BV_{loc}(\mathbb{R}\setminus\{0\})$
and that $\omega$ admits one-sided limits at $x=0$. 
Then, recalling Remark~\ref{charact-lrtraces-admissible}, we will distinguish the following five cases.

\noindent
{\bf Case 1.} $\omega(0-)=A<\theta_l$, \
$\omega(0+)=B>\theta_r$.\\
Observe that, tracing the backward characteristics through points of continuity of $\omega$
in a neighbourhood of $x=0$,
with the same arguments of the proof of 
Proposition~\ref{Norarefactions} and relying on~\eqref{int-entr-cond3},
we deduce that 
\vspace{-5pt}
\begin{equation}
\label{411}
\begin{aligned}
&\qquad\qquad\qquad\qquad
u_l(t)=A,\qquad\quad u_r(t)=B\qquad \forall~t\in(\delta_1, T)\,,
\\
&\omega(x) =A\qquad \forall~x\in(-\delta_1,0),
\qquad\quad
\omega(x) =B\qquad \forall~x\in(0,\delta_1),
\end{aligned}
\end{equation}
\vspace{-8pt}

\noindent
for some there exist $\delta_1>0$ such that  Thus, by Proposition~\ref{Norarefactions}  we deduce that
\vspace{-5pt}
\begin{equation}
\label{412}
u_l(t)=A,\qquad u_r(t)=B\qquad\quad\ \forall~t\in(0, T)\,.
\end{equation}
\vspace{-15pt}

\noindent
Next, let $R\doteq \sup\{x>0;\, \omega(x)=B\ \text{for all}~y\in(0,x)\}$,
 $L\doteq \inf\{x<0;\, \omega(x)=A\ \text{for all}~y\in(x,0)\}$.
 By~\eqref{411} one has $L<0<R$. 
 Notice that $\omega(L-)\geq \omega(L+)$ and 
 $\omega(R-)\geq \omega(R+)$ because of the Lax entropy condition
 (see Remark~\ref{lax-kruzhkov-entr}).
 Consider the maximal backward characteristic $\xi_{R,+}$
 through $(R,T)$ and assume that it crosses  the axis $x=0$
 at time $t_R>0$. Then, by~\eqref{412} and the observations in Section~\ref{sec-struct-sol},
 it follows that  $\xi_{R,+}$ is a segment with positive slope $f'_r(B)
 =f'(\omega(R+))$.
 But this means that we may find $\delta_2>0$ such that
 all backward characteristics $\xi_x$ through points $(x,T)$, with $x\in (R, R+\delta_2)$,
 reach the axis $x=0$ at times $t_x\in (\delta_2, t_R)$. This implies that $\omega(x)=u_r(t_x)=B$
 for all $x\in (R, R+\delta_2)$, which is in contrast with the definition of $R$. Thus,
  the maximal backward characteristic $\xi_{R,+}$ is defined on the whole interval $[0,T]$,
  and there holds $\xi_{R,+}(t)\geq 0$ for all $t\in [0,T]$.
 With the same arguments we deduce that the
 minimal backward characteristic $\xi_{L,-}$
 through $(L,T)$, is defined on $[0,T]$
  and there holds $\xi_{L,-}(t)\leq 0$ for all $t\in [0,T]$.

 Given any $x>R$, consider the  minimal and maximal backward characteristics $\xi_{x,-}, \xi_{x,+}$
 through $(x,T)$. Since $\xi_{x,\pm}, \xi_{R,+}$
 are genuine characteristics for the conservation law $u_t+f_r(u)_x=0$,
 it follows that they never intersect in the open quarter of plane
 $(0,+\infty)\times(0,+\infty)$. Hence, $\xi_{x,\pm}$ are defined on the whole 
 interval $[0,T]$,
 and there holds
 \vspace{-5pt}
 \begin{equation*}
 \xi_{x,-}(t)= x+f'_r(\omega(x-))\cdot (t-T),\qquad\quad
 \xi_{x,+}(t)= x+f'_r(\omega(x+))\cdot (t-T)\qquad\forall~t\in[0,T]\,.
  \end{equation*}
  \vspace{-15pt}
  
\noindent
Moreover, one has $x-f'_r(\omega(x\pm))\cdot T=\xi_{x,\pm}(0)\geq \xi_{R,+}(0)\geq 0$, which implies $f'_r(\omega(x\pm))\leq \frac{x}{T}$.
%
 On the other hand,
 recalling the definition~\eqref{phi3-def} of $\varphi_3$, we deduce that, for every $R<x<y$, there holds
$\varphi_3(x\pm)=\xi_{x,\pm}(0)\leq \xi_{y,\pm}(0)=\varphi_3(y\pm)$,
which proves the nondecreasing monotonicity of $\varphi_3$ on $(R, +\infty)$.
With similar arguments we deduce that $f'_l(\omega(x\pm))\geq \frac{x}{T}$
for all $x\in (-\infty, L)$,
and that $\varphi_3$ is nondecreasing also on $(-\infty,L)$.
Therefore, the function $\omega$ satisfies conditions~\eqref{T3-cond11}, \eqref{c71}
and $\varphi_3$ is nondecreasing on $(-\infty,L)$ and $(R,+\infty)$.
Since $\varphi_3(x)\leq 0$ for all $x\in (-\infty, L)$, and $\varphi_3(x)\geq 0$ for all $x\in (R,+\infty)$,
it follows that $\varphi_3$ is nonincresing on its domain
and hence we have $\omega\in \mathcal{A}_{3}^{AB}(T)$.
\smallskip

\noindent
{\bf Case 2.} $(\omega(0-), \omega(0+))=(A,B)$, \ $A=\theta_l, B>\theta_r$,\
or \ $A<\theta_l, B=\theta_r$,\
or \ $A=\theta_l, B=\theta_r$.\\
Assume that $A=\theta_l, B>\theta_r$, the other cases being entirely similar.
Then, letting $R\doteq \sup\{x>0;\, \omega(x)=B\ \text{for all}~y\in(0,x)\}$,
by the same analysis of Case 1 we deduce that $R>0$, $\omega(R-)\geq \omega(R+)$,
$f'_r(\omega(x\pm))\leq \frac{x}{T}$ for all $x>R$, and that the map $\varphi_3$
in~\eqref{phi3-def} is nondecreasing on $(R, +\infty)$. Next, assume that there exists $x<0$ such that 
$f'_l(\omega(x+))< \frac{x}{T}$. Then, the maximal backward characteristics $\xi_x$ starting
at $(x,T)$ crosses the axis $x=0$ at some time $t_x>0$. On the other hand, 
 the maximal backward characteristics $\xi_{x_n}$ trough
a sequence of points $(x_n,T), x_n\uparrow 0$, are lines with slope
$f'_l(\omega(x_n+))\to f'_l(\omega(0-))=0$. Hence, there will be some $n$
such that $\xi_{x_n}$ intersect $\xi_x$ in $(-\infty,0)\times (0,+\infty)$, which is
not possible. Therefore, there holds $f'_l(\omega(x\pm))\geq  \frac{x}{T}$ for all $x<0$,
and with the same arguments of Case 1 one can show that $\varphi_3$
is nondecreasing on $(-\infty, 0)$ as well,
and that $\varphi_3(0-)\leq 0<\varphi_3(R+)$. Thus, setting $L=0$, 
we have shown that $\omega\in \mathcal{A}_{3}^{AB}(T)$.
\smallskip

\noindent
{\bf Case 3.} $(\omega(0-), \omega(0+))\in\mathcal{T}_1$.\\
Notice that $(\omega(0-), \omega(0+))\in\mathcal{T}_1$ implies $\omega(0+)>\theta_r$,
and hence $f'_r(\omega(0+))>0$. Thus, there exist $\delta_1>0$ such that
$f'_r(\omega(x+))\geq \frac{x}{T}$ for all $x\in (0,\delta_1)$. Then, setting
$R\doteq \sup\{x>0;\, f'_r(\omega(x+))\geq \frac{x}{T}\}$, one has $R>0$
and $\omega(R-)\geq \omega(R+)$, because of the Lax entropy condition
 (see Remark~\ref{lax-kruzhkov-entr}).
Observe that if, $f'_r(\omega(x+))< \frac{x}{T}$ or $f'_r(\omega(x-))< \frac{x}{T}$
for some $x\in (0,R)$, then one would deduce that the backward 
(minimal and maximal) characteristics 
$\xi_{y,\pm}$ through $(y,T)$, $y\in (x,R)$, should cross in $(0,+\infty)\times(0,+\infty)$
the backward characteristic 
$\xi_{x,+}$ or $\xi_{x,+}$ through $(x,T)$, which is not possible.
Thus, there holds $f'_r(\omega(x\pm))\geq \frac{x}{T}$ for all $x\in(0,R)$.
Next, consider the maximal backward characteristic $\xi_{R,+}$ through $(R,T)$,
and suppose that it is defined on an interval $[t_R, T]$, $t_R>0$, with $\xi_{R,+}(t_R)=0$.
This means that $f'_r(\omega(R+))=\frac{R}{t_R}>\frac{R}{T}$, which implies 
that there exists $\delta_1>R$ such that
$f'_r(\omega(x+))> \frac{x}{T}$ for all $x\in (R,\delta_1)$. 
But this is in contrast with the definition of $R$. Hence $\xi_{R,+}$
is defined on the whole interval $[0,T]$,
  and there holds $\xi_{R,+}(t)\geq 0$ for all $t\in [0,T]$.
On the other hand, $(\omega(0-), \omega(0+))\in\mathcal{T}_1$
implies $\omega(0-)>\theta_l$, and hence
the minimal backward characteristics $\xi_{0,-}$
through~$(0,T)$ satisfies $\xi_{0,-}(0)=-f'_l(\omega(0-))\cdot T<0$.
Then, since backward characteristics starting at points $(x,T)$
with $x<0$ or $x>R$ cannot cross $\xi_{0,-}$ and $\xi_{R,+}$, respectively,
and by the definition of $R$,
we deduce that $f'_l(\omega(x\pm))\geq  \frac{x}{T}+f'_l(\omega(0-))$
for all $x\in (-\infty, 0)$ and $f'_r(\omega(x\pm))< \frac{x}{T}$
for all $x\in (R, +\infty,)$.
Moreover, with the same arguments we deduce that 
$f'_r(\omega(x\pm))\geq  \frac{x}{T}$
for all $x\in (0,R)$. Therefore, the function $\omega$ satisfies 
condition~\eqref{c11}.

Next, with similar arguments of Case 1, we deduce that the map 
$\varphi_1$ defined in~\eqref{phi1-def} is nondecreasing on the intervals
$(-\infty, 0)$ and $(R, +\infty)$.
Regarding the monotonicity of $\varphi_1, \psi_1$ (defined in~\eqref{psi1-def}) on $(0,R)$, 
first observe that, since the Lax entropy condition implies $\omega(x-)\geq \omega(x+)$,
by the strict convexity of $f_r$
it follows that $\psi_1(x-)> \psi_1(x+)$ at any point  $x\in (0,R)$
of discontinuity for $\omega$.
Next, consider the
maximal backward characteristic $\xi_{x,+}$ through $(x,T)$, $0<x<R$,
 and the minimal backward characteristic~$\xi_{y,-}$ through $(y,T)$, $x<y<R$, given by
\begin{equation*}
 \xi_{x,+}(t)= x+f'_r(\omega(x+))\cdot (t-T)
 \quad t \in [t_x, T],\qquad\quad
 \xi_{y,-}(t)= y+f'_r(\omega(y-))\cdot (t-T)
 \quad t\in [t_y, T]\,,
  \end{equation*}
  with $\xi_{x,+}(t_x)= \xi_{y,-}(t_y)=0$, $t_x\doteq \psi_1(x+)$, $t_y\doteq \psi_1(y-)$.
Since $\xi_{x,+}$,  $\xi_{y,-}$ cannot cross on
$(0,+\infty)\times(0,+\infty)$, 
one has $t_x\geq t_y$.
On the other hand, if $t_x=t_y$, then
there would be two forward characteristics with positive slope issuing form $(0,t_x)$,
which is in contrast with Proposition~\ref{Norarefactions}. Thus, it must be $\psi_1(x+)=t_x>t_y= \psi_1(y-)$,
which proves the decreasing monotonicity of $\psi_1$.

The monotonicity of $\psi_1$ in particular implies $\psi_1(x\pm)>\psi_1(R-)$ for all $x\in (0,R)$.
Observe that $u_r(t\pm)>\theta_r$\linebreak for all $t\in (\psi_1(R-),T)$,
since any point $(0,t), t\in (\psi_1(R-),T)$ is reached by a backward characteristic 
(crossing $x=0$ with positive slope) issuing from a point 
$(x,T), x \in (0,R)$. In turn, this implies that $u_l(t\pm)>\theta_l$ 
for any time $t\in (\psi_1(R-),T)$
of continuity for $u_l, u_r$,
since otherwise, by~\eqref{int-entr-cond3}
we should have $u_l(\,\overline t-)=A$, $u_r(\,\overline t-)=B$,
for some $\overline t \in  (\psi_1(R-),T)$. But, by the analysis of Proposition~\ref{Norarefactions}, this 
implies that either
\vspace{-3pt}
\begin{equation*}
u_l(t)=A,\qquad u_r(t)=B \quad\ \forall~t\in (0,\overline t\,),
\qquad\quad u_l(\,\overline t+)>\theta_l\,,\qquad
u_r(\,\overline t+)<\theta_r\,,
\end{equation*}
\vspace{-5pt}
or
\begin{equation*}
u_l(t)=A,\qquad u_r(t)=B \quad\ \forall~t\in (0,T),
\end{equation*}
which are in contrast with $u_r(t\pm)>\theta_r$ for all $t\in (\psi_1(R-),T)$,
and with $(\omega(0-), \omega(0+))\in\mathcal{T}_1$, respectively.
Therefore, we have $u_l(t\pm)>\theta_l$ 
for all $t\in (\psi_1(R-),T)$. Hence, by~\eqref{int-entr-cond3}, \eqref{pimap-def},
there holds 
$u_l(t)=\pi_{l,+}^r(u_r(t))$
at any time $t\in (\psi_1(R-),T)$
of continuity for $u_l, u_r$. 
Hence, in particular for $t_x\doteq \psi_1(x+)$, $t_y\doteq \psi_1(y-)$  we find
\vspace{-1pt}
\begin{equation}
u_l(t_x-)=\pi_{l,+}^r(u_r(t_x-))=\pi_{l,+}^r(\omega(x+))\,,
\qquad\quad
u_l(t_y+)=\pi_{l,+}^r(u_r(t_y+))=\pi_{l,+}^r(\omega(y-))\,.
\end{equation}
\vspace{-10pt}

\noindent
Consider now the backward characteristics (for $u_t+f_l(u)_x=0$) 
$\zeta_{t_x,-}$, $\zeta_{t_y,+}$, issuing from $(0,t_x)$ and 
 from $(0,t_y)$, respectively, given by
 \vspace{-3pt}
\begin{equation*}
\begin{aligned}
 \zeta_{t_x,-}(t)&= f'_l(u_l(t_x-))\cdot (t-t_x)=f'_l\big(\pi_{l,+}^r(\omega(x+))\big)\cdot (t-t_x)
 \qquad t \in [0,t_x],\\
 \noalign{\smallskip}
 \zeta_{t_y,+}(t)&= f'_l(u_l(t_y+))\cdot (t-t_y)=f'_l\big(\pi_{l,+}^r(\omega(y-))\big)\cdot (t-t_y)
 \qquad t\in [0,t_y,]\,.
 \end{aligned}
  \end{equation*}
  \vspace{-3pt}
  By definitions~\eqref{pimap-def},  \eqref{phi1-def}, , \eqref{psi1-def}, we find that
\begin{equation*}
\begin{aligned}
 \zeta_{t_x,-}(0)&=-f'_l\big(\pi_{l,+}^r(\omega(x+))\big)\cdot \big(T-{x}/{f'_r(\omega(x+))}\big)=
 -f'_l\circ f_{l,+}^{-1}\circ f_r(\omega(x+))\cdot \big(T-{x}/{f'_r(\omega(x+))}\big)=\phi_1(x+)\,,
 \\
 \noalign{\smallskip}
 \zeta_{t_y,+}(0)&=
 -f'_l\big(\pi_{l,+}^r(\omega(y-))\big)\cdot \big(T-{y}/{f'_r(\omega(y-))}\big)=
 -f'_l\circ f_{l,+}^{-1}\circ f_r(\omega(y-))\cdot \big(T-{y}/{f'_r(\omega(y-))}\big)=\phi_1(y-)\,.
  \end{aligned}
  \end{equation*}
Since $t_x>t_y$ and because backward characteristics 
cannot cross on $(-\infty,0)\times(0,+\infty)$, it follows that
$\phi_1(x+)= \zeta_{t_x,-}(0)\leq \zeta_{t_y,+}(0)=\phi_1(y-)$,
proving the nondecreasing monotonicity of $\varphi_1$.
This completes the proof that  $\omega\in \mathcal{A}_1(T)$
in the case $(\omega(0-), \omega(0+))\in\mathcal{T}_1$.
\smallskip
 
\noindent
{\bf Case 4.} $(\omega(0-), \omega(0+))\in\mathcal{T}_2$.\\
With entirely similar arguments to {Case 3}, we deduce that $\omega \in \mathcal{A}_2(T)$.
\smallskip

\noindent
{\bf Case 5.} $(\omega(0-), \omega(0+))\in\mathcal{T}_{3,-}\cup \mathcal{T}_{3,+}$.\\
We assume that $\omega(0-) >\theta_l$, $\omega(0+)>\theta_r$. The cases with $\omega(0-) =\theta_l$,
or  $\omega(0+)=\theta_r$, can be treated with entirely similar arguments, relying on the analysis of Case 2.
Let $\xi_{0,-}, \xi_{0,+}$ be the minimal and maximal backward characteristics through $(0,T)$. 
Then we have $\xi_{0,-}(0)=-f'_l(\omega(0-))\cdot T<0<\xi_{0,+}(0)=-f'_r(\omega(0+))\cdot T$.
Since backward characteristics starting at points $(x,T)$
with $x<0$ or $x>0$ cannot cross $\xi_{0,-}$ and $\xi_{0,+}$, respectively,
we deduce that $f'_l(\omega(x\pm))\geq  \frac{x}{T}+f'_l(\omega(0-))$
for all $x<0$ and $f'_r(\omega(x\pm))\leq  \frac{x}{T}+f'_r(\omega(0+))$
for all $x>0$. Thus, setting $L=R=0$, the conditions~\eqref{c71}
are satisfied. Moreover, with the same arguments of Case 1 we deduce that the map  $\varphi_3$
in~\eqref{phi3-def} is nondecreasing. Hence, we have shown that $\omega\in \mathcal{A}_{3}^{AB}(T)$,
and this completes the proof of $\mathcal{A}(T)\subseteq \mathcal{A}_{1}(T)\cup\mathcal{A}_{2}(T)\cup\mathcal{A}_{3}^{AB}(T)$.
\qed

\subsection{Proof of \ ${\bf \mathcal{A}_{1}(T)\cup\mathcal{A}_{2}(T)\cup\mathcal{A}_{3}^{AB}(T)\subseteq
\mathcal{A}(T)}$.}
Given a function $\omega\in \mathcal{A}_{1}(T)\cup\mathcal{A}_{2}(T)\cup\mathcal{A}_{3}^{AB}(T)$,
we will show that there exists an initial datum $\overline{u}\in {\bf L}^{\infty}(\mathbb{R})$ such that $\mathcal{S}^{AB}_{T} \overline{u}=\omega$.
We shall analyze only two cases, the others being entirely similar.

\noindent
{\bf Case 1.} $\omega\in \mathcal{A}_2(T)$.\\
We assume that $\omega(0-) > \pi_{l,-}^r(\omega(0+))$, the case $\omega(0-) =\pi_{l,-}^r(\omega(0+))$
being entirely similar and simpler.
Hence, we have $\pi_{r,-}^l(\omega(0-))>\omega(0+)$.
We will construct the initial datum $\overline u$  with the desired property 
adopting a similar procedure to~\cite{MR1616586}, which consists of the following steps: 
\begin{itemize}
\item[1.] For every $x\neq 0$ we trace the lines $\vartheta_{x,-}$,$\vartheta_{x,+}$ through $(T,x)$ with slope $f'_{l}(\omega(x-))$, $f'_{l}(\omega(x+))$, respectively, if $x< 0$, and $f'_{r}(\omega(x-))$, $f'_{r}(\omega(x+))$, respectively, if $x>0$. 
At $x=0$ we trace the lines $\vartheta_{0,-}$, $\vartheta_{0,+}$  through $(T,0)$ with slope 
$f'_r(\pi_{r,-}^l(\omega(0-)))$, $f'_r(\omega(0+)$, respectively.
Because of~\eqref{c4}, $\vartheta_L^-$ and all lines $\{\vartheta_{x,\pm}: \, x\geq 0\ \,\text{or} \ x<L\}$ 
 reach the $x$-axis without crossing the
line $x=0$ at times $t>0$, while $\vartheta_{L,+}$ and all lines $\{\vartheta_{x,\pm}: \, L<x<0\}$ cross the 
line $x=0$  at a time $t\geq 0$. Then, we redefine $\vartheta_{L,+}$ and  $\{\vartheta_{x,\pm}: \, L<x<0\}$
as polygonal lines that, after crossing $x=0$, continue  with slope 
$f'_r(\pi_{r,-}^l(\omega(L+)))$ and
$f'_r(\pi_{r,-}^l(\omega(x-)))$, $f'_r(\pi_{r,-}^l(\omega(x+))$, respectively.
Since the curves  $\vartheta_{x,\pm}$ are defined so that one has
$\vartheta_{x,\pm}(0)=\varphi_2(x\pm)$ for all $x$,
from the monotonicity of the map $\varphi_2$ in~\eqref{phi2-def} we deduce that $\vartheta_{x,\pm}$
never intersect each other in the region $\mathbb{R}\times (0,T)$.
We will treat the polygonal lines $\vartheta_{x,\pm}$, $x\in\mathbb{R}$, as 
(minimal and maximal) backward characteristics of the $AB$-entropy
solution that we are constructing on $\mathbb{R}\times [0,T]$.

\item[2.] Since the solution is constant along genuine characteristics, 
for every $x\in (-\infty, \vartheta_{L,-}(0))\cup (\vartheta_{0,+}(0), +\infty)$
such that $x=\vartheta_{y,\pm}(0)$ for some $y\in(-\infty, L)\cup (0,+\infty)$, we will set $\overline u(x)=\omega(y\pm)$,
while for for every $x\in  (\vartheta_{L,+}(0), \vartheta_{0,-}(0))$
such that $x=\vartheta_{y,\pm}(0)$ for some $y\in (L,0)$, we will set $\overline u(x)=\pi_{r,-}^l(\omega(y\pm))$.
The set of remaining $x$ is a disjoint union of countably many open intervals, say $(x^-_n, x^+_n)$,
$n\in\mathbb{N}$,
with $x^-_n=\vartheta_{y_n,-}(0), x^+_n=\vartheta_{y_n,+}(0)$, for some $y_n\in\mathbb{R}$, 
where $\overline u$ is defined so to produce a compression wave which generates a discontinuity 
at the point $(y_n,T)$.

\item[3.] According with the definition of $\overline u$ in step 2,  we define a function $u:\mathbb{R}\times [0,T]\rightarrow \mathbb{R}$ which is constant along the lines $\vartheta_{x,\pm}$ that do not cross $x=0$, and it is
piecewise constant along the polygonal lines $\vartheta_{x,\pm}$ that intersect $x=0$,
changing value at $x=0$ so to satisfy the interface entropy condition~\eqref{int-entr-cond3}. Namely, we set $u$
equal to $\omega(y\pm)$ along the line $\vartheta_{y,\pm}(t), t \in [0,T]$, 
when $y\in(-\infty, L)\cup (0,+\infty)$, and along the segment of polygonal $\vartheta_{y,\pm}(t), t \in [\tau_y, T]$, with $\vartheta_{y,\pm}(\tau_y)=0$, when
$y\in (L,0)$. Instead we define $u$ as $\pi_{r,-}^l(\omega(y\pm))$ 
along the segment of polygonal $\vartheta_{y,\pm}(t), t \in [0,\tau_y]$, with $\vartheta_{y,\pm}(\tau_y)=0$,
when $y\in (L,0)$. Finally, for any $x\in (x^-_n, x^+_n)$
we let $u$ to be equal to $\overline u(x)$ on the right of $x=0$
and to be equal to $\pi_{l,-}^r(\overline u(x))$ on the left of $x=0$, along a polygonal line $\eta_x(t) $, $t\in [0,T]$,
which connects $(x,0)$ with $(y_n,T)$.

\item[4.] With the same arguments of~\cite{MR1616586} one can show that the function $u$ constructed in step 3: is locally Lipschitz continuous on $\mathbb{R}\times [0,T]$; 
it is a classical solution of $u_t+f_l(u)_x$ on $(-\infty, 0)\times (0,T)$,
and of $u_t+f_r(u)_x$ on $(0, +\infty)\times (0,T)$;
it is continuous with respect to the the ${\bf L}^1_{loc}$
topology as a function from $[0,T]$ to ${\bf L^\infty}(\mathbb{R})$;  it attains the initial data $\overline u$ 
at time $t=0$ and the terminal profile $\omega$ at time $t=T$. Moreover, $u$
satisfies the interface entropy condition~\eqref{int-entr-cond3} associated to the connection $AB$.
 \end{itemize}
1. For each $x\neq 0, L$, consider the polygonal lines 
\begin{equation}
\vartheta_{x,\pm}(t):=
\begin{cases}
x+f'_l(\omega(x\pm))(t-T) &\text{  if  } \ x<L,\  t\in [0,T],\\
\noalign{\smallskip}
x+f'_l(\omega(x\pm))(t-T) &\text{  if  } \ L<x<0,\  t\in \big[T-{x}/{f'_l(\omega(x\pm))},\,T\big],\\
\noalign{\smallskip}
f'_r(\pi_{r,-}^l(\omega(x\pm)))\big(t-T+{x}/{f'_l(\omega(x\pm))}\big) &\text{  if  } \ L<x<0,\  t\in \big[0,\,T-{x}/{f'_l(\omega(x\pm))}\big],\\
\noalign{\smallskip}
x+f'_r(\omega(x\pm))(t-T) &\text{  if  }\  x>0,\  t\in [0,T],
\end{cases}\label{chamin}
\end{equation}
and, at $x=0$, $x=L$, set
\begin{equation}
\label{chamin2}
\begin{aligned}
\vartheta_{0,-}(t)&:= f'_r (\pi_{r,-}^l(\omega(0-))) (t-T)\qquad \text{if} \qquad t\in [0,T],
\\
\vartheta_{0,+}(t)&:= f'_r ((\omega(0+)) (t-T)\qquad\qquad\, \text{if}\quad\quad\, t\in [0,T],
\\
\vartheta_{L,-}(t)&:= L+f'_l(\omega(L-))(t-T)\qquad \ \, \text{if}\quad\quad\, t\in [0,T],
\\
\vartheta_{L,+}(t)&:=
\begin{cases}
x+f'_l(\omega(L+))(t-T)
 &\text{  if  } \  t\in \big[T-{x}/{f'_l(\omega(L+))},\,T\big],\\
\noalign{\smallskip}
f'_r(\pi_{r,-}^l(\omega(L+)))\big(t-T+{x}/{f'_l(\omega(L+))}\big) &\text{  if  }\  t\in \big[0,\,T-{x}/{f'_l(\omega(L+))}\big].
\end{cases}
\end{aligned}
\end{equation}
Notice that, by definitions~\eqref{pimap-def}, \eqref{phi2-def}, \eqref{psi2-def},
we have $\vartheta_{x,\pm}(0)=\varphi_2(x\pm)$ for all $x$,
and $\vartheta_{x,\pm}(\psi_2(x\pm))=0$ for all $x\in (L,0)$.
Then, relying on~\eqref{c4}, on the nondecreasing monotonicity of $\varphi_2$, 
and on the increasing monotonicity of $\psi_2$, we deduce that the polygonal lines
$\vartheta_{x,\pm}$, $x\in\mathbb{R}$,
never intersect each other in the region $\mathbb{R}\times (0,T)$.\\

\noindent
2. Consider the following partition of $\mathbb{R}$ (see Fig.$\ref{Partizione}$):
\begin{equation}
\label{partition}
\begin{aligned}
\mathcal{I}_{R} &\doteq \big\lbrace x\in \mathbb{R}:\ \ \exists~y<z:\ \vartheta_{y,+}(0)=\vartheta_{z,-}(0)=x  \big\rbrace,\\
\mathcal{I}_C &\doteq \big\lbrace x\in \mathbb{R}: \ \ \nexists~y \in\mathbb{R} :\ \vartheta_{y,-}(0)=x\ \, \text{ or } \,\vartheta_{y,+}(0)=x
 \big\rbrace,\\
 \mathcal{I}_W &\doteq \big\lbrace x\in \mathbb{R}:\ \ \exists !~y:\ \vartheta_{y,-}(0)=x\ \, \text{ or }\, \vartheta_{y,+}(0)=x \big\rbrace.
\end{aligned}
\end{equation}

\vspace{-15pt}
\begin{figure}[!htbp] 
\begin{center}

\begin{tikzpicture}[scale=2,cap=round]

 \def\costhirty{0.8660256}

\tikzstyle{axes}=[]
  \tikzstyle{important line}=[very thick]
  \tikzstyle{information text}=[rounded corners,fill=red!10,inner sep=1ex]

\draw[thick,->] (-2.2,0) -- (2.2,0) node[right] {$x$};
\draw[thick,->] (0,0) -- (0,2) node[right] {$t$};

\draw [domain=-2.2:2.2] plot (\x, {1.7}); 
\draw (2,1.7) node[anchor=south] {$\omega$};
\draw  (0,0) node[anchor=north] {$\mathcal{I}_W$};
\draw [dashed](-0.85,1.7) -- (0,0.65) -- (0.38,0);
\draw [dashed](-1,1.7) -- (0,0.48) -- (0.26,0);
\draw [dashed](-1.15,1.7) -- (0,0.30) -- (0.14,0);
\draw [dashed](-1.3,1.7) -- (0,0.12) -- (0.02,0);
\draw [dashed](-1.45,1.7) -- (-0.1,0);
\draw [dashed](-1.60,1.7) -- (-0.22,0);
\draw [dashed](-1.75,1.7) -- (-0.34,0);
\draw (-0.7,1.7) node[anchor=south] {$y_{n-1}$};
\draw (-0.7,1.7) -- (0,1.1) -- (0.9,0);
\draw [dashed](-0.7,1.7) -- (0,1) -- (0.77,0);
\draw [dashed](-0.7,1.7) -- (0,0.9) -- (0.63,0);
\draw  (0.72,0) node[anchor=north] {$\mathcal{I}_C^{n-1}$};
\draw (-0.7,1.7) -- (0,0.8) -- (0.5,0);
\draw (0.5,1.7) -- (0.9,0);
\draw [dashed](-0.52,1.7) -- (0,1.24) -- (0.9,0);
\draw [dashed](-0.32,1.7) -- (0,1.4) -- (0.9,0);
\draw [dashed](-0.135,1.7) -- (0,1.55) -- (0.9,0);

\draw [dashed](0,1.7) -- (0.9,0);
\draw [dashed](0.15,1.7) -- (0.9,0);
\draw [dashed](0.3,1.7) -- (0.9,0);
\draw [dashed](0.42,1.7) -- (0.9,0);
\draw [dashed](0.64,1.7) -- (0.95,0);
\draw [dashed](0.79,1.7) -- (1,0);
\draw  (1.13,0) node[anchor=north] {$\mathcal{I}_W$};
\draw [dashed](0.94,1.7) -- (1.05,0);
\draw [dashed](1.09,1.7) -- (1.1,0);
\draw [dashed](1.22,1.7) -- (1.15,0);

\draw (1.37,1.7) node[anchor=south] {$y_n$};
\draw (1.37,1.7) -- (1.2,0);
\draw (1.37,1.7) -- (1.6,0);

\draw [dashed](1.37,1.7) -- (1.3,0);
\draw [dashed](1.37,1.7) -- (1.4,0);
\draw [dashed](1.37,1.7) -- (1.5,0);
\draw  (1.4,0) node[anchor=north] {$\mathcal{I}_C^n$};

\end{tikzpicture}
\caption{An example of partition of $\mathbb{R}$ associated to the profile $\omega$}
\label{Partizione}
\end{center}
\end{figure}
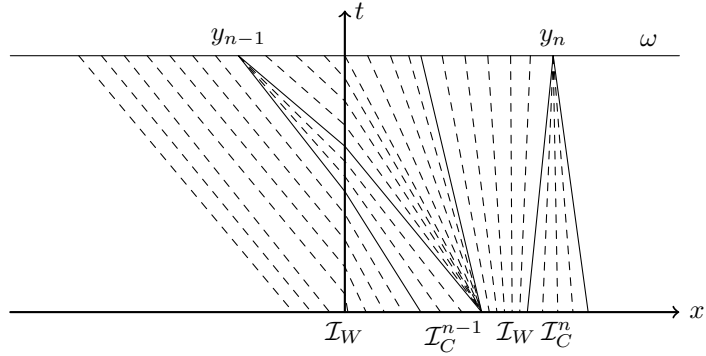

\vspace{-10pt}
\noindent
Some considerations about this partition are useful for the next. The set $\mathcal{I}_R$ consists of the
centres of rarefaction waves originated at time $t=0$, the set $\mathcal{I}_C$ consists of the 
starting points of the compression
waves that generate shocks at time $T$, and $\mathcal{I}_W$ collects the starting points of all other 
waves. With entirely similar arguments to~\cite{MR1616586}, one can verify that:  
\vspace{-5pt}
\begin{itemize}
\item[-] The set $\mathcal{I}_R$ contains at most countably many points;
\item[-] The set $\mathcal{I}_C$ is a disjoint union of at most countably many open intervals 
of the form 
\vspace{-5pt}
\begin{equation}
\begin{aligned}
\mathcal{I}^n&=(x_n^-,x_n^+), \quad x_n^\pm=\vartheta_{y_n,\pm}(0),
\qquad y_n\in (-\infty, L)\cup [0,+\infty),
\\
\mathcal{I}^n_L&=(x_n^-,x_n^+), \quad x_n^\pm=\vartheta_{y_n,\pm}(0),
\qquad y_n\in [L, 0)\,,
\end{aligned}
\end{equation}
\vspace{-13pt}

\noindent
with $y_n$ point of discontinuity of $\omega$.
Notice that, since $\vartheta_{y_n,\pm}(0)=\varphi_2(y_n\pm)$,
by the monotonicity of $\varphi_2$ and $f_l, f'_r$, it follows that $\omega(y_n-)>\omega(y_n+)$
for all $y_n\neq 0$. Moreover, we observed at the beginning that we have $\pi_{r,-}^l(\omega(0-))>\omega(0+)$.
Thus, we will construct compression waves generating a shock connecting the states $\omega(y_n-), \omega(y_n+)$ at $(y_n,T), y_n\neq 0$,
and connecting the states $\pi_{r,-}^l(\omega(0-)), \omega(0+)$ at $(0,T)$.
\end{itemize}
In order to define the initial data in the sets $\mathcal{I}^n_L$, for any $(x_n^-,x_n^+)$
with $x_n^\pm=\vartheta_{y_n,\pm}(0), \ L< y_n<0$, 
setting $\alpha_n^\pm\doteq f'_r(\pi_{r,-}^l(\omega(y_n\pm)))$, consider 
the function
\vspace{-1pt}
\begin{equation}
h_n(x,\alpha)=T-{y_n}/\big[{f'_l\circ \pi_{l,-}^r\circ (f'_r)^{-1}(\alpha)}\big]+{x}/{\alpha}
\qquad\quad x\in (x_n^-,x_n^+),\quad  \alpha\in [\alpha_n^+, \alpha_n^-]\,.
\end{equation}
\vspace{-10pt}

\noindent
Notice that, because of the monotonicity of $f'_r$, $\pi_{r,-}^l$,  and since 
by~\eqref{c21} we have
$\omega(y_n+)<\omega(y_n-)<\theta_l$,
it follows that $\alpha_n^+<\alpha_n^-<0$. Moreover, letting $\tau_n^\pm$ be the times of intersection of 
$\vartheta_{y_n,\pm}$ with $x=0$, i.e., such that $\vartheta_{y_n,\pm}(\tau_n^\pm)=0$, 
we have 
\vspace{-2pt}
\begin{equation}
\tau_n^\pm=T-{y_n}/{f'_l(\omega(y_n^\pm))}=
T-{y_n}/\big[{f'_l\circ \pi_{l,-}^r\circ (f'_r)^{-1}(\alpha_n^\pm)}\big]={-x_n^\pm}/{\alpha_n^\pm}\,.
\end{equation}
Then, by a direct computation one finds that, for any $x\in (x_n^-,x_n^+)$, there holds
\begin{equation}
\begin{aligned}
&\hspace{1.3in}
h_n(x,\alpha_n^+)={(x-x_n^+)}/{\alpha_n^+}>0,\qquad\qquad h_n(x,\alpha_n^-)={(x-x_n^-)}/{\alpha_n^-}<0\,,\\
\noalign{\smallskip}
&\partial_{\alpha}h_n(x,\alpha)=y_n\dfrac{\alpha \cdot f''_l\circ \pi_{l,-}^r\circ (f'_l)^{-1}(\alpha)}{\big[f'_l\circ \pi_{l,-}^r\circ (f'_r)^{-1}(\alpha)\big]^2\cdot \big[f'_l\circ \pi_{l,-}^r\circ (f'_r)^{-1}(\alpha)\big]\cdot 
\big[f''_r\circ (f'_r)^{-1}(\alpha)\big]}-\dfrac{x}{\alpha^2}<0
\qquad\forall~\alpha\in(\alpha_n^+, \alpha_n^-)\,.
\end{aligned}
\end{equation}
Hence, we may define a continuous, decreasing map $\alpha_n: (x_n^-, x_n^+) \to (\alpha_n^+,\alpha_n^-)$ that satisfies
\begin{equation}
\label{crossing-cond-charact}
T-{y_n}/\big[{f'_l\circ \pi_{l,-}^r\circ (f'_r)^{-1}(\alpha_n(x))}\big]=-{x}/{\alpha_n(x)}
\qquad\quad x\in (x_n^-, x_n^+)\,.
\end{equation}
Notice that $\lim_{x\to x_n^{\pm}}\alpha_n(x)=\alpha_n^{\pm}$.
The quantity $\alpha_n(x)$ determines the slope $\lambda_n^+$ on the right of $x=0$ of a polygonal line $\eta_x$,
connecting $(x,0)$ and $(y_n,T)$, with the property that, letting $\lambda_n^-\doteq f'_l\circ \pi_{l,-}^r\circ (f'_r)^{-1}(\alpha_n(x))$
be the slope of $\eta_x$ on the left of $x=0$, there holds
\vspace{-5pt}
\begin{equation}
(f'_l)^{-1}(\lambda_n^-)=
\pi_{l,-}^r\big((f'_r)^{-1}(\lambda_n^+)\big)\,,
\end{equation}
which guarantees that the states $u_l\doteq (f'_l)^{-1}(\lambda_n^-)$,
$u_r\doteq (f'_l)^{-1}(\lambda_n^+)$, satisfy the  interface entropy condition~\eqref{int-entr-cond3}.
In the case $(x_n^-, x_n^+)\subset \mathcal{I}_L^n$ is of the form 
$x_n^\pm=\vartheta_{L,\pm}(0)$, $\vartheta_{L,-}(0)<0<\vartheta_{L,+}(0)$, with the same arguments 
of above we may define a continuous, decreasing function 
$\alpha_n: [0, x_n^+) \to (\alpha_n^+,y_n/T]$ that satisfies
 the equalities in~\eqref{crossing-cond-charact} for all $x\in[0,x_n^+)$,
 and there holds $\alpha_n(0)=y_n/T$.
Then, we define the initial data as
\begin{equation}\label{initial control}
\overline {u}(x):=\begin{cases}
\omega(y\pm)\ \ &\text{ if }\quad x\in \mathcal{I}_W,\ \
x=\vartheta_{y,\pm}(0),\ \  y\in(-\infty, L)\cup (0,+\infty),\\
\noalign{\smallskip}
\pi_{r,-}^l(\omega(0-))\ \ &\text{ if }\quad x\in \mathcal{I}_W,\ \ x=\vartheta_{0,-}(0),\\
\noalign{\smallskip}
\omega(0+)\ \ &\text{ if }\quad x\in \mathcal{I}_W,\ \ x=\vartheta_{0,+}(0),\\
\noalign{\smallskip}
\omega(L-)\ \ &\text{ if }\quad x\in \mathcal{I}_W,\ \ x=\vartheta_{L,-}(0),\\
\noalign{\smallskip}
\pi_{r,-}^l(\omega(L+))\ \ &\text{ if }\quad x\in \mathcal{I}_W,\ \ x=\vartheta_{L,+}(0),\\
\noalign{\smallskip}
\pi_{r,-}^l(\omega(y\pm))\ \ &\text{ if }\quad x\in \mathcal{I}_W,\ \
x=\vartheta_{y,\pm}(0),\ \  y\in(L,0),\\
\noalign{\smallskip}
(f'_l)^{-1}\big({(y_n-x)}/{T}\big)\ \ &\text{ if }\quad x\in (x_n^-,x_n^+)\subseteq \mathcal{I}_C,\ \
x_n^\pm=\vartheta_{y_n,\pm}(0),\ \ y_n<L,
\\
\noalign{\smallskip}
(f'_r)^{-1}\big({(y_n-x)}/{T}\big)\ \ &\text{ if }\quad x\in (x_n^-,x_n^+)\subseteq \mathcal{I}_C,\ \
x_n^\pm=\vartheta_{y_n,\pm}(0),\ \ y_n\geq 0,
\\
\noalign{\smallskip}
(f'_r)^{-1}(\alpha_n(x))\quad &\text{ if }\quad x\in (x_n^-,x_n^+)\subseteq \mathcal{I}_C,\ \
x_n^\pm=\vartheta_{y_n,\pm}(0),\ \ L<y_n< 0,\\
\noalign{\smallskip}
(f'_r)^{-1}(\alpha_n(x))\quad &\text{ if }\quad x\in (x_n^-,x_n^+)\subseteq \mathcal{I}_C,\ \
x_n^\pm=\vartheta_{L,\pm}(0),\ \ \  x\geq 0,\\
\noalign{\smallskip}
(f'_l)^{-1}\big({(y_n-x)}/{T}\big)\ \ &\text{ if }\quad x\in (x_n^-,x_n^+)\subseteq \mathcal{I}_C,\ \
x_n^\pm=\vartheta_{L,\pm}(0),\ \ \  x< 0.
\end{cases}
\end{equation}
Notice that $\overline u$ is not defined on the set $\mathcal{I}_R$ which is of measure zero since 
it is countable. Moreover, we have 
\begin{equation}
\label{inital-data-linf-bound}
|\overline u(x)|\leq  M \doteq
\sup\big\{\!\max\{|\omega(x)|, |\pi_{r,-}^l (\omega (x))|\};\, x\in\mathbb{R}\big\}\,.
\end{equation}

\noindent
3. In order to define the solution $u$ in the region of compression waves, 
for any $x\in\mathcal{I}_C$, consider the polygonal lines 
\vspace{-5pt}
\begin{equation}
\label{chamin3}
\eta_x(t)\!:=\!
\begin{cases}
x+{\big((y_n-x)\, t\big)}/{T}\quad&\text{if}\quad\ x\in (x_n^-,x_n^+)\!\subseteq\! \mathcal{I}_C,\ \
x_n^\pm=\vartheta_{y_n,\pm}(0),\ \ y_n\!<\!L\ \text{or}\ y_n\!\geq\! 0,
\ \ t\in [0,T],\\
\noalign{\smallskip}
x+\alpha_n(x)\, t\quad&\text{if}\quad\ x\in (x_n^-,x_n^+)\!\subseteq\! \mathcal{I}_C,\ \
x_n^\pm=\vartheta_{y_n,\pm}(0),\ \ L\!<y_n\!<0,
\ \ t\in [0,-\frac{x}{\alpha_n(x)}],\\
\noalign{\smallskip}
x\!+\!f'_l\circ \pi_{l,-}^r\!\circ\! (f'_r)^{-1}(\alpha_n(x))\,t\ &\text{if}\quad\ x\in (x_n^-,x_n^+)\!\subseteq \!\mathcal{I}_C,\ \
x_n^\pm\!=\!\vartheta_{y_n,\pm}(0),\ \ L\!<y_n\!<0,
\ \  t\in [-\frac{x}{\alpha_n(x)},T],\\
\noalign{\smallskip}
x+\alpha_n(x)\, t\quad&\text{if}\quad\ x\in (x_n^-,x_n^+)\!\subseteq\! \mathcal{I}_C,\ \
x_n^\pm=\vartheta_{L,\pm}(0),\ \ x\geq 0, 
\ \ t\in [0,-\frac{x}{\alpha_n(x)}],\\
\noalign{\smallskip}
x\!+\!f'_l\circ \pi_{l,-}^r\!\circ\! (f'_r)^{-1}(\alpha_n(x))\,t\ &\text{if}\quad\ x\in (x_n^-,x_n^+)\!\subseteq \!\mathcal{I}_C,\ \
x_n^\pm\!=\!\vartheta_{L,\pm}(0),\ \ x\geq 0, 
\ \  t\in [-\frac{x}{\alpha_n(x)},T],\\
\noalign{\smallskip}
x+{\big((y_n-x)\, t\big)}/{T}\quad&\text{if}\quad\ x\in (x_n^-,x_n^+)\!\subseteq\! \mathcal{I}_C,\ \
x_n^\pm=\vartheta_{L,\pm}(0),\ \ x< 0, 
\ \ t\in [0,T].
\end{cases}
\end{equation}
Observe that, by construction the polygonal lines $\vartheta_{x,\pm}$, $x\in\mathbb{R}$
in~\eqref{chamin}-\eqref{chamin2}, and $\eta_x, x\in \mathcal{I}_C$ in~\eqref{chamin3},
never intersect each other in the region $\mathbb{R}\times (0,T)$
and there holds
\begin{equation}
\forall~(x,t)\in\mathbb{R}\times (0,T) \quad \exists !~y\in \mathbb{R} \qquad \text{s.t.}\quad 
x=\vartheta_{y,-}(t),\quad \text{or}\quad x=\vartheta_{y,+}(t)\quad \text{or}\quad x=\eta_y(t), \ y\in \mathcal{I}_C\,.
\end{equation}
Thus, we may define  on $(\mathbb{R}\setminus\{0\})\times (0,T)$ the function:
\begin{equation}\label{soluzione-costruita}
u(x,t):=
 \begin{cases}
 \omega(y\pm)&\text{ if }\quad \exists~y \in(-\infty,L)\cup(0,+\infty):\ x={\vartheta}_{y,\pm}(t),\\
 \noalign{\smallskip}
  \omega(y\pm)&\text{ if }\quad  \exists~y \in[L,0):\ x=\vartheta_{y,\pm}(t)<0,\\
 \noalign{\smallskip}
  \pi_{r,-}^l(\omega(y\pm))&\text{ if }\quad  \exists~y \in[L,0):\ x=\vartheta_{y,\pm}(t)>0,\\
   \noalign{\smallskip}
  \pi_{r,-}^l(\omega(0-))&\text{ if }\quad  x=\vartheta_{0,-}(t),\\
     \noalign{\smallskip}
  \omega(0+)&\text{ if }\quad  x=\vartheta_{0,+}(t),\\
 \noalign{\smallskip}
  \overline u(y)&\text{ if }\quad \exists~y\in \mathcal{I}_L^n:\ x={\eta}_{y}(t)>0,\\
 \noalign{\smallskip}
   \pi_{l,-}^r(\overline u(y))&\text{ if }\quad \exists~y\in \mathcal{I}_L^n:\ x={\eta}_{y}(t)<0,\\
 \noalign{\smallskip}
  \overline u(y)&\text{ if }\quad \exists~y\in \mathcal{I}^n:\ x={\eta}_{y}(t)\,.
 \end{cases}.
\end{equation}

\noindent
4. 
By construction the function $u$ in~\eqref{soluzione-costruita} is continuous on  $\mathbb{R}\times (0,T)$
and satisfies the  interface entropy condition~\eqref{int-entr-cond3} at $x=0$.
Moreover, with the same type of analysis in~\cite{MR1616586} one can show that there holds
\begin{equation}
    D^-_x u(x,t)\geq\big[{f''_{l,r}(u(x,t))\cdot (t-T)}\big]^{-1}\geq\big[{c\cdot (t-T)}\big]^{-1}\qquad\forall (x,t)\in \mathbb{R}\times (0,T)\,.
\end{equation}
On the other hand, relying on~\eqref{inital-data-linf-bound}, \eqref{soluzione-costruita}, 
and on the assumption {\bf H1)}, 
with the same arguments of the proof
of $\mathcal{A}(T)\subseteq \mathcal{A}_{1}(T)\cup\mathcal{A}_{2}(T)\cup\mathcal{A}_{3}^{AB}(T)$  we derive
\vspace{-5pt}
\begin{equation}
 D^+_x u(x,t)\leq 
 \begin{cases}
  \big[{f''_{l}(u(x,t))\cdot t}\big]^{-1}\leq  \big[{c\cdot t}\big]^{-1}  &\forall~x< \eta_0(t)\,,\\
  \noalign{\medskip}
     \quad\ \dfrac{f'_l(u(x,t))}{f''_l(u(x,t))\cdot x }\leq  {M'}\big[{c\cdot x }\big]^{-1} \qquad &\forall~\eta_0(t)<x<0,
    \\
     \noalign{\medskip}
 \big[{f''_{r}(u(x,t))\cdot t}\big]^{-1} \leq  \big[{c\cdot t}\big]^{-1}&\forall~x>0\,.\\
  \end{cases}
\end{equation}
for some constant $M'>0$. Hence $u$ is locally Lipschitz continuous
and therefore it  is differentiable almost everywhere.
By a direct computation one can check that $u$ is a classical solution of $u_t+f_l(u)_x$ on $(-\infty, 0)\times (0,T)$,
and of $u_t+f_r(u)_x$ on $(0, +\infty)\times (0,T)$. 
Hence, $u$ is an $AB$-entropy solution of~\eqref{eq1},\eqref{flux}. 
Finally, with the same arguments in~\cite{MR1616586}
one verifies the continuity of $t\to u(\cdot, t)$ on $[0,T]$ with respect to the ${\bf L}^1_{loc}$-topology,
and that $u(\cdot, 0)=\overline u,$ $u(\cdot, T)=\omega$, which proves that $\omega=S_T^{AB}\,\overline u
\in \mathcal{A}(T)$.\\
%
%

\noindent
{\bf Case 2.}   $\omega\in \mathcal{A}_{3}^{AB}(T)$, \ $L=0=R$, \ $(\omega(0-),\omega(0+))\in \mathcal{T}_{3,-}$.\\
Since $(\omega(0-),\omega(0+))\in \mathcal{T}_{3,-}$ it follows that $\omega(0-)\geq \theta_l$
and $f_l(\omega(0-))\leq f_r(\omega(0+))$.
We assume that $\omega(0-)>\theta_l$, 
and that $f_l(\omega(0-))<f_r(\omega(0+))$, the cases with  $\omega(0-)=\theta_l$ 
or with $f_l(\omega(0-))= f_r(\omega(0+))$ being entirely similar.
We follow the same procedure of the previous case discussing only the points where 
there is a difference in the construction of the initial data $\overline u$ and of the solution $u$.
\smallskip

\noindent
1. For each $x\neq 0$, consider the lines 
\vspace{-8pt}
\begin{equation}
\vartheta_{x,\pm}(t):=
\begin{cases}
x+f'_l(\omega(x\pm))(t-T) &\text{  if  } \ \ x<0,\  t\in [0,T],\\
\noalign{\smallskip}
x+f'_r(\omega(x\pm))(t-T) &\text{  if  }\  \  x>0,\  t\in [0,T],
\end{cases}
\label{chamin31}
\end{equation}

\vspace{-10pt}
\noindent
and, for $x=0$, set
\begin{equation}
\label{chamin32}
\begin{aligned}
&\vartheta_{0,-}(t):=f'_l(\omega(0-))(t-T), \qquad \quad
\vartheta_{0,+}(t):=
f'_r(\omega(0+))(t-T),\\
&\qquad\qquad\qquad
\vartheta_{0,*}(t):=f'_r(\pi_{r,-}^l(\omega(0-)))(t-T),
\end{aligned}
 \qquad\quad \forall~ t\in [0,T].
\end{equation}

\noindent
2. Then, letting $x_0^\pm\doteq \vartheta_{0,\pm}(0)$, $x_0^*\doteq \vartheta_{0,*}(0)$,
consider the  partition of $\mathbb{R}\setminus\{0\}$:
\vspace{-5pt}
\begin{equation}
\label{partition2}
\begin{aligned}
\mathcal{I}_{R} &\doteq \big\lbrace x\in \mathbb{R}:\ \ \exists~y<z:\ \vartheta_{y,+}(0)=\vartheta_{z,-}(0)=x  \big\rbrace,\\
\mathcal{I}_C &\doteq \big\lbrace x\in \mathbb{R}\setminus [x_0^-, x^*]: \ \ \nexists~y \in\mathbb{R} :\ \vartheta_{y,-}(0)=x\ \, \text{ or } \,\vartheta_{y,+}(0)=x
 \big\rbrace,\\
 \mathcal{I}_W &\doteq \big\lbrace x\in \mathbb{R}:\ \ \exists !~y:\ \vartheta_{y,-}(0)=x\ \, \text{ or }\, \vartheta_{y,+}(0)=x \big\rbrace,\\
  \mathcal{I}_{0,-}&\doteq(x_0^-,0),\qquad \mathcal{I}_{0,+}\doteq (0,x_0^*\,)\,.
\end{aligned}
\end{equation}
Here $ \mathcal{I}_{0,-},  \mathcal{I}_{0,+}$ are intervals where the initial data $\overline u$
will assume the constant value 
$\omega(0-)$ and $\pi_{r,-}^l(\omega(0-))$, respectively, 
while $\mathcal{I}_C$
 is a disjoint union of at most countably many open intervals 
of the form 
\vspace{-5pt}
\begin{equation}
\begin{aligned}
\mathcal{I}^n &=(x_n^-,x_n^+), \quad x_n^\pm=\vartheta_{y_n,\pm}(0),
\qquad y_n\in (-\infty, 0)\cup (0,+\infty),
\\
\mathcal{I}_0&=(x_0^*,x_0^+), \quad x_0^*=\vartheta_{0,*}(0), \qquad 
x_0^+=\vartheta_{0,+}(0),
\end{aligned}
\end{equation}
with $y_n$ point of discontinuity of $\omega$.
Observe that $x_0^*>0$, and that
$\omega(0-)>\theta_l$, $f_l(\omega(0-))<f_r(\omega(0+))$, together imply
$\omega(0+)<\pi_{r,-}^l(\omega(0-))$. Hence, the states $\pi_{r,-}^l(\omega(0-)), \omega(0+)$
are connected by a shock with negative slope for the conservation law $u_t+f_r(u)_x$.
Thus, we will define the initial data $\overline u$ on $\mathcal{I}_0$ so to produce a compression
wave that generates a shock at $(0,T)$. Thus, we define
\vspace{-5pt}
\begin{equation}\label{initial control2}
\overline {u}(x):=\begin{cases}
\omega(y\pm)\ \ &\text{ if }\quad x\in \mathcal{I}_W,\ \
x=\vartheta_{y,\pm}(0),\ \  y\in \mathbb{R},\\
\noalign{\smallskip}
\omega(0-)\ \ &\text{ if }\quad x\in (x_0^-, 0),\ \
x_0^-=\vartheta_{0,-}(0),
\\
\noalign{\smallskip}
\pi_{r,-}^l(\omega(0-))\ \ &\text{ if }\quad x\in (0, x_0^*),\ \
x_0^*=\vartheta_{0,*}(0),
\\
\noalign{\smallskip}
(f'_l)^{-1}\big({(y_n-x)}/{T}\big)\ \ &\text{ if }\quad x\in (x_n^-,x_n^+)\subseteq \mathcal{I}_C,\ \
x_n^\pm=\vartheta_{y_n,\pm}(0),\ \ y_n<0,
\\
\noalign{\smallskip}
(f'_r)^{-1}\big({(y_n-x)}/{T}\big)\ \ &\text{ if }\quad x\in (x_n^-,x_n^+)\subseteq \mathcal{I}_C,\ \
x_n^\pm=\vartheta_{y_n,\pm}(0),\ \ y_n> 0,
\\
\noalign{\smallskip}
(f'_r)^{-1}\big({-x}/{T}\big)\ \ &\text{ if }\quad x\in (x_0^*,x_0^+)\subseteq \mathcal{I}_C,\ \
x_0^*=\vartheta_{0,*}(0),\ \ x_0^+=\vartheta_{0,+}(0).
\end{cases}
\end{equation}

\noindent
3. Then, setting for every $x\in\mathcal{I}_C$ :
\vspace{-5pt}
\begin{equation}
\eta_x(t):=
\begin{cases}
x+{\big((y_n-x)\,t\big)}/{T}
\quad &\text{if}\qquad x\in (x_n^-,x_n^+)\subseteq \mathcal{I}_C,\ \
x_n^\pm=\vartheta_{y_n,\pm}(0),\ \ y_n\neq 0,
\ \ t\in [0,T],\\
\noalign{\smallskip}
{-(x\,t)}/{T}
\quad &\text{if}\qquad x\in (x_0^*,x_0^+)\subseteq \mathcal{I}_C,\ \
x_0^*=\vartheta_{0,*}(0),\ \ x_0^+=\vartheta_{0,+}(0),
\end{cases}
\end{equation}

\vspace{-7pt}
\noindent
we define on $(\mathbb{R}\setminus\{0\})\times (0,T)$ the function:
\vspace{-5pt}
\begin{equation}\label{soluzione-costruita2}
u(x,t):=
 \begin{cases}
 \omega(y\pm)&\text{ if }\quad \exists~y \in\mathbb{R}:\ x={\vartheta}_{y,\pm}(t),\\
 \noalign{\smallskip}
  \omega(0-)&\text{ if }\quad \vartheta_{0,-}(t)<x<0,\\
  \noalign{\smallskip}
  \pi_{r,-}^l(\omega(0-))&\text{ if }\quad 0<x<\vartheta_{0,*}(t),\\
  \noalign{\smallskip}
  \overline u(y)&\text{ if }\quad \exists~y\in \mathcal{I}_C:\ x={\eta}_{y}(t)\,.
 \end{cases}
\end{equation}
\\

\noindent
4. Observe that, since $(\omega(0-),\omega(0+))\in \mathcal{T}_{3,-}$, it follows that 
the pair $u_l(t)=\omega(0-), u_r(t)=\pi_{r,-}^l(\omega(0-))$ satisfies the  interface entropy condition~\eqref{int-entr-cond3}.
Then, with the same arguments of the previous case, we conclude that $u$ is an $AB$-entropy solution  
of~\eqref{eq1},\eqref{flux}-\eqref{datum}, and that $\omega=S_T^{AB}\,\overline u$.
This proves that $\omega\in\mathcal{A}(T)$, and completes the proof of Theorem~\ref{Teo1}.
\qed

\section{Proof of Theorem~\ref{Teo2}}

The proof is devided in three steps.
\smallskip

\noindent
{\bf Step 1.}
Let $\mathcal{U}$ be as in~\eqref{insieme controlli} and let $\mathscr{C}\subset \mathscr{C}_f$ be a compact set of connections. Given $T>0$, $\lbrace\overline{u}_n\rbrace_n\subset\mathcal{U}$, 
and $(A,B)\in \mathscr{C}$, $\{(A_n,B_n)\}_n \subset \mathscr{C}$,
consider the sequences 
\vspace{-5pt}
\begin{equation}
\label{seq-comp}
\big\{\mathcal{S}^{AB}_T \overline u_n\big\}_n,
\qquad
\big\{\mathcal{S}^{A_n B_n}_T \overline u_n\big\}_n,
\qquad
\big\{\mathcal{S}^{AB}_{(\cdot)} \overline u_n\,\big|_{T}\big\}_n,
\qquad
\big\{\mathcal{S}^{A_n B_n}_{(\cdot)} \overline u_n\,\big|_{T}\big\}_n,
\end{equation}

\vspace{-8pt}
\noindent
where $u\big|_T$ denotes the restriction to $\mathbb{R}\times [0,T]$ of a map
defined on $\mathbb{R}\times [0,+\infty)$.
Since, $G$ in~\eqref{insieme controlli} is bounded,  $\mathscr{C}$ is compact 
and because of~\eqref{linf-bound-3} in Remark~\ref{propert3-AB-sol}, there holds
\vspace{-3pt}
 \begin{equation}\label{uniform bound121}
\big\| \mathcal{S}^{AB}_t \overline u_n\big\|_{\bf L^\infty(\mathbb{R})}\leq C,
\qquad
\big\| \mathcal{S}^{A_n B_n}_t \overline u_n\big\|_{\bf L^\infty(\mathbb{R})}\leq C
\qquad \forall~t\geq 0,\ \forall~n\,,
 \end{equation}

\vspace{-5pt}
\noindent for some constant $C>0$. Hence, the first two sequences in~\eqref{seq-comp}
are $weakly^\ast$ relatively compact in ${\bf L^\infty}(\mathbb{R})$,
the latter two are $weakly^\ast$ relatively compact in ${\bf L^\infty}(\mathbb{R}\times [0,T])$.
Thus, we can assume that
\vspace{-5pt}
\begin{equation}
\label{connect-conv}
\overline u_n \  \stackrel{\ast}{\rightharpoonup}\ \overline u\quad \textit{ in }\ \
 {\bf L^\infty}(\mathbb{R})
\,,\qquad\quad
(A_n, B_n) \ \to \ (\,\widetilde A, \widetilde B\,)\,,
\end{equation}

\vspace{-6pt}
\noindent
for some $\overline u\in {\bf L^\infty}(\mathbb{R})$, $(\,\widetilde A, \widetilde B\,)\in \mathscr{C}$, and that
 \begin{align}
 \mathcal{S}^{AB}_T \overline u_n &\  \stackrel{\ast}{\rightharpoonup}\  \omega^{AB},
 \qquad\quad
 \mathcal{S}^{A_n B_n}_T \overline u_n \ \stackrel{\ast}{\rightharpoonup}\ \omega^{\widetilde A \widetilde B}
 \quad \textit{ in }\ \
 {\bf L^\infty}(\mathbb{R}),
 \label{weak star convergence1}\\
 \mathcal{S}^{AB}_{(\cdot)} \overline u_n\,\big|_{T} &\ \stackrel{\ast}{\rightharpoonup}\ u^{AB},
 \qquad\quad
 \mathcal{S}^{A_n B_n}_{(\cdot)}  \overline u_n\,\big|_{T} \ \stackrel{\ast}{\rightharpoonup}\ \, u^{\widetilde A \widetilde B}
 \quad \textit{ in }\ \
 {\bf L^\infty}(\mathbb{R}\times [0,T]),
 \label{weak star convergence2}
 \end{align}
 for some functions $\omega^{AB}, \, \omega^{\widetilde A \widetilde B} \in {\bf L^{\infty}}(\mathbb{R})$ and 
 $u^{AB}, \, u^{\widetilde A \widetilde B}  \in {\bf L^{\infty}}(\mathbb{R}\times [0,T])$. 
 Notice that, since $\overline u_n(x)\in G(x)$ for almost every $x\in\mathbb{R}$, and because 
 $G$ is convex closed valued, by Mazur's lemma
  it follows from~\eqref{connect-conv} that $\overline u\in\mathcal{U}$. 
 We will show that there exist subsequences of~\eqref{seq-comp} that converge in the ${\bf L}^1_{loc}$
 topology to  $\omega^{AB}, \, \omega^{\widetilde A \widetilde B}$, and $u^{AB}, \, u^{\widetilde A \widetilde B}$, respectively, and that 
 \vspace{-5pt}
 \begin{equation}
 \label{compact-cond-1}
 \omega^{AB}=\mathcal{S}^{AB}_T \,\overline u, \qquad  \omega^{\widetilde A \widetilde B}=\mathcal{S}^{\widetilde A \widetilde B}_T \,\overline u\,,\qquad
 u^{AB}=\mathcal{S}^{AB}_{(\cdot)} \,\overline u\,\big|_{T}\,,\qquad
 u^{\widetilde A \widetilde B}=\mathcal{S}^{\widetilde A \widetilde B}_{(\cdot)} \,\overline u\,\big|_{T}\,,
 \end{equation}

\vspace{-4pt}
\noindent
 which proves the compactness of the sets $\mathcal{A}^{AB}\big(T,\mathcal{U}\big)$,
$\mathcal{A}\big(T,\mathcal{U},\mathscr{C}\big)$
and $\mathcal{A}^{AB}\big(\mathcal{U}\big), \, \mathcal{A}\big(\mathcal{U},\mathscr{C}\big)$.\\
 
 \noindent
{\bf Step 2.}
Notice that, by Remark~\ref{main-thm-interpret1}, for any $0<a<b$, 
there exists $C_{a,b}, L_{a,b}>0$ such that,
setting $I_{a,b}\doteq  [-b,-a]\cup [a,b]$, one has
\begin{equation*}
\text{Tot.\!Var.}\big\{
\mathcal{S}^{AB}_t \overline u_n: I_{a,b}\big\}\leq C_{a,b},
\qquad
\text{Tot.\!Var.}\big\{
\mathcal{S}^{A_n B_n}_t \overline u_n: I_{a,b}\big\}\leq C_{a,b},
 \qquad \forall~t\in [a,T],\ \forall~n,
\end{equation*}
 \vspace{-15pt}
\begin{equation*}
\big\|\mathcal{S}^{AB}_t \overline u_n-\mathcal{S}^{AB}_s \overline u_n
\big\|_{\bf L^1(I_{a,b})}\leq L_{a,b}\cdot |t-s|,\qquad\quad
\big\|\mathcal{S}^{A_n B_n}_t \overline u_n-\mathcal{S}^{A_n B_n}_s \overline u_n
\big\|_{\bf L^1(I_{a,b})}\leq L_{a,b}\cdot |t-s|\qquad \forall~t,s\in [a,T],\ \forall~n.
\end{equation*}
By Helly's theorem there exists subsequences $\big\{\mathcal{S}^{AB}_t \overline u_{n_j}\big\}_j$\,,
$\big\{\mathcal{S}^{A_{n_j} B_{n_j}}_t \overline u_{n_j}\big\}_j$\,, 
which converges to some functions $w (\cdot,t)$ and $\widetilde w (\cdot,t)$, respectively,
 in  ${\bf L^1}(I_{a,b})$ for all $t\in[0,T]$. 
 Because of~\eqref{weak star convergence2}, the functions $w, \widetilde w$ must coincide with
 the restriction to $I_{a,b}\times [0,T]$
 of $u^{AB}$
 and $u^{\widetilde A, \widetilde B}$, respectively, and there holds
  \vspace{-5pt}
 \begin{equation*}
 \mathcal{S}^{AB}_t \overline u_{n_j}\ \to \ u^{AB}(\cdot,t)\,,
 \qquad\quad
 \mathcal{S}^{A_{n_j} B_{n_j}}_t \overline u_{n_j}\ \to \ u^{\widetilde A\widetilde B}(\cdot,t)\quad
 \textit{in}\quad {\bf L^1}(I_{a,b})\,, 
 \qquad  \forall~t\in [a,T]\,.
 \end{equation*}
%
 Then, repeating the same arguments for $I_{a_j,b_j}\doteq  [-b_j,-a_j]\cup [a_j,b_j]$, with $a_j\downarrow  0, \,
 b_j\to+\infty$, and observing that by~\eqref{uniform bound121} one has $\|u^{AB}(\cdot,t)\|_{\bf L^\infty(\mathbb{R})}
 \leq C$, $\|u^{\widetilde A \widetilde B}(\cdot,t)\|_{\bf L^\infty(\mathbb{R})}
 \leq C$, for all $t\in [0,T]$, we deduce that
 we can select diagonal subsequences (still denoted with index $j$) such that
 \begin{equation}
 \label{elleuno-conv-compact-21}
 \mathcal{S}^{AB}_t \overline u_{n_j}\ \to \ u^{AB}(\cdot,t)\,,
 \qquad\quad
 \mathcal{S}^{A_{n_j} B_{n_j}}_t \overline u_{n_j}\ \to \ u^{\widetilde A\widetilde B}(\cdot,t)\quad
 \textit{in}\quad {\bf L}^1_{loc}(\mathbb{R})\,, 
 \qquad  \forall~t\in (0,T]\,.
 \end{equation}
 %
 In particular, because of~\eqref{weak star convergence1}, \eqref{elleuno-conv-compact-21},
 we have $u^{AB}(\cdot,T)=\omega^{AB}$, $u^{\widetilde A \widetilde B}(\cdot,T)=\omega^{\widetilde A \widetilde B}$.
 Therefore, in order to establish~\eqref{compact-cond-1}, it remains to show only that
  \vspace{-2pt}
 \begin{equation}
 \label{compact-cond-2}
 u^{AB}=\mathcal{S}^{AB}_{(\cdot)} \,\overline u\,\big|_{T}\,,\qquad
 u^{\widetilde A \widetilde B}=\mathcal{S}^{\widetilde A \widetilde B}_{(\cdot)} \,\overline u\,\big|_{T}\,.
 \end{equation}
 We will provide only a proof of the second equality in~\eqref{compact-cond-2}, the proof of the first one being
 entirely similar.\\
  
 \noindent
{\bf Step 3.}
First observe that, by the regularity of $f_l,f_r$, the convergence~\eqref{elleuno-conv-compact-21} 
implies that 
 \vspace{-5pt}
\begin{equation}
 \label{elleuno-conv-compact-26}
 \begin{aligned}
 f_l\big(\mathcal{S}^{A_{n_j} B_{n_j}}_t \overline u_{n_j}\big)\ \to \ 
 f_l\big(u^{\widetilde A\widetilde B}(\cdot,t)\big)
 \quad &\textit{in}\quad {\bf L}^1_{loc}((-\infty,0])
 \\ 
 f_r\big( \mathcal{S}^{A_{n_j} B_{n_j}}_t \overline u_{n_j}\big)\ \to \ 
 f_r\big(u^{\widetilde A\widetilde B}(\cdot,t)\big)\quad
 &\textit{in}\quad {\bf L}^1_{loc}([0,+\infty)\,,
 \end{aligned} 
 \qquad  \forall~t\in (0,T]\,.
 \end{equation}
 
\vspace{-4pt}
\noindent
Therefore, since $u_{n_j}(\cdot,t)\doteq \mathcal{S}^{A_{n_j} B_{n_j}}_t \overline u_{n_j}$, $t\geq 0$,
are in particular weak solutions of the Cauchy problem for~\eqref{eq1},\eqref{flux}
with initial data $\overline u_{n_j}$, 
relying on~\eqref{elleuno-conv-compact-21}, \eqref{elleuno-conv-compact-26}, 
and on~\eqref{connect-conv}, we find
 \vspace{-5pt}
\begin{equation}
\label{weak-sol-cauchy}
\begin{aligned}
&\int_{-\infty}^{\infty}\int_0^{\infty}
 \Big\{{u^{\widetilde A \widetilde B}\phi_t }+f\big(x,u^{\widetilde A \widetilde B}\big)\phi_x
 \Big\}dxdt
 +\int_{-\infty}^{\infty}{\overline u(x)\phi(x,0)}dx=
 \\
 &\quad =\lim_{j\to\infty} \int_{-\infty}^{\infty}\int_0^{\infty}
 \big\{{u_{n_j} \phi_t }+f\big(x,u_{n_j}\big)\phi_x
 \big\}dxdt
 +\int_{-\infty}^{\infty}{\overline u_{n_j}(x)\phi(x,0)}dx~=~0\,,
 \end{aligned}
\end{equation}

\vspace{-4pt}
\noindent
for any test function $\phi\in\mathcal{C}^1_c$ with compact support 
contained in $\mathbb{R}\times (0,+\infty)$, 
which shows that
$u^{\widetilde A \widetilde B}$ is a weak solution of the Cauchy problem~\eqref{eq1},\eqref{flux}-\eqref{datum}. 
Next, setting $I_l\doteq (-\infty, 0)$, $I_r\doteq (0,+\infty)$, with the same arguments we derive
\vspace{-5pt}
 \begin{align*}
\int_{I_{l,r}}\int_0^{+\infty}
&\Big\{\big|u^{\widetilde A \widetilde B}-k\big|\phi_t +\big(f_{l,r}\big(u^{\widetilde A \widetilde B}\big)-f_{l,r}(k)\big)\sgn{\big(u^{\widetilde A \widetilde B}-k\big)}\phi_x\Big\}dxdt=\\ 
\noalign{\vspace{-5pt}}
&\lim_{j\rightarrow \infty}
\int_{I_{l,r}}\int_0^{+\infty} \Big\{\big|u_{n_j}-k\big|\phi_t +\big(f_{l,r}\big(u_{n_j}\big)-f_{l,r}(k))\sgn \big(u_{n_j}-k\big)\phi_x\Big\} dxdt~\geq~0\,,
\end{align*}

\vspace{-4pt}
\noindent
for any non negative function $\phi\in\mathcal{C}^1$ with compact support in $I_{l,r}\times (0,T]$ and for any $k\in\mathbb{R}$. Therefore, since $u^{\widetilde A \widetilde B}$ is a weak solution
of the Cauchy problem~\eqref{eq1},\eqref{flux}-\eqref{datum}, that satisfies the 
Kru\v{z}hkov entropy inequalities on $(\mathbb{R}\setminus\{0\})\times (0,T]$,
invoking a result in~\cite{MR1771520} (see also~\cite[Coroll. 6.8.4]{MR3468916})
we deduce that the map $t\to u^{\widetilde A \widetilde B}(\cdot,t)$ is continuous from $[0,T]$
in $\mathbf{L}^1_{loc}(\mathbb{R})$, and that the initial condition~\eqref{datum} is satisfied.
This shows that  $u^{\widetilde A \widetilde B}$ satisfies conditions (i)-(ii) of Definition~\ref{def-ab-entr-sol}.

Finally, observing that by definition~\eqref{AB-adapt-entr} 
and because of~\eqref{connect-conv}, there holds
$k_j\doteq k_{A_{n_j}\,B_{n_j}} \to k_{\widetilde A\, \widetilde B}$ in ${\bf L}^1_{loc}(\mathbb{R})$,
we deduce that $u^{\widetilde A \widetilde B}$ satisfies also the 
Kru\v{z}hkov-type entropy inequality associated to the $\widetilde A \widetilde B$-connection.
Namely,   for any non negative function $\phi\in  \mathcal{C}^1$ with compact support in $\mathbb{R}\times (0,T]$, 
we get
\vspace{-5pt}
 \begin{align*}
 \int_{-\infty}^{+\infty}\int_{0}^{\infty}&
 \big\{\big|u^{\widetilde A \widetilde B}-k_{\widetilde A \widetilde B}(x)\big|\phi_t+\big(f\big(x,u^{\widetilde A \widetilde B}\big)-f(x,k_{\widetilde A \widetilde B}(x))\big)\sgn\big(u^{\widetilde A \widetilde B}-k_{\widetilde A \widetilde B}(x)\big)\phi_x
 \big\}dxdt=\\
 \noalign{\vspace{-5pt}}
&\lim_{j\rightarrow \infty}\int\int \big\{\big|u_{n_j}-k_j(x)\big|\phi_t +\big(f\big(u_{n_j}\big)-f\big(k_j(x)\big)\sgn\big(u_{n_j}-k_j(x)\big)\phi_x\big\}dxdt~\geq~0,
 \end{align*}
 
 \vspace{-5pt}
\noindent
which shows that $u^{\widetilde A \widetilde B}$ is an $\widetilde A \widetilde B$-entropy solution
of the Cauchy problem~\eqref{eq1},\eqref{flux}-\eqref{datum} 
on $\mathbb{R}\times [0,T]$, according with definition~\eqref{AB-adapt-entr} .
Thus, by uniqueness of $\widetilde A \widetilde B$-entropy solutions of the Cauchy problem (see Theorem~\ref{semigroup-AB}), we deduce that  
$u^{\widetilde A \widetilde B}=\mathcal{S}^{\widetilde A \widetilde B}_{(\cdot)} \,\overline u\,\big|_{T}$,
completing the proof of Theorem~\ref{Teo2}.
\qed

\section{Some applications in
LWR  traffic flow models}

Starting from the seminal papers by Lighthill, Whitham~\cite{MR72606} and Richards~\cite{MR75522}, 
the evolution of unidirectional traffic flow along an highway can be described 
at a macroscopic level
with a
partial differential equation (LWR model) where the dynamical variable is the traffic density $\rho(x,t)$
(the number of vehicles per unit length). The LWR model expresses the mass conservation,
i.e. the conservation of the total number of vehicles, and postulates that the average traffic speed $v(x,t)$ is
a function of the traffic density alone.
Thus, the mean traffic flow (the number of cars crossing the point $x$ per unit time)
is given by $f(x,t)=\rho(x,t)\,v(\rho(x,t))$, and we are lead to the hyperbolic conservation law
\vspace{-5pt}
\begin{equation}\label{equation for cars} 
\rho_t+ (\rho\, v(\rho))_x=0\,.
\end{equation}
Here 
$\rho(x,t)$ takes values in the interval $[0, \rho_{max}]$, where 
$\rho_{max}$ represents the situation
in which the vehicles are bumper to bumper and thus depends only on the average length of
the vehicles. The velocity $v(\rho)$ has a maximum value $v_{max}$
(representing the limit speed) attained at $\rho=0$, and 
it  is strictly decreasing
since in presence of larger number of cars each driver goes slower. 
Hence, 
the corresponding flux $f(\rho)=\rho \, v(\rho)$ (the so-called {\it fundamental diagram}) 
is a (uniformly) strictly concave map (see Figure~\ref{figure  14}), satisfying the  assumptions 
\vspace{-13pt}
 \begin{enumerate}
\item[{\bf H1)$^\prime$}] $f_l, f_r: \mathbb{R}
\rightarrow \mathbb{R}$ 
are twice continuously differentiable, (uniformly) strictly concave maps
\vspace{-3pt}
$$\max\big\{f''_{l}(u)\,\, f''_r(u)\big\}\leq -c<0\qquad \forall~u\in \mathbb{R},$$
\end{enumerate}

\vspace{-5pt}
\noindent
and {\bf H2)-H3)} in Section~\ref{sec-prelim} (with $\rho_{max}$ in place of $1$).
We refer to~\cite{MR3553143,MR3110698,MR3076426} 
for general references on macroscopic models of traffic flow.
\pagebreak

\vspace{-3pt}
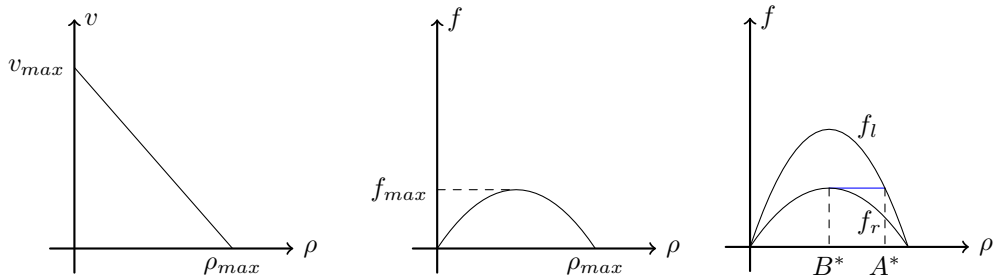
\begin{figure}[!htbp] \label{figure LWR}
\begin{center}
\begin{tikzpicture}
[scale=1.6,cap=round]
 \def\costhirty{0.8660256}

\tikzstyle{axes}=[]
  \tikzstyle{important line}=[very thick]
  \tikzstyle{information text}=[rounded corners,fill=red!10,inner sep=1ex]

\draw[thick,->] (-0.2,0) -- (1.8,0) node[right] {$\rho$};
\draw[thick,->] (0,-0.2) -- (0,1.9) node[right] {$v$};

\draw (0,1.5) node[anchor=east] {$v_{max}$};
\draw (1.3,0) node[anchor=north] {$\rho_{max}$};

\draw (0,1.5) -- (1.3, 0);
\end{tikzpicture}
\quad
\begin{tikzpicture}
[scale=1.6,cap=round]
 \def\costhirty{0.8660256}

\tikzstyle{axes}=[]
  \tikzstyle{important line}=[very thick]
  \tikzstyle{information text}=[rounded corners,fill=red!10,inner sep=1ex]

\draw[thick,->] (-0.2,0) -- (1.8,0) node[right] {$\rho$};
\draw[thick,->] (0,-0.2) -- (0,1.9) node[right] {$f$};

\draw (0, 1125/2308) node[anchor=east] {$f_{max}$};
\draw (1.3,0) node[anchor=north] {$\rho_{max}$};
\draw[dashed] (0, 1125/2308) -- (375/577, 1125/2308);
\draw [black,domain=0:1.3] plot (\x, {- 1.154*pow(\x,2)+1.5*\x });

\end{tikzpicture}
\quad
\begin{tikzpicture}
[scale=1.6,cap=round]
 \def\costhirty{0.8660256}

\tikzstyle{axes}=[]
  \tikzstyle{important line}=[very thick]
  \tikzstyle{information text}=[rounded corners,fill=red!10,inner sep=1ex]

\draw[thick,->] (-0.2,0) -- (1.8,0) node[right] {$\rho$};
\draw[thick,->] (0,-0.2) -- (0,1.9) node[right] {$f$};

\draw (0.8,1) node[anchor=west] {$f_{l}$};
\draw (0.8,0.2) node[anchor=west] {$f_{r}$};
\draw[blue] (0.65, 1125/2308) -- (1.109, 1125/2308);
\draw[dashed]  (1.109, 1125/2308) --  (1.109, 0);
\draw[dashed]  (0.65, 1125/2308) --  (0.65, 0);

\draw (0.65, 0) node[anchor=north] {$B^\ast$};
\draw (1.109, 0) node[anchor=north] {$A^\ast$};

\draw [black,domain=0:1.3] plot (\x, {- 1.154*pow(\x,2)+1.5*\x });
\draw [black,domain=0:1.3] plot (\x, {- 1.154*2*pow(\x,2)+1.5*2*\x });
\end{tikzpicture}

\end{center}
\vspace{-10pt}
\caption{Velocity and flux in the LWR model, and a discontinuous flux with critical connection
\label{figure  14}} 
\end{figure}
%
%
The occurrence of  special events (like heavy rain) 
that alter the road condition, or the presence along the
road of obstacles
(such as speed bumps, construction sites)
that hinder the traffic flow,
may force the vehicles to slow down or speed up in different
sections of the highway.
These inhomogeneities of the road are described
by considering different speed-density relationships (and therefore different  fundamental diagrams) 
on different portions of  the highway. Assuming for simplicity that the change in the flow-density relation 
in two sections of the road of infinite length occurs at
$x=0$, we are led to a conservation law with discontinuous flux
$f(x,\rho)$ as in~\eqref{flux},
 where the right and left fluxes are 
 of the form $f_{l,r}(\rho)=\rho\,v_{l,r}(\rho)$. 
This model was considered in~\cite{MR898784} where it was employed 
an admissibility criterion for the one-sided limits of the solution at $x=0$
according with the flux maximization principle.
Such a criterion
is equivalent to an interface entropy condition
as in~\eqref{int-entr-cond3}
relative to a critical connection $(A^*,B^*)$
 passing trough the minimum of the two points of maximum of $f_l, f_r$
 (see Figure~\ref{figure  14}).
 Here, since we are considering a two-flux concave flux, we replace in
 the $AB$-interface entropy condition~\eqref{int-entr-cond3}
 the $\leq$ signs with the $\geq$ signs and viceversa. This implies that the 
 flux of an $AB$-entropy solution along the discontinuity $\{x=0\}$ must be smaller or equal to the 
 value of the flux on the connection.
 We let ${{\mathcal{S}}}^{*}$ denote the solution operator 
for~\eqref{eq1}, \eqref{flux} with  fluxes $f_{l,r}(\rho)=\rho\,v_{l,r}(\rho)$,
$\rho\in [0, \rho_{max}]$,
and connection $(A^*, B^*)$.
Since our analysis will be focused on a finite section of the road,
we shall assume that all the initial data have support in a bounded set $K\subset \mathbb{R}$.
One can derive similar characterization of the attainable set provided by Theorem~\ref{Teo1} in the two-flux concave case.
Thus, the results stated in  Theorem~\ref{Teo2}
and Corollary~\ref{existence-opt-sol}
continue to hold as well in the concave-concave case.

In this setting we shall consider two type of optimization problems. In the first one 
we treat as control parameters only the initial data.
Instead, in the latter we regard as control parameters also the connection states whose
flux value provides an upper  limitation on the flux of the solution
at $x=0$. 
Such a control can be viewed as a {\it local point constraint control}  acting at $x=0$ (cfr.~\cite{MR2817547}).
Similar problems in the context of a junction are treated in~\cite{traffic-index}.
\smallskip

 {\bf Output least square optimization with traffic density target.}
 In order to validate the LWR models employed by
transport engineers, it is fundamental to compare the experimental data with 
the solutions that better approximate a given observable function. 
A classical cost functional adopted to this pourpose is the ${\bf L^2}$-distance from
an observation output (see for instance~\cite{MR1680830}).
Thus, we are led to consider the 
optimization problem 
\vspace{-5pt}
 \begin{equation}
 \label{output-opt-traffic}
\min_{\overline \rho\in\mathcal{U}}\int_{\mathbb{R}}|S^*_T \overline\rho (x)-l(x)|^2dx,
\end{equation} 

\vspace{-8pt}
\noindent
where $\mathcal{U}$ is the admissible control set 
\vspace{-3pt}
 \begin{equation}
 \label{indata-oil-reservoir}
 \mathcal{U}\doteq \Big\{
 \overline u\in \mathbf{L}^\infty(\mathbb{R});\
 \overline u(x)\in G(x)\ \ \text{for a.e.}~~x\in\mathbb{R}
 \Big\},
 \end{equation}
 with
 \vspace{-5pt}
 \begin{equation}
 \label{G-def}
 G(x)=
 \begin{cases}
 [0,\rho_{max}]\quad &\text{if}\quad x\in K\,,
 \\
 \{0\} &\text{otherwise},
 \end{cases}
 \end{equation}
and $ l\in L^{2}(\mathbb{R})$ is a given target function.
Notice that, by Remark~\ref{propert3-AB-sol}, there will be some bounded set $K'\subset \mathbb{R}$
such that
\vspace{-5pt}
\begin{equation}
\label{Omega-def}
{\mathcal{S}}^{*}_t \,\overline s\in
\Omega\doteq
\Big\{
 \omega\in \mathbf{L}^\infty(\mathbb{R}; [0,c]);\
 \text{supp}(\omega) \subset K'
 \Big\}
 \qquad\forall~\overline s \in \mathcal{U}, \ t\geq 0\,.
\end{equation}
Therefore, since the map $\omega \mapsto \int_{\mathbb{R}} |\omega(x)-l(x)|^2dx$
is clearly continuous on $\Omega$ with respect to the $\mathbf{L}^1(\mathbb{R})$
topology, we deduce 
the existence of a solution 
to problem~\eqref{output-opt-traffic}
from the natural extension of Corollary~\ref{existence-opt-sol} to the two-flux concave case.

Alternatively, in order to address road safety issues in planning design, 
 it is important to analyse the initial density distributions and the (upper) flow limitations 
at the flux discontinuity interface which lead to the closest configuration to a desired density distribution.
For example, one may consider two stretches of road of different capacities
connected at a junction located in front of a school, where one may 
regulate the maximum rate at which the vehicles pass through the junction.
In this case, it would be interesting to analyze the solutions of the optimization problem
\vspace{-5pt}
 \begin{equation}
 \label{output-opt-traffic-2}
\min_{\overline \rho\in\mathcal{U},\,(A,B)\in\mathscr{C}}\int_{\mathbb{R}}|S^{AB}_T \overline\rho (x)-l(x)|^2dx,
\end{equation} 
where $T$ is the exit time from school,  $l\in L^{2}(\mathbb{R})$ represents a ``safe'' traffic distribution,
$\mathcal{U}$ is the set of admissible initial data as above, and $\mathscr{C}$ is a compact set of connections.
Again, relying on 
the analogous result of Corollary~\ref{existence-opt-sol} for the two-flux concave case, 
we deduce the existence of a solution 
to~\eqref{output-opt-traffic-2}.
\\

 {\bf Fuel consumption optimization.}
 Traffic simulation is  a fundamental instrument to predict the impact of road design 
 and to examine the performance of traffic facilities under changing surface conditions. 
In this context, a major challenge for transport planners is to design solutions for mitigating pollution,
which has huge economic impact, beside affecting people's quality of life.
Various definitions to quantify the overall fuel consumption have been introduced
in the literature (see~\cite{tk}).
We employ here the definition proposed in~\cite{seibold}
where the fuel consumption rate of a single vehicle 
is expressed by a 
polynomial function $P$ depending only on the average traffic speed $v(\rho)$.
The overall fuel consumption rate is then obtained multiplying $P$ by the density $\rho$.
Thus, if we consider two stretches of road of different capacities
connected at a junction where we may regulate the maximum flow rate of traffic, and we are interested in analyzing the initial density distribution that produces the minimum fuel consumption in a given interval of time $[0,T]$,
we are led to the optimisation problem
\vspace{-5pt}
\begin{equation}\label{funzionale 2}
\min_{\overline \rho\in\mathcal{U},\,(A,B)\in\mathscr{C}}\int_0^T\int_{\mathbb{R}}  S_t^{AB} \overline\rho(x) P\big(v( S_t^{AB} \overline\rho(x))\big)dx dt\,,
\end{equation}
with $\mathcal{U}$ and $\mathscr{C}$ as above.
Observe that, by Remark~\ref{propert3-AB-sol}, there will be some bounded set $K'\subset \mathbb{R}$
such that
\begin{equation}
\label{Omega-def2}
{\mathcal{S}}^{AB}_{(\cdot)} \,\overline \rho\in
\Omega\doteq
\Big\{
 \omega\in \mathbf{L}^\infty(\mathbb{R}\times[0,T]; [0,\rho_{max}]);\
 \text{supp}(\omega) \subset K'
 \Big\}
 \qquad\forall~\overline \rho \in \mathcal{U}, \ (A,B)\in \mathscr{C}\,.
\end{equation}
Hence, since the map $\omega \mapsto \int_{\mathbb{T}\times [0,T]}
\omega(x,t) P\big(v(\omega(x,t))\big)dx dt$ is  
 continuous on $\Omega$ with respect to the $\mathbf{L}^1(\mathbb{R}\times [0,T])$
topology, we deduce the existence of a solution to~\eqref{funzionale 2}
from the analogous result of Corollary 2.3 for the two-flux concave case.

\bibliographystyle{siam}
\bibliography{final_reference}

\end{document}